\theoremstyle{plain}
\newtheorem{lemma}{Lemma}
\newtheorem{prop}{Proposition}
\newtheorem{theorem}{Theorem}
\newtheorem{corollary}{Corollary}
\theoremstyle{definition}
\newtheorem{definition}{Definition}
\newtheorem{example}{Example}
\newcommand{\Z}{\mathbb{Z}}
\newcommand{\R}{\mathbb{R}}
\newcommand{\Q}{\mathbb{Q}}
\DeclareMathOperator{\vol}{vol}
\DeclareMathOperator{\Gal}{Gal}
\begin{document}
\begin{frontmatter}

\title{Well-Rounded Twists of Ideal Lattices from Real Quadratic Fields}

\author[td]{Mohamed Taoufiq Damir\fnref{funding}}
\address[td]{Department of Mathematics and Systems Analysis, Aalto University, P.O.\ Box 11100, FI-00076 Aalto, Finland}
\ead{mohamed.damir@aalto.fi}
\fntext[funding]{Mohamed Taoufiq Damir is supported in part by the Academy of Finland, through grants \#276031, \#282938, and \#303819 awarded to C.\ Hollanti. }
\author[dk]{David Karpuk\corref{ca}}
\address[dk]{Departamento de Matem\'aticas, Universidad de los Andes, Carrera 1 \#18A-10, Edificio H, Bogot\'a, Colombia}
\ead{da.karpuk@uniandes.edu.co}
\cortext[ca]{Corresponding author}

\begin{abstract}
We study ideal lattices in $\R^2$ coming from real quadratic fields, and give an explicit method for computing all well-rounded twists of any such ideal lattice.  We apply this to ideal lattices coming from Markoff numbers to construct infinite families of non-equivalent planar lattices with good sphere-packing radius and good minimum product distance.  We also provide a complete classification of all real quadratic fields such that the orthogonal lattice is the only well-rounded twist of the lattice corresponding to the ring of integers.  

\end{abstract}

\begin{keyword}

Lattices \sep well-rounded lattices \sep ideal lattices \sep real quadratic fields \sep Markoff numbers

\end{keyword}

\end{frontmatter}

\section{Introduction and Background}

\subsection{Introduction}

Lattices play a central role in many areas of mathematics, with deep and extensive connections to number theory, algebraic coding theory, finite group theory, and Lie group theory.  Codebooks constructed from lattices are used in communication over channels with additive noise \cite[Chapter III]{conway_sloane}, and more recently, lattices arising from totally real number fields have found applications in communication over wireless channels \cite{oggier_belfiore}.

The utility of a given lattice for a given application is measured using some relevant invariant, such as the sphere-packing radius, the normalized second moment, etc.  In communications, large sphere-packing radius $\rho(\Lambda)$ is desirable to protect against additive Gaussian noise, while large \emph{minimum product distance} $N(\Lambda)$ (defined in the following subsection) is desirable to protect against fading, an effect of certain wireless channels analogous to an erasure in traditional coding theory.  Constructing lattice codebooks resistant to both noise and fading requires lattices for which $\rho(\Lambda)$ and $N(\Lambda)$ are large.

\emph{Ideal lattices}, those arising from the canonical embedding of an ideal in the ring of integers of a totally real number field, are usually the prime candidates for constructing lattices with good minimum product distance.  In \cite{oggier_belfiore}, the authors construct orthogonal lattices with good minimum product distance using ideal lattices, and in \cite{good_lattices,eva_suarez}, the authors realize several famous lattices, such as $D_4$, $E_8$, and the Leech lattice $\Lambda_{24}$ as ideal lattices. These constructions also involve \emph{twists} of ideal lattices, where a twist of a lattice $\Lambda$ is a lattice $T\Lambda$ where $T$ is a diagonal matrix with positive entries and determinant one.  Twisting a lattice leaves $N(\Lambda)$ unchanged but gives one the opportunity to increase $\rho(\Lambda)$.

While the constructions mentioned in the previous paragraph are indeed useful, they leave several interesting questions unanswered.  For example, given a dimension $n$, can we explicitly construct an infinite family of non-equivalent lattices $\Lambda$ such that $\rho(\Lambda)$ and $N(\Lambda)$ are uniformly bounded below by positive constants, i.e., are both large?  How do we explicitly compute all twists of an ideal lattice which have large sphere-packing radius?  We will answer these and related questions for ideal lattices coming from real quadratic fields.

To paint our approach in broad strokes we describe the problem geometrically following McMullen \cite{mcmullen}; see also the course notes \cite{mcmullen_notes}.  Let $\mathcal{S}_n$ be the space of all lattices $\Lambda\subset\R^n$ up to similarity.  Inside of $\mathcal{S}_n$ we have the \emph{well-rounded locus} $\mathcal{W}_n$ consisting of all well-rounded lattices.  For example, $\mathcal{S}_2$ can be identified with the fundamental domain of $SL_2(\Z)$ acting on the upper-half plane, and $\mathcal{W}_2$ is the ``bottom arc'' of $\mathcal{S}_2$.  For our purposes, well-rounded lattices will serve as an accessible class of lattices with decent sphere-packings; indeed, a classical theorem of Voronoi \cite{Vor1,Vor2} asserts that every local maximum of the sphere-packing density is a perfect and eutactic lattice, and all perfect lattices are well-rounded.   

Let $\mathcal{A}_n$ be the \emph{diagonal group} of all diagonal matrices with positive entries and determinant one and consider the orbit $\gamma(\Lambda) = \mathcal{A}_n\cdot \Lambda$ as a submanifold of $\mathcal{S}_n$.  The intersection $w(\Lambda) = \gamma(\Lambda)\cap\mathcal{W}_n$ is the set of all well-rounded twists of $\Lambda$, and is our main object of study.  Results of McMullen \cite{mcmullen} assure us that this is not a fool's errand: if $\Lambda$ is an ideal lattice from a totally real number field, then the orbit $\gamma(\Lambda)$ is compact, and for any lattice such that $\gamma(\Lambda)$ is compact the intersection $w(\Lambda)$ is non-empty.  It is worth mentioning a recent result of Levin et al \cite{closed_orbits}, which generalizes this statement to show that $w(\Lambda)$ is non-empty for any $\Lambda$ such that $\gamma(\Lambda)$ is closed.  Thus well-rounded twists of ideal lattices exist, but as far as the authors are aware, there is no general method for computing them even for $n = 2$. 

In \cite{lenny1,lenny2}, the authors make a detailed study of which ideal lattices are well-rounded, focusing especially on the case of lattices in $\R^2$.  Among other things, they show that a positive proportion of real quadratic fields $K$ contain an ideal in $\mathcal{O}_K$ whose ideal lattice is well-rounded.  However, a number of interesting ideal lattices are not well-rounded, for example the ideal coming from the ring of integers in $\Q(\sqrt{5})$.  To make them well-rounded, one considers twists.

The current paper has two main goals.  First, given an ideal lattice $\Lambda\subset\R^2$ coming from a real quadratic field, we wish to compute the set $w(\Lambda)$ of well-rounded twists explicitly.  Second, we wish to apply this explicit computation to infinite families of ideal lattices with large $N(\Lambda)$, to construct infinite families of well-rounded twists of ideal lattices which have large $\rho(\Lambda)$ and $N(\Lambda)$.  These infinite families will come from Markoff numbers.  We also arrive at a number of interesting secondary results, such as a complete classification of all rings of integers $\mathcal{O}_K$ in real quadratic fields such that the only well-rounded twist of the corresponding lattice is orthogonal, an upper bound on the number of well-rounded twists of an ideal lattice, and a proof that the ideal lattice in $\R^2$ with maximum $N(\Lambda)$ is the one coming from the ring of integers of $\Q(\sqrt{5})$.

\subsection{Lattices}

A \emph{lattice} $\Lambda$ is a discrete subgroup of $\R^n$ of rank $n$.  Equivalently, $\Lambda$ is the $\Z$-span of a set $B = \{v_1,\ldots,v_n\}$ of linearly independent vectors in $\R^n$, called a \emph{basis} of $\Lambda$.  One can often define $\Lambda$ as $\Lambda = M_\Lambda\cdot \Z^n$ for a \emph{generator matrix} $M_\Lambda\in GL_n(\R)$ of $\Lambda$, whose columns are the basis vectors in $B$.  We define $\vol(\Lambda) = |\det(M_\Lambda)|$.

Given $x\in \R^n$ we define $N(x) = |x_1\cdots x_n|$.  The \emph{sphere-packing radius} $\rho(\Lambda)$ and \emph{minimum product distance} $N(\Lambda)$ of a lattice $\Lambda\subset\R^n$ are defined to be
\begin{equation}
\rho(\Lambda) := \frac{1}{2}\min_{0\neq x\in \Lambda}||x|| \quad\text{and}\quad N(\Lambda):= \min_{0\neq x\in \Lambda}N(x)
\end{equation}
Loosely speaking, we are interested in constructing infinite families of lattices $\Lambda$ such that both $\rho(\Lambda)$ and $N(\Lambda)$ are large.  To normalize things properly, we scale $\Lambda$ by a positive constant so that $\vol(\Lambda) = 1$ when computing $\rho(\Lambda)$ or $N(\Lambda)$.

\subsection{Moduli Spaces and Twists of Lattices}

Scaling a lattice $\Lambda$ by a positive constant and negating a basis vector if necessary we may assume $M_\Lambda\in SL_n(\R)$.  A change of basis corresponds to right multiplication on $M_\Lambda$ by an element of $SL_n(\Z)$, hence the space of all lattices in $\R^n$ is
\begin{equation}
\mathcal{L}_n := SL_n(\R)/SL_n(\Z).
\end{equation}
For $\Lambda\in \mathcal{L}_n$ its \emph{similarity class} is the orbit $[\Lambda]:= SO_n(\R)\cdot \Lambda\subset \mathcal{L}_n$.  The space of all lattices up to similarity is
\begin{equation}
\mathcal{S}_n := SO_n(\R)\backslash SL_n(\R)/SL_n(\Z).
\end{equation}
Note that $\rho(\Lambda)$ is a well-defined function $\mathcal{S}_n\rightarrow \R_{>0}$.  

The \emph{diagonal group} $\mathcal{A}_n\subset SL_n(\R)$ is defined to be
\begin{equation}
\mathcal{A}_n = \left\{\begin{bmatrix}
\alpha_1 & &  \\
& \ddots &  \\
& & \alpha_n
\end{bmatrix}:\ \alpha_i>0\quad \text{and}\quad \prod_{i = 1}^n \alpha_i = 1\right\}
\end{equation}
Given any $\Lambda\in \mathcal{L}_n$ the orbit $\mathcal{A}_n\cdot \Lambda$ is a submanifold of $\mathcal{L}_n$.  If $T\in \mathcal{A}_n$ we will call $T\cdot \Lambda$ a \emph{twist} of $\Lambda$.  Since $SO_n(\R)\cap \mathcal{A}_n=\{1\}$, the orbit $\mathcal{A}_n\cdot \Lambda$ also defines a submanifold of $\mathcal{S}_n$:
\begin{equation}
\gamma(\Lambda) = [\mathcal{A}_n\cdot\Lambda] = \{[T\cdot\Lambda]\in \mathcal{S}_n: T\in \mathcal{A}_n\}
\end{equation}
We are interested in understanding the intersection of $\gamma(\Lambda)$ with the \emph{well-rounded locus}, and computing the points of intersection explicitly.  

\subsection{Well-Rounded Lattices}

We now recall some basic facts about well-rounded lattices, especially those in $\R^2$.  For a general reference we recommend the book \cite{martinet_book}, as well as the articles \cite{lenny1,lenny2}.  Given a lattice $\Lambda$, define its set of minimal vectors to be
\begin{equation}
S(\Lambda) = \{x\in \Lambda: ||x|| = 2\rho(\Lambda)\}.
\end{equation}
A lattice $\Lambda\subset\R^n$ is \emph{well-rounded} if $\text{span}_\R(S(\Lambda)) = \R^n.$ As this property is invariant under the action of $SO_n(\R)$, the set of similarity classes of well-rounded lattices defines a sub-manifold of $\mathcal{S}_n$, which we denote by $\mathcal{W}_n$:
\begin{equation}
\mathcal{W}_n := \{[\Lambda]\in \mathcal{S}_n: \Lambda \text{ is well-rounded}\} \subset \mathcal{S}_n
\end{equation}
We call $\mathcal{W}_n$ the \emph{well-rounded locus}.

For a lattice $\Lambda\subset\R^2$ with basis $B = \{x,y\}$, we let $\theta_B\in (0,\pi)$ be the angle between $x$ and $y$.  It is not hard to show that $\Lambda$ is well-rounded if and only if it has a basis $B$ such that 
\begin{itemize}
\item[(i)] $||x||^2 = ||y||^2$, and
\item[(ii)] $\left|\cos\theta_B\right| = \left|\frac{\langle x, y\rangle }{||x||\cdot ||y||}\right|\leq1/2$, or equivalently $\theta_B\in [\pi/3,2\pi/3]$
\end{itemize}
We call a basis $B$ satisfying (i) and (ii) a \emph{minimal basis}, and the corresponding $\theta_B$ the $\emph{minimal angle}$ of $\Lambda$.  If $\Lambda$ and $\Lambda'$ are two well-rounded lattices with minimal bases $B$ and $B'$, then $[\Lambda] = [\Lambda']$ in $\mathcal{W}_2$ if and only if $|\cos\theta_B| = |\cos\theta_{B'}|$.  The sphere-packing radius of a well-rounded lattice $\Lambda$ is completely determined by the value of $|\cos\theta_B|$:
\begin{equation}
\rho(\Lambda) = \frac{1/2}{(1-|\cos\theta_B|^2)^{1/4}},\quad \vol(\Lambda) = 1.
\end{equation}
As such, we will use $|\cos\theta_B|$ as a proxy for $\rho(\Lambda)$ in comparing the sphere-packing radii of well-rounded lattices in $\R^2$.  If $\Lambda\subset\R^2$ is well-rounded and $\cos\theta_B = 0$ then $\Lambda$ is \emph{orthogonal}, and if $|\cos\theta_B| = 1/2$ then $\Lambda$ is \emph{hexagonal}.  



\subsection{Number Fields and Ideal Lattices}

Throughout, we let $K$ be a real quadratic field.  We adopt the following standard notation: $\mathcal{O}_K$ is the ring of integers of $K$, $\mathcal{I}$ is an ideal of $\mathcal{O}_K$, $\Delta_K$ is the discriminant of $K$ over $\Q$, $N:K\rightarrow \Q$ and $Tr:K\rightarrow \Q$ are the norm and trace maps, respectively, $N(\mathcal{I}) = |\mathcal{O}_K/\mathcal{I}|$ is the norm of an ideal $\mathcal{I}$, and $\Gal(K/\Q)$ is the Galois group of $K/\Q$ with non-trivial element $\sigma$.  For any $x\in K$, we let $\bar{x} = \sigma(x)$ denote its Galois conjugate.  If $K = \Q(\sqrt{D})$ for square-free $D>0$, then
\begin{equation}
\omega = \left\{\begin{array}{cl}
\frac{1+\sqrt{D}}{2} & D\equiv 1 \text{ mod $4$} \\
\sqrt{D} & D\not\equiv 1 \text{ mod $4$}
\end{array}\right.
\end{equation}
so that $\mathcal{O}_K = \Z[\omega]$.  We refer to $\{1,\omega\}$ as the \emph{canonical basis} of $\mathcal{O}_K$.  More generally, if $\mathcal{I}$ is any non-zero ideal of $\mathcal{O}_K$, it has a \emph{canonical basis} of the form
\begin{equation}\label{ideal_can_basis}
\{a, b + d\omega\},\ a,b,d\in \Z_{\geq 0} \text{ such that } b<a,\ d|a,\ d|b,\ \text{and } ad|N(b+d\omega).
\end{equation}

We are mostly interested in ideal lattices.  Let $\{\tau_1,\tau_2\}$ be the embeddings of $K$ into $\R$, and let $\psi:K\hookrightarrow \R^2$ be the canonical embedding, given by $\psi(x) = (\tau_1(x),\tau_2(x))$.  We generally identify $K$ with a subfield of $\R$ in a standard way, in which case $\tau_1 = \text{id}$ and $\tau_2 = \sigma$, so for example $\psi(\sqrt{D}) = (\sqrt{D},-\sqrt{D})$.  For an ideal $\mathcal{I}\subseteq\mathcal{O}_K$, define the \emph{ideal lattice} $\Lambda_\mathcal{I} = \psi(\mathcal{I})$, which has $\vol(\Lambda_{\mathcal{I}}) = N(\mathcal{I})\sqrt{\Delta_K}$, see \cite[Proposition 2.1]{eva_garden}.  If $\mathcal{I} = \mathcal{O}_K$, we define $\Lambda_K = \Lambda_{\mathcal{O}_K}$.  More generally, one can consider ideals in non-maximal orders of $K$, but for the sake of simplicity, and because we could not improve on our constructions by considering non-maximal orders, we choose to work with ideals in $\mathcal{O}_K$.

\subsection{Summary of Main Results}

We now summarize in more detail the structure and main results of the current paper.  In Section \ref{wr_twists_any_lattice}, we define the notion of a basis $B$ of a lattice $\Lambda\subset\R^2$ to be \emph{good for twisting}, by which we mean there exists a twisting matrix $T_\alpha$ so that $T_\alpha \Lambda$ is well-rounded and $T_\alpha B$ is a minimal basis of this well-rounded twist.  Theorem \ref{best_wr_cond} provides necessary and sufficient conditions for a basis $B$ to be good for twisting, and gives a natural bijection between the set $w(\Lambda)$ and bases $B$ which are good for twisting modulo a certain equivalence relation.

In Section \ref{wr_twists_ideal_lattices}, we study ideal lattices coming from real quadratic fields, and applying Theorem \ref{best_wr_cond} we arrive at Theorem \ref{norm_cond}, a necessary and sufficient condition for a basis $\{x,y\}$ of an ideal $\mathcal{I}$ to be good for twisting, in terms of $N(x)$ and $N(y)$.  As a corollary of Theorem \ref{norm_cond}, we show that if $\{x,y\}$ is a good basis then $|N(x)|$ and $|N(y)|$ are bounded by $N(\mathcal{I})\sqrt{\Delta_K/3}$.

In Section \ref{computing_good_bases}, we focus on explicit computation of all well-rounded twists of $\Lambda_\mathcal{I}$.  The main result is Theorem \ref{thm2}, which shows that given $x\in\mathcal{I}$, there exist at most two bases containing $x$ which are good for twisting, up to equivalence.  The proof of this theorem also yields an explicit algorithm for computing these bases, and therefore for computing the set $w(\Lambda_\mathcal{I})$.  We also obtain an upper bound on the number $|w(\Lambda_{\mathcal{I}})|$ of well-rounded twists of $\Lambda_\mathcal{I}$.

In Section \ref{ortho_family}, we digress slightly to study an infinite family of lattices $\Lambda_K$ such that the only well-rounded twist of $\Lambda_K$ is the orthogonal lattice.  Theorem \ref{big_theorem} shows this condition is equivalent to a lower bound on the regulator $R_K$ being met with equality.

Lastly, in Section \ref{markoff_lattices}, we apply recent results of A.\ Srinivasan \cite{srinivasan} to show that among all ideal lattices from real quadratic fields, the one with maximal $N(\Lambda)$ is $\Lambda = \Lambda_K$ where $K = \Q(\sqrt{5})$, which for dimension $n = 2$ answers a question raised in \cite[Section III]{oggier_belfiore_bayer}.  We then consider ideal lattices $\Lambda_c$ arising from Markoff numbers, an infinite family of ideal lattices which have $N(\Lambda_c)>1/3$.  Using the tools of Section \ref{computing_good_bases}, we construct four infinite sub-families of Markoff lattices, arising from Fibonacci and Pell numbers, such that $\cos\theta$ approaches $0$, $(6-4\sqrt{5})/11$, $(3-\sqrt{2})/7$, and $(15-11\sqrt{2})/17$ as $c\rightarrow \infty$, respectively.  

\section{Well-Rounded Twists of Planar Lattices}\label{wr_twists_any_lattice}

\subsection{Geodesics in the Upper-Half Plane}

We begin with a picture in the upper-half plane $\mathcal{H} = \{(x,y)\in \R^2: y > 0\}$.  Given a lattice $\Lambda\subset\R^2$, we can rotate, scale, and change bases so that the first basis vector is $[1\ 0]^T$ and the other is in $\mathcal{H}$.  Modding out by the action of $SL_2(\Z)$ on $\mathcal{H}$ via fractional linear transformations, we arrive at the classical identification of $\mathcal{S}_2$ with the \emph{fundamental domain} $\mathcal{F}$:
\begin{equation}
\mathcal{S}_2 = \mathcal{F} = \left\{(x,y)\in \mathcal{H}: -\frac{1}{2}<x\leq \frac{1}{2},\ x^2+y^2\geq 1\right\}
\end{equation}
The well-rounded locus $\mathcal{W}_2$ sits inside $\mathcal{F}$ as the bottom arc:
\begin{equation}
\mathcal{W}_2 = \{(x,y)\in \mathcal{F}: x^2 + y^2 = 1, x\geq 0\} = \{(x,y)\in \mathcal{F}: x^2 + y^2 = 1\}/\sim
\end{equation}
where the equivalence relation $\sim$ identifies $(x,y)$ with $(-x,y)$.  This equivalence relation can be realized in terms of lattices as the change of basis $\{u,v\}\mapsto\{u,-v\}$ followed by a rotation which sends $-v$ to $(1,0)$.

Consider the diagonal group $\mathcal{A}_2$ 
\begin{equation}
\mathcal{A}_2 = \left\{T_\alpha = \begin{bmatrix} \alpha & 0 \\ 0 & 1/\alpha \end{bmatrix}: \alpha > 0\right\}
\end{equation}
and let $\Lambda\in \mathcal{L}_2$ be a lattice.  We define
\begin{equation}\label{geodesic_and_wr}
\gamma(\Lambda) = [\mathcal{A}_2\cdot \Lambda]\subset \mathcal{F} \quad\text{and}\quad
w(\Lambda) = \gamma(\Lambda)\cap \mathcal{W}_2
\end{equation}
Hence $\gamma(\Lambda)$ is the geodesic in $\mathcal{F}$ corresponding to the similarity classes of lattices in the orbit $\mathcal{A}_2\cdot \Lambda$, and $w(\Lambda)$ is the set of all well-rounded twists of $\Lambda$.

We can describe $\gamma(\Lambda)$ explicitly as follows.  Fix a basis $B = \{[a\ c]^T, [b\ d]^T\}$ of $\Lambda$ such that $ad-bc >0$, and let $T_\alpha$ be a twisting matrix.  Rotating and scaling $T_\alpha B$ so that the first basis vector becomes $[1\ 0]^T$, we get the following generator matrix for a lattice in the same similarity class as $T_\alpha \Lambda$:
\begin{equation}
\left[\begin{matrix} 1 \\ 0 \end{matrix}\ \ \tau(\Lambda,\alpha)\right]
\quad \text{where}\quad
\tau(\Lambda,\alpha) = \frac{1}{a^2\alpha^4+c^2}\begin{bmatrix}
ab\alpha^4 + cd \\
\alpha^2(ad - bc)
\end{bmatrix}.
\end{equation}
Now $\tau(\Lambda,\alpha)$ traces out a curve in the upper half plane as $\alpha\in \R_{>0}$ varies, and modding out by the action of $SL_2(\Z)$ we obtain $\gamma(\Lambda)=\tau(\Lambda,\alpha)/SL_2(\Z)\subset \mathcal{F}$.

\begin{figure}
\centering
\includegraphics[width=.3\textwidth]{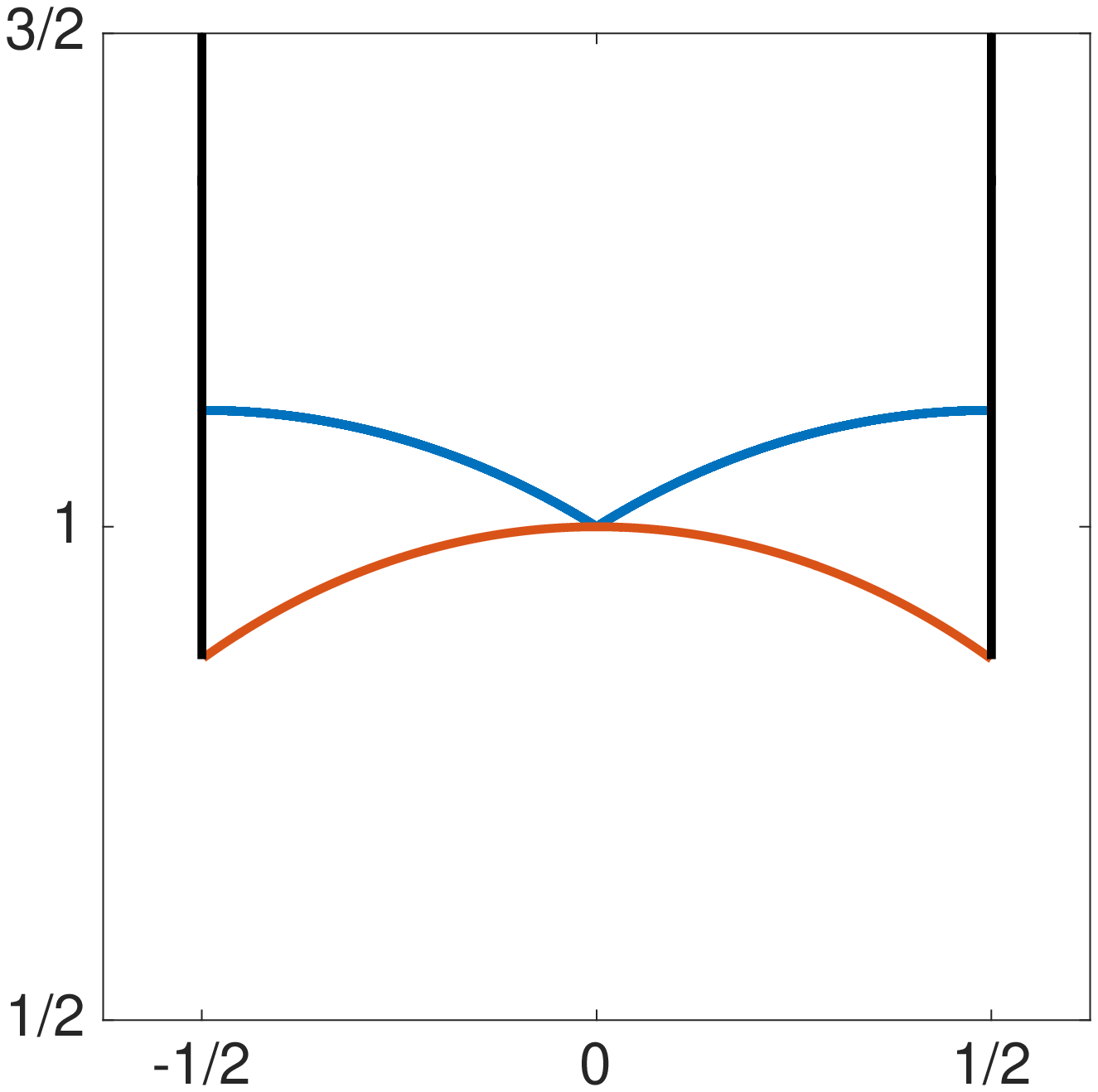}\hfill
\includegraphics[width=.3\textwidth]{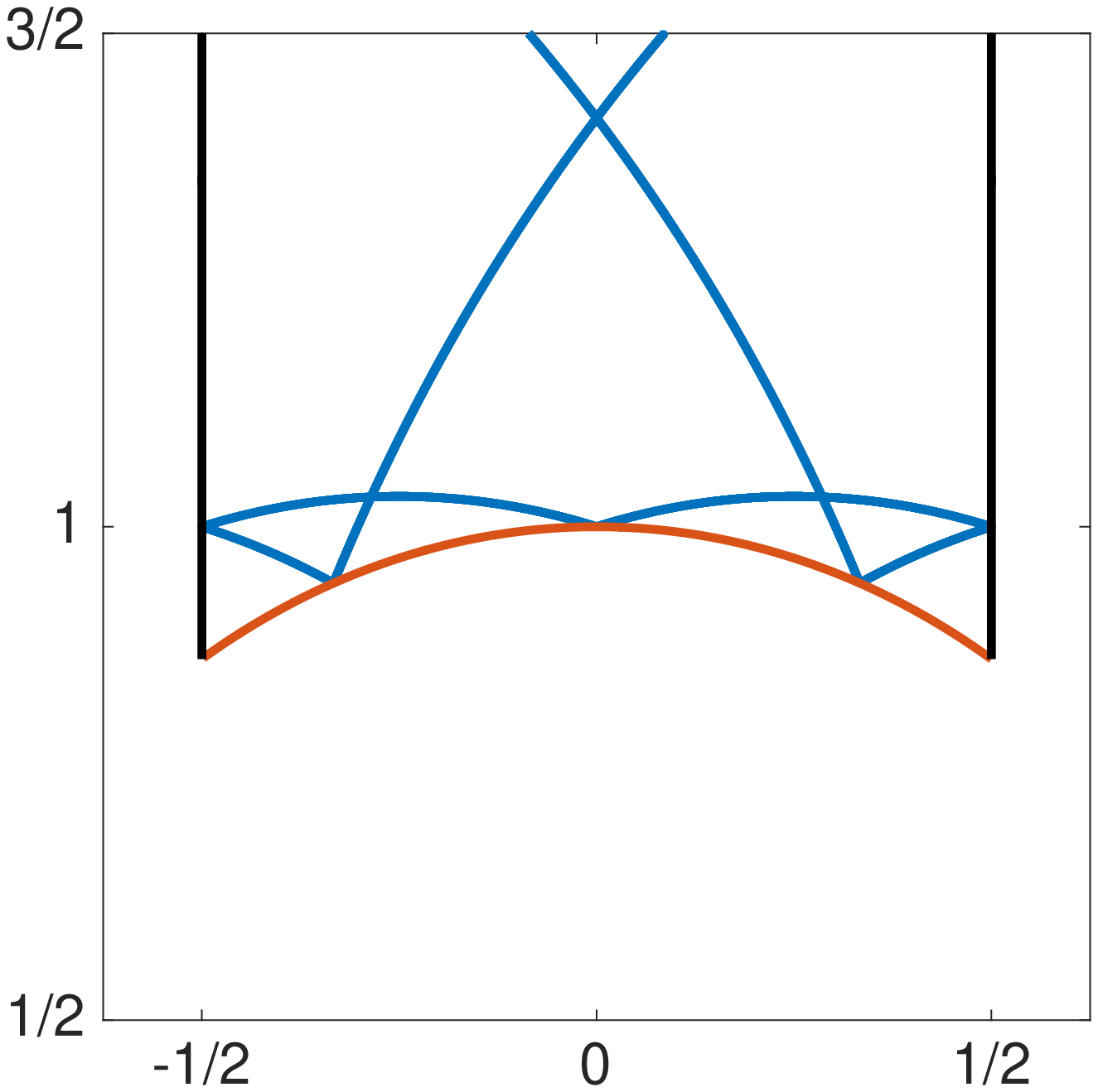}\hfill
\includegraphics[width=.3\textwidth]{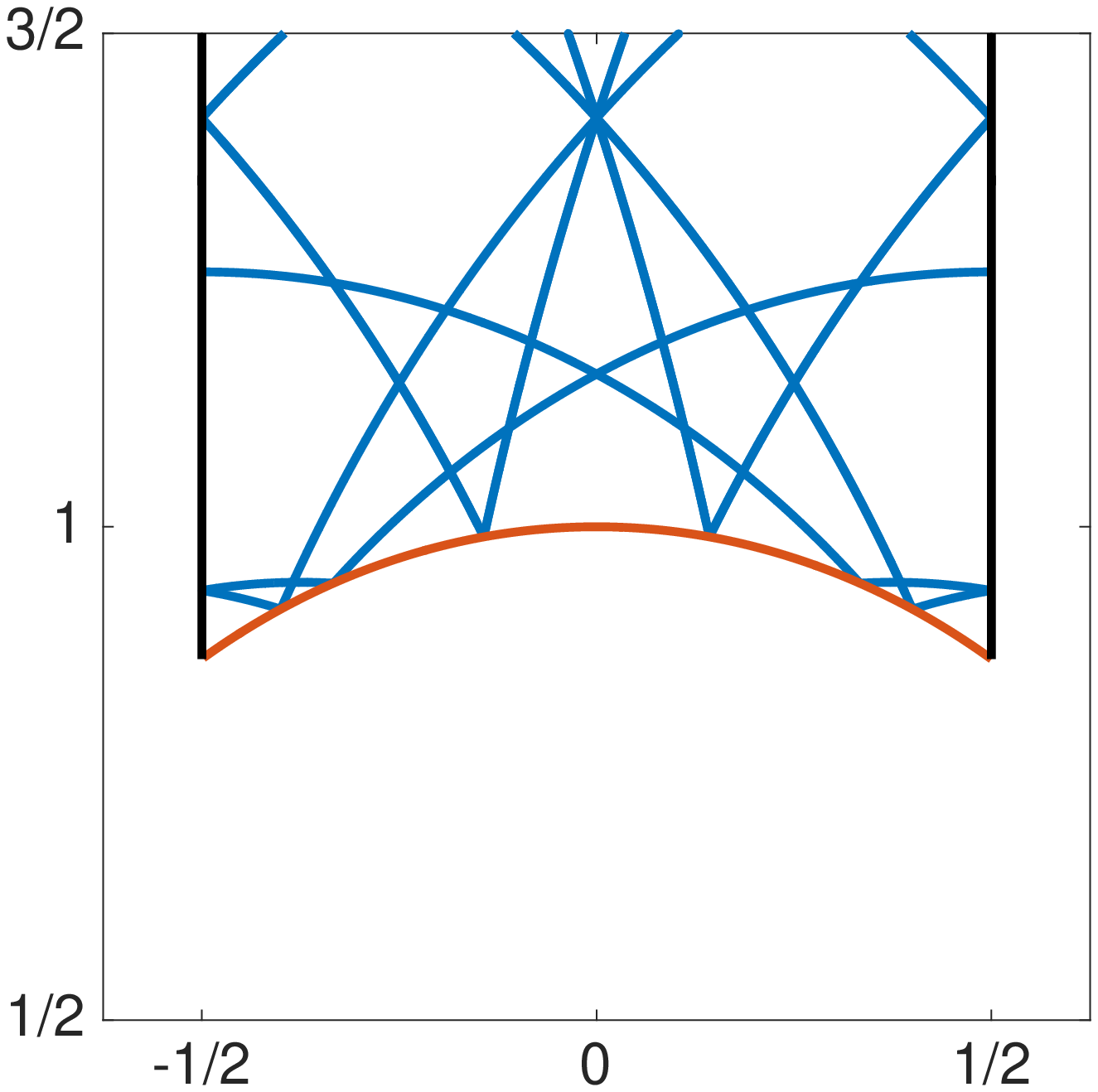}
\caption{A plot of the geodesic $\gamma(\Lambda_K)$, for $K = \Q(\sqrt{D})$, for $D = 5$ (left), $D = 17$ (center), and $D = 57$ (right).  The blue curve is $\gamma(\Lambda_K)$, the orange curve $\mathcal{W}_2$, and their intersection the set $w(\Lambda_K)$ of well-rounded twists of $\Lambda_K$.}\label{TL_orbit}
\end{figure}

In Fig.\ \ref{TL_orbit} we depict the curve $\gamma(\Lambda_K)$ for $K =\Q(\sqrt{D})$ for $D = 5$ (left), $D = 17$ (center), and $D = 57$ (right).  Here the blue curve is $\gamma(\Lambda_K)$ and the orange curve is $\mathcal{W}_2$.  The picture suggests that $|w(\Lambda)| = 1$, $2$, and $3$, respectively.  In what follows, we show how to calculate these well-rounded twists explicitly.

\begin{example}\label{first_example}

This example is familiar from the theory of lattice coding for Rayleigh fading channels  \cite[Section 7.1]{oggier_belfiore}, and also appears in the construction of the Golden Code \cite{golden_code} used in multiple-antenna communications.

Let $K = \Q(\sqrt{5})$ and let $\mathcal{I} = \mathcal{O}_K$.  Fig.\ \ref{TL_orbit} suggests that $\Lambda_K$ has exactly one well-rounded twist, namely the orthogonal lattice.  To construct this twist explicitly, consider the basis $B = \{[1\ 1]^T, [\omega,\bar{\omega}]^T\}$ of the lattice $\Lambda_K$.  Letting $\alpha = \left(-\bar{\omega}/\omega\right)^{1/4}$, one checks that the basis $T_\alpha B$ of $T_\alpha \Lambda$ consists of two equal-length orthogonal vectors.  
\end{example}

\subsection{A General Condition for Well-Rounded Twists}

Throughout this section, we denote by $\Lambda$ a lattice in $\R^2$, and by $B$ a basis of $\Lambda$:
\begin{equation}
B = \{x,y\} = \{[a\ c]^T, [b\ d]^T\}.
\end{equation}
We define the polynomial
\begin{equation}\label{fdefn}
F(B) = (ac)^2 + abcd + (bd)^2 - \vol(\Lambda)^2/4.
\end{equation}
We put an equivalence relation on bases of $\Lambda$ according to the value of $F(B)$:
\begin{equation}
B\sim B' \Leftrightarrow F(B) = F(B').
\end{equation}
For example, if $B = \{x,y\}$ then all four bases $\{\pm x,\pm y\}$ are equivalent.  

The goal of this section is to establish a natural bijection between $w(\Lambda)$ and equivalence classes of bases satisfying $F(B)\leq 0$.  We begin by focusing on the first condition (i) in our characterization of planar well-rounded lattices.

\begin{definition}\label{twistable_defn}
We will say that $B$ is \emph{twistable} if there exists $T_\alpha\in \mathcal{A}_2$ such that $||T_\alpha x||^2 = ||T_\alpha y||^2$.
\end{definition}

\begin{prop}\label{twisting_element}
Let $\beta = \frac{d^2-c^2}{a^2-b^2}$.  Then $B$ is twistable if and only if $\beta>0$.  If this is the case, then $T_\alpha$ is unique and $\alpha = \beta^{1/4}$.
\end{prop}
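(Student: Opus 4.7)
The plan is to set up the equation $\|T_\alpha x\|^2 = \|T_\alpha y\|^2$ directly, solve for $\alpha$, and read off both the existence condition and uniqueness from the resulting equation.

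First, I would compute the two squared norms explicitly. Since $T_\alpha x = [\alpha a,\ c/\alpha]^T$ and $T_\alpha y = [\alpha b,\ d/\alpha]^T$, we have
\begin{equation*}
\|T_\alpha x\|^2 = \alpha^2 a^2 + \alpha^{-2} c^2,\qquad \|T_\alpha y\|^2 = \alpha^2 b^2 + \alpha^{-2} d^2.
\end{equation*}
Setting these equal and clearing the factor of $\alpha^2$ (which is positive, so this is an equivalence) transforms the equation $\|T_\alpha x\|^2 = \|T_\alpha y\|^2$ into
\begin{equation*}
\alpha^4 (a^2 - b^2) = d^2 - c^2.
\end{equation*}

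Next I would analyze this last equation as a condition on $\alpha > 0$. Since $\alpha^4 > 0$, the left-hand side has the same sign as $a^2 - b^2$, so a positive solution exists exactly when $a^2 - b^2$ and $d^2 - c^2$ are both nonzero and of the same sign, i.e.\ precisely when $\beta = (d^2-c^2)/(a^2-b^2) > 0$. In this case the equation rearranges to $\alpha^4 = \beta$, which over the positive reals has the unique solution $\alpha = \beta^{1/4}$. This simultaneously gives both halves of the biconditional and the uniqueness claim.

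I would also briefly note why the degenerate cases cause no trouble: if $a^2 = b^2$ then the equation forces $d^2 = c^2$ as well, but then $x$ and $y$ would have identical entrywise squares and could only be linearly independent in ways that make $B$ already equal-norm with $\alpha = 1$; in any event such $B$ fall outside the regime where $\beta$ is defined, so there is nothing to prove. The only real step is the algebraic manipulation above, and I do not anticipate a genuine obstacle — the whole statement is essentially a one-line calculation dressed up as a proposition, with the uniqueness being immediate from the monotonicity of $\alpha \mapsto \alpha^4$ on $\R_{>0}$.
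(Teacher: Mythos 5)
Your proof is correct and follows essentially the same route as the paper: write out $\|T_\alpha x\|^2 = \|T_\alpha y\|^2$, clear denominators to get $\alpha^4(a^2-b^2) = d^2-c^2$, and read off existence ($\beta>0$) and uniqueness from the monotonicity of $\alpha\mapsto\alpha^4$ on $\R_{>0}$. The closing remark on the case $a^2=b^2$ is slightly garbled (if $a^2=b^2$ but $d^2\neq c^2$ the equation simply has no solution, rather than forcing $d^2=c^2$), but this is outside the regime where $\beta$ is defined and does not affect the argument.
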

\begin{proof} Writing out the equation $||T_\alpha x||^2 = ||T_\alpha y||^2$ with $\alpha$ as a variable and clearing denominators yields an equation in $\alpha^4$, and solving this equation gives $\alpha^4 = \frac{d^2-c^2}{a^2-b^2}$.  The result follows.
\end{proof}

\begin{definition}
If $B$ is twistable, we let $\theta_{T_\alpha B}$ denote the angle between the resulting basis vectors $T_{\alpha} B = \{T_{\alpha} x, T_{\alpha} y\}$ of the twisted lattice $T_{\alpha} \Lambda$.  To emphasize that $\alpha$ is a function of the basis $B$, we will sometimes write $\alpha(B)$ for $\alpha$.
\end{definition}

\begin{prop}\label{cos_of_twist}
For a twistable basis $B$ with twisting matrix $T_\alpha$, we have
\begin{equation}
\cos \theta_{T_\alpha B} = \frac{ac+bd}{ad+bc}.
\end{equation}
\end{prop}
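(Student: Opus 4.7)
The plan is a direct computation. Since $B$ is twistable, Proposition \ref{twisting_element} gives $\alpha^4 = \frac{d^2-c^2}{a^2-b^2}$, and the twisted basis vectors are $T_\alpha x = [\alpha a,\ c/\alpha]^T$ and $T_\alpha y = [\alpha b,\ d/\alpha]^T$. Because $||T_\alpha x|| = ||T_\alpha y||$, the formula for cosine simplifies to
\begin{equation*}
\cos\theta_{T_\alpha B} = \frac{\langle T_\alpha x, T_\alpha y\rangle}{||T_\alpha x||^2} = \frac{\alpha^2 ab + cd/\alpha^2}{\alpha^2 a^2 + c^2/\alpha^2} = \frac{\alpha^4 ab + cd}{\alpha^4 a^2 + c^2},
\end{equation*}
after multiplying numerator and denominator by $\alpha^2$.

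Next I would substitute the value of $\alpha^4$ and clear the common denominator $a^2 - b^2$ in both the top and bottom. The numerator becomes $ab(d^2 - c^2) + cd(a^2 - b^2)$, and the denominator becomes $a^2(d^2 - c^2) + c^2(a^2 - b^2) = a^2 d^2 - b^2 c^2$. These are the two expressions that must be massaged into factored form.

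The key observation — and the only step that requires a bit of foresight rather than brute expansion — is that both expressions admit a clean factorization involving $ad - bc$. For the numerator, grouping gives $abd^2 + a^2 cd - abc^2 - b^2 cd = ad(ac+bd) - bc(ac+bd) = (ad-bc)(ac+bd)$. For the denominator, $a^2 d^2 - b^2 c^2 = (ad-bc)(ad+bc)$ is the standard difference of squares. Canceling the factor $(ad-bc)/(a^2-b^2)$, which is nonzero since $\vol(\Lambda) = ad-bc > 0$ (and $a^2 \neq b^2$ by twistability), yields
\begin{equation*}
\cos\theta_{T_\alpha B} = \frac{ac+bd}{ad+bc},
\end{equation*}
as claimed. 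There is essentially no obstacle beyond spotting these two factorizations; the whole argument is mechanical once the right grouping in the numerator is noticed.
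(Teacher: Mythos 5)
Your proof is correct and follows the same route as the paper's: use $||T_\alpha x|| = ||T_\alpha y||$ to write $\cos\theta_{T_\alpha B} = \frac{\alpha^4 ab + cd}{\alpha^4 a^2 + c^2}$, substitute $\alpha^4 = \frac{d^2-c^2}{a^2-b^2}$, and factor. You simply make explicit the factorizations $(ad-bc)(ac+bd)$ and $(ad-bc)(ad+bc)$ that the paper leaves implicit.
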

\begin{proof}
Using the fact that $||T_\alpha x|| = ||T_\alpha y||$, we compute that
\begin{align}
\cos \theta_{T_\alpha B} &= \frac{\langle T_\alpha x, T_\alpha y\rangle}{||T_\alpha x||\cdot ||T_\alpha y||} = \frac{\alpha^4 ab + cd}{\alpha^4 a^2 + c^2} \\
&= \frac{abd^2 - abc^2 + a^2cd -b^2cd}{a^2d^2-b^2c^2} = \frac{ac+bd}{ad+bc}
\end{align}
which is what was claimed.
\end{proof}

\begin{prop}\label{good_implies_twistable}
If $\kappa(B) := \left|\frac{ac+bd}{ad+bc}\right|\leq 1/2$, then $\beta = \frac{d^2-c^2}{a^2-b^2}>0$.  
\end{prop}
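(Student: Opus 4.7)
The plan is to reduce the proposition to a single algebraic identity that expresses the numerator and denominator of $\kappa(B)$ in terms of $a^2-b^2$ and $d^2-c^2$. Specifically, I want to establish
\begin{equation}
(ad+bc)^2 - (ac+bd)^2 = (a^2-b^2)(d^2-c^2),
\end{equation}
which drops out by factoring the left-hand side as a difference of squares: $((ad+bc)-(ac+bd))((ad+bc)+(ac+bd)) = (a-b)(d-c)(a+b)(d+c)$. Once this identity is in hand, the sign of $\beta$ is controlled directly by comparing the sizes of the numerator and denominator of $\kappa(B)$.

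First I would note that the hypothesis $\kappa(B)\le 1/2$ tacitly presumes that $ad+bc\neq 0$, since otherwise the expression $\frac{ac+bd}{ad+bc}$ is undefined. Squaring the inequality $|ac+bd|\le \tfrac{1}{2}|ad+bc|$ gives $(ac+bd)^2 \le \tfrac{1}{4}(ad+bc)^2 < (ad+bc)^2$, so by the identity above the product $(a^2-b^2)(d^2-c^2)$ is strictly positive. In particular both factors are nonzero and of the same sign, which is exactly the statement $\beta = (d^2-c^2)/(a^2-b^2) > 0$.

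The only loose end is to check that the denominator $a^2-b^2$ of $\beta$ is nonzero, so that $\beta$ itself is well-defined. I would handle this as a brief case check: if $a=\pm b$ with $a\ne 0$, then a direct substitution into the numerator and denominator of $\kappa(B)$ produces $|ac+bd| = |ad+bc|$, giving $\kappa(B)=1$, which contradicts the hypothesis; and if $a=b=0$ the basis $B$ fails to span $\R^2$. This shows $a^2-b^2\ne 0$ automatically under the hypothesis, and the conclusion $\beta>0$ follows from the identity as above.

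The main conceptual obstacle is simply spotting the factorization identity; once one sees that $(ad+bc)\pm(ac+bd)$ factor as $(a\pm b)(d\pm c)$, the rest of the argument is essentially forced. I would therefore lead the proof with the identity, derive the sign inequality, and relegate the $a^2=b^2$ degeneracy to a short aside.
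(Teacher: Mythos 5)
Your proof is correct, but it follows a genuinely different algebraic route than the paper's. The paper squares $\kappa(B)\le 1/2$ and rearranges to the inequality $(a^2-b^2)(d^2-c^2)\ge 3(ac+bd)^2\ge 0$ (as written the paper has a harmless sign typo, $(ac-bd)$ for $(ac+bd)$), then divides by $(a^2-b^2)^2$, after first noting that $a\neq\pm b$ and $c\neq\pm d$ since either equality would force $\kappa(B)=1$. Your argument instead rests on the cleaner difference-of-squares identity
\begin{equation}
(ad+bc)^2-(ac+bd)^2=(a^2-b^2)(d^2-c^2),
\end{equation}
which immediately converts the hypothesis into strict positivity of the product $(a^2-b^2)(d^2-c^2)$. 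This buys two things: the identity is transparent to verify (no need to track the $3(ac+bd)^2$ cross term), and it shows the conclusion actually holds under the weaker hypothesis $\kappa(B)<1$, not just $\kappa(B)\le 1/2$, so your version is slightly more general. One small redundancy: once you know the product $(a^2-b^2)(d^2-c^2)$ is strictly positive, both factors are automatically nonzero, so the closing case-check that $a^2-b^2\ne 0$ is already subsumed and could be dropped.
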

\begin{proof}
First note that we must have $a\neq \pm b$ and $c\neq \pm d$, else $\left|\frac{ac+bd}{ad+bc}\right| = 1$ which contradicts our assumption.  Squaring both sides of $\left|\frac{ac+bd}{ad+bc}\right|\leq 1/2$ and simplifying gives
\begin{equation}
0\leq (a^2-b^2)(d^2-c^2) - 3(ac-bd)^2.
\end{equation}
Dividing by the positive quantity $(a^2-b^2)^2$ gives
\begin{equation}
0\leq 3\frac{(ac-bd)^2}{(a^2-b^2)^2}\leq \frac{d^2-c^2}{a^2-b^2} = \beta
\end{equation}
but $\beta\neq 0$ since $c\neq \pm d$, which finishes the proof.
\end{proof}

Thus if $\Lambda$ has a basis $B$ such that $\kappa(B)\leq 1/2$ and we set $\alpha = \beta^{1/4}$ (a well-defined positive real number by Proposition \ref{good_implies_twistable}), the twisted lattice $T_\alpha \Lambda$ is well-rounded with minimal basis $T_\alpha B$ and minimal angle $\theta_{T_\alpha B}$.  Conversely, if $B$ is twistable and the resulting twist is well-rounded with minimal basis $T_\alpha B$, we must have $\kappa(B)\leq 1/2$ by Proposition \ref{cos_of_twist}.  This prompts the following definition.

\begin{definition}\label{good_defn}
A basis $B$ of $\Lambda$ is \emph{good for twisting}, or simply a \emph{good basis}, if there exists a twisting matrix $T_\alpha\in\mathcal{A}_2$ such that $T_\alpha\Lambda$ is well-rounded with minimal basis $T_\alpha B$.  Equivalently, $B$ is good for twisting if and only if $\kappa(B) \leq 1/2$.
\end{definition}

Thus the property of being good for twisting as in Definition \ref{good_defn} is more demanding that that of merely being twistable as in Definition \ref{twistable_defn}.  If $B$ is  twistable but not \emph{good} for twisting, then $T_\alpha B$ need not be a \emph{minimal} basis of $T_\alpha \Lambda$, and moreover $T_\alpha \Lambda$ need not even be well-rounded.

\begin{theorem}\label{best_wr_cond}
There is a natural bijection between the set $w(\Lambda)$ of equivalence classes of well-rounded twists of $\Lambda$, and equivalence classes of good bases.  More specifically,
\begin{itemize}
\item[(i)] A basis $B$ of $\Lambda$ is good for twisting if and only if $F(B) \leq 0$.  
\item[(ii)] If $B$ and $B'$ are both good for twisting with associated twisting matrices $T_\alpha$ and $T_{\alpha'}$, then $[T_\alpha\Lambda] = [T_{\alpha'}\Lambda]$ in $\mathcal{W}_2$ if and only if $F(B) = F(B')$.
\item[(iii)] If $B$ is good for twisting, then
\begin{equation}
(ac)^2\leq \vol(\Lambda)^2/3\quad\text{and}\quad (bd)^2\leq \vol(\Lambda)^2/3.
\end{equation}
\end{itemize}
\end{theorem}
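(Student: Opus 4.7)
Plan: The proof splits naturally into parts (i), (iii), and (ii), in that order of increasing difficulty. Parts (i) and (iii) are short algebraic computations, while (ii) requires additional structural analysis of symmetry-invariants.

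For (i), the starting point is $\kappa(B) \leq 1/2$. Squaring gives $4(ac+bd)^2 \leq (ad+bc)^2$. Expanding both sides and using the identity $(ad-bc)^2 = \vol(\Lambda)^2$ to rewrite $a^2d^2 + b^2c^2 = \vol(\Lambda)^2 + 2abcd$, a short calculation yields
\[
4(ac+bd)^2 - (ad+bc)^2 = 4F(B),
\]
so $\kappa(B) \leq 1/2 \iff F(B) \leq 0$. By Definition \ref{good_defn} this is the characterization of $B$ being good for twisting.

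For (iii), set $u = ac$ and $v = bd$. The inequality $F(B) \leq 0$ reads $u^2 + uv + v^2 \leq \vol(\Lambda)^2/4$. Completing the square gives
\[
u^2 + uv + v^2 = \left(u + \tfrac{v}{2}\right)^2 + \tfrac{3v^2}{4},
\]
and so $3v^2/4 \leq \vol(\Lambda)^2/4$, i.e., $(bd)^2 \leq \vol(\Lambda)^2/3$. The bound $(ac)^2 \leq \vol(\Lambda)^2/3$ follows from the symmetry of the expression in $u$ and $v$.

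For (ii), the key ingredients are a pair of identities. From (i) we have $F(B) = (ad+bc)^2(\kappa(B)^2 - 1/4)$. Separately, factoring gives $(a^2-b^2)(d^2-c^2) = (ad+bc)^2 - (ac+bd)^2 = (ad+bc)^2(1-\kappa(B)^2)$. Dividing these,
\[
\frac{F(B)}{(a^2-b^2)(d^2-c^2)} = \frac{\kappa(B)^2 - 1/4}{1 - \kappa(B)^2},
\]
a strictly monotonic function of $\kappa(B)^2 \in [0, 1/4]$, which uniquely encodes the similarity class $[T_\alpha \Lambda] \in \mathcal{W}_2$ (equivalently, the point $(|\kappa(B)|, \sqrt{1-\kappa(B)^2}) \in \mathcal{F}$). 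To complete the equivalence, one argues that both $(ad+bc)^2$ and $(a^2-b^2)(d^2-c^2)$ are invariants of the WR twist class among good bases: any two good bases yielding the same WR twist class are related by symmetries of that class (sign changes and swaps of basis vectors, together with rotational symmetries in the orthogonal and hexagonal cases), and a direct check shows each such symmetry preserves both quantities. The forward implication then follows immediately, and the reverse implication follows from the injectivity of the ratio above combined with the class-invariance of $(a^2-b^2)(d^2-c^2)$.

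The main obstacle is the careful verification of symmetry-invariance in (ii), which requires a case analysis distinguishing the generic, orthogonal, and hexagonal twists; the calculations for each individual symmetry are brief, but the enumeration of cases and the interplay with any additional automorphisms of $\Lambda$ demand care.
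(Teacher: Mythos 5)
Parts (i) and (iii) are correct, and your proofs match the paper's in substance. For (i), your identity $4(ac+bd)^2 - (ad+bc)^2 = 4F(B)$ is exactly the computation the paper performs. For (iii), you complete the square $u^2+uv+v^2 = (u+v/2)^2 + \tfrac{3}{4}v^2$ while the paper minimizes $\gamma^2+\gamma+1$; these are equivalent one-line calculus facts, so this is a cosmetic variation.

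For (ii), you actually go further than the paper: the paper's proof consists of the single sentence that $\kappa(B)=\kappa(B')$ is "in turn equivalent to $F(B) = F(B')$ by the proof of the previous proposition," with no further justification. You correctly observe that $F(B) = (ad+bc)^2(\kappa(B)^2 - 1/4)$, so $F$ and $\kappa$ are not \emph{a priori} in bijection; the factor $(ad+bc)^2$ varies with the basis. This is a real subtlety, and noticing it is to your credit.

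However, your argument for closing the gap has a genuine hole. You assert that any two good bases $B,B'$ yielding the same WR twist class are related by "sign changes and swaps of basis vectors, together with rotational symmetries in the orthogonal and hexagonal cases," and that a direct check shows each such symmetry preserves $(ad+bc)^2$. This enumeration handles the case $T_\alpha\Lambda = T_{\alpha'}\Lambda$ (the same lattice), where the two minimal bases differ by signs, swaps, or a lattice automorphism of finite order. But it does \emph{not} cover the case where $T_\alpha\Lambda$ and $T_{\alpha'}\Lambda$ are distinct lattices lying in the same similarity class, i.e., $T_{\alpha'}\Lambda = R\,T_\alpha\Lambda$ for a nontrivial rotation $R$. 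In that case $B'$ is related to $B$ by a sign change composed with the map $g = T_{\alpha'}^{-1} R\, T_\alpha \in \mathrm{Aut}(\Lambda)$, which is generally neither a sign change nor a swap nor an orthogonal symmetry; and a genuine rotation does \emph{not} preserve the norm form, so $(ad+bc)^2$ is not obviously preserved by $g$. To rule this case out one must actually argue, using the discreteness and explicit structure of $\mathrm{Aut}(\Lambda)\cap SL_2(\R)$, that the diagonal orbit cannot hit a single point of $\mathcal{W}_2$ at two essentially different parameters — i.e., that $[T_\alpha\Lambda] = [T_{\alpha'}\Lambda]$ forces $T_\alpha\Lambda = T_{\alpha'}\Lambda$ when $\Lambda$ is not itself exceptional. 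This step is missing from your proposal (and, in fairness, is also not made explicit in the paper's terse proof of (ii)).
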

\begin{proof} Part (i) is seen by squaring both sides of $\kappa(B)\leq 1/2$ (which is an invertible process, since both sides are positive) and clearing denominators to obtain $(ac+bd)^2 \leq \frac{1}{4}(ad+bc)^2$.  Now expand out and simplify using the formula $(ad-bc)^2 = \vol(\Lambda)^2$ to get the result.

The first condition in part (ii) is equivalent to $\kappa(B) = \kappa(B')$, which is in turn equivalent to $F(B) = F(B')$ by the proof of the previous proposition.

For part (iii), consider the function $f:\R^2\rightarrow \R$ defined by $f(X,Y) = X^2 + XY + Y^2$.  Setting $Y = \gamma X$ for some real number $\gamma$ gives $f(X,\gamma X) = (\gamma^2 + \gamma + 1)X^2$, and a simple calculus exercise shows that the function $g:\R\rightarrow \R$ given by $g(\gamma) = \gamma^2 + \gamma + 1$ has a single minimum at $\gamma = -1/2$ with value $g(-1/2) = 3/4$.  It follows that $f(X,Y)\geq \frac{3}{4}X^2$.  Setting $X = ac$ and $Y = bd$ and using condition (iii) yields
\begin{equation}
(ac)^2\leq \frac{4}{3}\left((ac)^2 + abcd + (bd)^2\right)\leq \vol(\Lambda)^2/3
\end{equation}
as claimed.  By symmetry, we also have $(bd)^2\leq \vol(\Lambda)^2/3$, which proves the third part of the theorem.
\end{proof}

\section{Well-Rounded Twists of Planar Ideal Lattices}\label{wr_twists_ideal_lattices}

\subsection{Good Bases of Ideals}

Let $\mathcal{I}$ be an ideal in the ring of integers $\mathcal{O}_K$ of a real quadratic field $K = \Q(\sqrt{D})$.  Clearly, the canonical embedding $\psi:K\hookrightarrow\R^2$ determines a bijection between $\Z$-bases $B = \{x,y\}$ of $\mathcal{I}$ and $\Z$-bases $\psi(B) = \{\psi(x),\psi(y)\}$ of $\Lambda_\mathcal{I}$.  In what follows, we will simply say ``basis'' instead of ``$\Z$-basis''.

\begin{definition}
A basis $B$ of $\mathcal{I}$ is \emph{twistable} if the basis $\psi(B)$ of $\Lambda_\mathcal{I}$ is twistable.  Similarly, $B$ is \emph{good for twisting} if the basis $\psi(B)$ of $\Lambda_\mathcal{I}$ is good for twisting.  If $B$ is a twistable basis of $\mathcal{I}$, then we define $\theta_{T_\alpha B}$ to be the angle between the basis vectors in $T_\alpha B:=T_\alpha\psi(B)$, where $\alpha = \alpha(B) = \alpha(\psi(B))$ is as in Proposition \ref{twisting_element}.
\end{definition}

\begin{prop}\label{cos}
Let $B=\{x,y\}$ be a twistable basis of $\mathcal{I}$.  Then
\begin{equation}\label{coseqn}
\cos \theta_{T_\alpha B}  = \frac{N(x)+N(y)}{Tr(x\bar{y})}.
\end{equation}
\end{prop}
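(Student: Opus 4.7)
The plan is to reduce this to Proposition \ref{cos_of_twist} by a direct substitution, since the proposition already gives $\cos\theta_{T_\alpha B}$ in terms of the entries $a,b,c,d$ of a twistable basis, and the canonical embedding makes those entries conjugate pairs in $K$.

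First, I would write out the basis vectors under $\psi$: with the identification $\tau_1 = \mathrm{id}$, $\tau_2 = \sigma$, we have $\psi(x) = [x\ \bar{x}]^T$ and $\psi(y) = [y\ \bar{y}]^T$, so in the notation of Section \ref{wr_twists_any_lattice} the entries of $\psi(B)$ are $a = x$, $c = \bar{x}$, $b = y$, $d = \bar{y}$. Since $B$ is assumed twistable, so is $\psi(B)$ by definition, and Proposition \ref{cos_of_twist} applies to give $\cos\theta_{T_\alpha B} = (ac+bd)/(ad+bc)$.

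Now I would just recognize the numerator and denominator as familiar Galois-theoretic quantities in the real quadratic field $K$. The numerator becomes $ac + bd = x\bar{x} + y\bar{y} = N(x) + N(y)$, using the algebraic norm $N(z) = z\bar{z}$ for $z \in K$. The denominator becomes $ad + bc = x\bar{y} + \bar{x}y = x\bar{y} + \overline{x\bar{y}} = Tr(x\bar{y})$. Substituting yields the claimed formula.

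There is essentially no obstacle here beyond keeping conventions straight: the norm $N$ in \eqref{coseqn} must be interpreted as the algebraic norm from $K$ to $\Q$ (which can be negative), not as the absolute value $|x_1 x_2|$ defined for vectors in $\R^2$ in Section 1.2. Once that is clarified, the proof is a one-line substitution, and no further computation is needed.
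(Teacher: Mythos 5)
Your proof is correct and follows exactly the route the paper takes: both specialize Proposition~\ref{cos_of_twist} to $\Lambda = \Lambda_\mathcal{I}$ via the canonical embedding, with $a = x$, $c = \bar{x}$, $b = y$, $d = \bar{y}$, and then identify $ac + bd = N(x)+N(y)$ and $ad + bc = Tr(x\bar{y})$. The paper states this in one line; your version simply spells out the same substitution.
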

\begin{proof}
This is a special case of Proposition \ref{cos_of_twist} in the case where $\Lambda = \Lambda_\mathcal{I}$ is an ideal lattice.
\end{proof}

For any basis $B = \{x,y\}$ of an ideal $\mathcal{I}$, the polynomial $F(B)$ of (\ref{fdefn}) is of the form
\begin{equation}\label{useful_poly}
F(B) = F(x,y)=N(x)^2 + N(x)N(y) + N(y)^2 - N(\mathcal{I})^2\Delta_K/4.
\end{equation}
When $\Lambda = \Lambda_\mathcal{I}$ is an ideal lattice, Theorem \ref{best_wr_cond} takes the following form.

\begin{theorem}\label{norm_cond}
Let $\mathcal{I}$ be an ideal with basis $B = \{x,y\}$.  Then
\begin{itemize}
\item[(i)] $B$ is good for twisting if and only if $F(x,y)\leq 0$, in which case the twisting matrix $T_\alpha$ is given by $\alpha = ((\bar{y}^2-\bar{x}^2)(x^2-y^2))^{1/4}$.
\item[(ii)] If $B=\{x,y\}$ and $B'=\{x',y'\}$ are two good bases of $\mathcal{I}$ with corresponding twisting elements $\alpha$ and $\alpha'$, then $[T_\alpha \Lambda_\mathcal{I}] = [T_{\alpha'}\Lambda_{\mathcal{I}}]$ in $\mathcal{W}_2$ if and only if $F(x,y) = F(x',y')$.
\item[(iii)] If $B$ is good for twisting, then
\begin{equation}
N(x)^2\leq N(\mathcal{I})^2\Delta_K/3 \quad \text{and}\quad N(y)^2\leq N(\mathcal{I})^2\Delta_K/3.
\end{equation}
\end{itemize}
\end{theorem}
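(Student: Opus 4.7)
The plan is to observe that Theorem~\ref{norm_cond} is a direct translation of Theorem~\ref{best_wr_cond} under the canonical embedding $\psi : K \hookrightarrow \R^2$, so the proof amounts to writing down a dictionary and substituting. First I would fix the identification: for a basis $B = \{x, y\}$ of $\mathcal{I}$, the associated basis of $\Lambda_\mathcal{I}$ has columns $\psi(x) = (x, \bar{x})^T$ and $\psi(y) = (y, \bar{y})^T$. In the notation $\{[a\ c]^T, [b\ d]^T\}$ of Section~\ref{wr_twists_any_lattice}, this means $a = x$, $c = \bar{x}$, $b = y$, $d = \bar{y}$. Consequently
\begin{equation*}
ac = x\bar{x} = N(x), \quad bd = y\bar{y} = N(y), \quad abcd = N(x)N(y),
\end{equation*}
and combined with $\vol(\Lambda_\mathcal{I})^2 = N(\mathcal{I})^2 \Delta_K$ the polynomial~\eqref{fdefn} becomes exactly the expression~\eqref{useful_poly}, i.e.\ $F(\psi(B)) = F(x,y)$.

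With this translation in hand, part~(i) follows from Theorem~\ref{best_wr_cond}(i): $B$ is good for twisting iff $F(x,y) \le 0$. The formula for the twisting element follows from Proposition~\ref{twisting_element}, which gives $\alpha^4 = (d^2-c^2)/(a^2-b^2) = (\bar{y}^2-\bar{x}^2)/(x^2-y^2)$; Proposition~\ref{good_implies_twistable} guarantees that this ratio is positive whenever $B$ is good, so the positive fourth root is well-defined. Part~(ii) is an immediate rewrite of Theorem~\ref{best_wr_cond}(ii), since similarity of the twisted lattices is detected by equality of $F$ and we have already identified $F(\psi(B)) = F(x,y)$. Part~(iii) is Theorem~\ref{best_wr_cond}(iii) under the same substitution: $(ac)^2 \leq \vol(\Lambda)^2/3$ reads $N(x)^2 \leq N(\mathcal{I})^2 \Delta_K/3$, and the bound for $N(y)^2$ follows by the symmetric argument with $X = bd$, $Y = ac$.

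There is essentially no genuine obstacle here; the entire content of the theorem is the observation that, for ideal lattices, the geometric invariants appearing in the general criterion coincide with number-theoretic invariants ($N(x)$, $N(y)$, $N(\mathcal{I})$, $\Delta_K$). The only tiny check is that the correspondence $B \mapsto \psi(B)$ is a bijection between bases of $\mathcal{I}$ and bases of $\Lambda_\mathcal{I}$, so that the equivalence relation on bases pulls back cleanly and ``good'' for the ideal means precisely ``good'' for the lattice. The real payoff is the bound in part~(iii): it constrains the norms of vectors appearing in any good basis purely in terms of $N(\mathcal{I})$ and $\Delta_K$, which is exactly the finiteness statement needed to drive the explicit enumeration algorithm developed in the next section.
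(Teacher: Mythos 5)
Your proposal is correct and matches the paper exactly: the paper's proof is the single sentence ``this is a special case of Theorem~\ref{best_wr_cond} when $\Lambda = \Lambda_\mathcal{I}$ is an ideal lattice,'' and you have merely written out the substitution dictionary $a = x$, $c = \bar{x}$, $b = y$, $d = \bar{y}$, $\vol(\Lambda_\mathcal{I})^2 = N(\mathcal{I})^2\Delta_K$ that makes $F(\psi(B)) = F(x,y)$. As a minor bonus, your derivation $\alpha^4 = (\bar{y}^2 - \bar{x}^2)/(x^2 - y^2)$ from Proposition~\ref{twisting_element} gives the correct quotient, whereas the statement of Theorem~\ref{norm_cond}(i) misprints it as a product.
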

\begin{proof}
This is a special case of Theorem \ref{best_wr_cond} when $\Lambda = \Lambda_\mathcal{I}$ is an ideal lattice.
\end{proof}

Thus explicitly describing the set $w(\Lambda_\mathcal{I})$ reduces to studying bases $B = \{x,y\}$ such that $F(B)\leq 0$.  Given a good basis $B$ of $\mathcal{I}$, the equivalence class of the resulting well-rounded twist is then easily calculable using Proposition \ref{cos}.

By construction, we have a factorization $F(x,y) = F_1(x,y) F_2(x,y)$, where
\begin{align}
F_1(x,y) &= N(x) + N(y) + Tr(x\bar{y})/2 \label{f1} \\
F_2(x,y) &= N(x) + N(y) - Tr(x\bar{y})/2 \label{f2} 
\end{align}
and thus for a good basis $B = \{x,y\}$ of $\Lambda$, we have the expressions
\begin{equation}\label{cos_of_ideal}
\cos\theta_{T_\alpha B} = \frac{F_1(x,y) - Tr(x\bar{y})/2}{Tr(x\bar{y})} = \frac{F_2(x,y) + Tr(x\bar{y})/2}{Tr(x\bar{y})}
\end{equation}
which allow us to classify when the orthogonal and hexagonal lattices appear as twists of $\Lambda_\mathcal{I}$.
\begin{prop}\label{orth_and_hex}
Let $\mathcal{I}\subseteq\mathcal{O}_K$ be an ideal.
\begin{itemize}
\item[(i)] The orthogonal lattice is a twist of $\Lambda_\mathcal{I}$ if and only if $\mathcal{I}$ has a basis $B = \{x,y\}$ such that $N(x) + N(y) = 0$.
\item[(ii)] The hexagonal lattice is a twist of $\Lambda_\mathcal{I}$ if and only if $\mathcal{I}$ has a basis $B = \{x,y\}$ such that $F(x,y) = 0$.
\end{itemize}
\end{prop}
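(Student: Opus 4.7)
The key ingredients will be the cosine formula of Proposition \ref{cos} and the determinantal identity
\begin{equation*}
Tr(x\bar{y})^2 - 4N(x)N(y) = N(\mathcal{I})^2\Delta_K = \vol(\Lambda_\mathcal{I})^2,
\end{equation*}
obtained by squaring $\det[\psi(x)\ \psi(y)] = x\bar{y}-\bar{x}y$ for any $\Z$-basis $\{x,y\}$ of $\mathcal{I}$. The strategy for each part is to translate the well-rounded angle condition (orthogonal means $\cos\theta_{T_\alpha B} = 0$, hexagonal means $|\cos\theta_{T_\alpha B}| = 1/2$) into a polynomial condition in $N(x)$, $N(y)$, $Tr(x\bar{y})$, and then use the determinantal identity to eliminate $Tr(x\bar{y})$.

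For part (i), the forward implication is immediate: if the orthogonal lattice appears as a twist, a good basis $B = \{x,y\}$ realizing it satisfies $\cos\theta_{T_\alpha B} = (N(x)+N(y))/Tr(x\bar{y}) = 0$, forcing $N(x)+N(y) = 0$. For the converse, I must verify that any such $B$ is actually good for twisting, i.e., $F(x,y) \leq 0$, by Theorem \ref{norm_cond}(i). Substituting $N(y) = -N(x)$ collapses $F$ to $N(x)^2 - N(\mathcal{I})^2\Delta_K/4$, and the identity rewrites $4N(x)^2 = -4N(x)N(y) = N(\mathcal{I})^2\Delta_K - Tr(x\bar{y})^2 \leq N(\mathcal{I})^2\Delta_K$, so $F(x,y) \leq 0$ as required; Proposition \ref{cos} then gives $\cos\theta_{T_\alpha B} = 0$, producing the orthogonal twist.

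For part (ii), squaring $|\cos\theta_{T_\alpha B}| = 1/2$ and clearing denominators gives $4(N(x)+N(y))^2 = Tr(x\bar{y})^2$; substituting $Tr(x\bar{y})^2 = N(\mathcal{I})^2\Delta_K + 4N(x)N(y)$ from the identity and simplifying yields exactly $N(x)^2 + N(x)N(y) + N(y)^2 = N(\mathcal{I})^2\Delta_K/4$, i.e., $F(x,y) = 0$. Conversely, $F(x,y) = 0$ already guarantees that $B$ is good for twisting by Theorem \ref{norm_cond}(i), and running the same algebra in reverse produces $|\cos\theta_{T_\alpha B}| = 1/2$.

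I anticipate no major obstacle — the proof is essentially a direct computation given the tools already in hand. The only subtlety worth flagging is the degenerate case $Tr(x\bar{y}) = 0$, in which the cosine formula is formally $0/0$; the identity then forces $N(x)N(y) < 0$, and one can check that the conditions $N(x)+N(y)=0$ and $F(x,y)=0$ can coexist only on the boundary of ``good for twisting'', producing no genuine well-rounded twist (consistent with the fact that no planar lattice is simultaneously orthogonal and hexagonal).
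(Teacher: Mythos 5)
Your proof is correct and follows essentially the same approach as the paper: for part (i), both proofs reduce to showing $F(x,y)\leq 0$ via the identity $\vol(\Lambda_\mathcal{I})^2 = Tr(x\bar{y})^2 - 4N(x)N(y)$. For part (ii), the paper packages the computation through the factorization $F = F_1F_2$ and the rearranged cosine formula (\ref{cos_of_ideal}), observing that $F_i(x,y)=0 \Leftrightarrow \cos\theta_{T_\alpha B}=\mp 1/2$, while you square the cosine formula directly and substitute the identity; these are the same calculation in different clothing. One small note on the degenerate case you flag: $Tr(x\bar{y})=0$ cannot actually occur for a twistable basis, since for twistable $B$ one has $\beta a^2 + c^2 = (a^2d^2-b^2c^2)/(a^2-b^2) > 0$ with $a^2d^2-b^2c^2 = (ad-bc)(ad+bc) = \pm Tr(x\bar{y})$, forcing $Tr(x\bar{y})\neq 0$; so the subtlety evaporates rather than needing the boundary argument you sketch.
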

\begin{proof}
For part (i), we must show that any basis $\{x,y\}$ that satisfies $N(x)+N(y)=0$ must also satisfy $F(x,y)\leq 0$, then the result follows immediately from equation (\ref{coseqn}) and Theorem \ref{norm_cond}.  To prove this, we use the identity
\begin{equation}
\vol(\Lambda_\mathcal{I})^2 = Tr(x\bar{y})^2 - 4N(x)N(y)
\end{equation}
If $N(y) = -N(x)$, this implies that $4N(x)^2 = \vol(\Lambda_\mathcal{I})^2 - Tr(x\bar{y})^2\leq \vol(\Lambda_\mathcal{I})^2$, so that $F(x,y) = N(x)^2- \vol(\Lambda_\mathcal{I})^2/4\leq 0$.

Part (ii) follows from (\ref{cos_of_ideal}), since $F(x,y) = 0$ if and only if one of the $F_i(x,y) = 0$ if and only if $\cos \theta_{T_\alpha B} = \pm 1/2$.
\end{proof}

\begin{prop}\label{orthog12}
Suppose that the orthogonal lattice is a twist of $\Lambda_K$.  Then $D\equiv 1,2\pmod 4$.
\end{prop}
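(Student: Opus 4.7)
The plan is to prove the contrapositive: assuming $D\equiv 3\pmod 4$, I want to show the orthogonal lattice cannot appear as a twist of $\Lambda_K$. By Proposition \ref{orth_and_hex}(i), it is enough to rule out the existence of a basis $\{x,y\}$ of $\mathcal{O}_K$ satisfying $N(x)+N(y)=0$. Note that $D\not\equiv 0\pmod 4$ automatically, since $D$ is squarefree, so this is the only case remaining to exclude.

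When $D\equiv 3\pmod 4$ we have $\mathcal{O}_K=\Z[\sqrt{D}]$, and I would write $x=a_1+b_1\sqrt{D}$, $y=a_2+b_2\sqrt{D}$ in the canonical basis. The condition that $\{x,y\}$ be a $\Z$-basis of $\mathcal{O}_K$ is equivalent to the determinant condition $a_1b_2-a_2b_1=\pm 1$, while $N(x)+N(y)=0$ becomes the Diophantine equation
\begin{equation}
a_1^2+a_2^2 \;=\; D\,(b_1^2+b_2^2).
\end{equation}
The strategy is to reduce this system modulo $4$ and obtain a parity contradiction with the determinant condition.

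Since squares mod $4$ lie in $\{0,1\}$, the left-hand side lies in $\{0,1,2\}\pmod 4$, and with $D\equiv 3\pmod 4$ the right-hand side lies in $\{0,3,2\}\pmod 4$. The case $b_1^2+b_2^2\equiv 1\pmod 4$ is impossible since it forces $a_1^2+a_2^2\equiv 3\pmod 4$. The case $b_1^2+b_2^2\equiv 0\pmod 4$ forces both $b_i$ even, hence $a_1^2+a_2^2\equiv 0\pmod 4$ so both $a_i$ are even as well. The case $b_1^2+b_2^2\equiv 2\pmod 4$ forces both $b_i$ odd and then both $a_i$ odd. In either surviving case the integers $a_1b_2$ and $a_2b_1$ have the same parity, so $a_1b_2-a_2b_1$ is even, contradicting $a_1b_2-a_2b_1=\pm 1$. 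The only real obstacle is making sure the integral basis is indeed $\{1,\sqrt{D}\}$ in this residue class; once that is invoked, the mod-$4$ bookkeeping does the rest.
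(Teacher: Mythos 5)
Your proof is correct. Both your argument and the paper's begin the same way: invoke Proposition~\ref{orth_and_hex}(i) to reduce to the existence of a $\Z$-basis $\{x,y\}$ of $\mathcal{O}_K=\Z[\sqrt{D}]$ (valid since $D\not\equiv 1\pmod 4$) with $N(x)+N(y)=0$, which unwinds to the Diophantine equation $a_1^2+a_2^2=D(b_1^2+b_2^2)$ together with the unimodularity condition $a_1b_2-a_2b_1=\pm1$. The two proofs then diverge in how they extract a contradiction. The paper works prime by prime: $\gcd(a_1,a_2)=1$ forces, for each $p\mid D$, that $-1$ is a quadratic residue mod $p$, hence $p=2$ or $p\equiv1\pmod4$, which (using that $D$ is squarefree) excludes $D\equiv3\pmod4$. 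You instead reduce the entire equation mod~$4$ and run a parity analysis: after eliminating the case $b_1^2+b_2^2\equiv1\pmod4$ outright, the remaining cases force $a_1,a_2,b_1,b_2$ to be all even or all odd, making $a_1b_2-a_2b_1$ even — contradicting $\pm1$. Your route is shorter and avoids quadratic reciprocity entirely, which is a pleasant simplification for this particular statement; the paper's quadratic-residue route has the side benefit of showing more, namely that every odd prime dividing $D$ is $\equiv1\pmod4$, information that the mod-$4$ argument alone does not reveal.
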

\begin{proof}
Suppose that $D\not\equiv1\pmod 4$.  A good basis $\{x,y\}$ will give an orthogonal twist of $\Lambda_K$ if and only if $N(x) + N(y) = 0$.  Expressing $x$ and $y$ as $x = a + c\sqrt{D}$ and $y = b + d\sqrt{D}$, we have $a^2 + b^2 = (c^2 + d^2)D$.  Since $x,y$ is a basis of $\mathcal{O}_K$, we must have $ad-bc=\pm1$ and in particular $\gcd(a,b) = 1$.  If $p|D$ then $a^2 + b^2 \equiv 0\pmod p$, and furthermore $p$ does not divide either $a$ or $b$.  Thus $(a/b)^2 \equiv -1\pmod p$, hence $-1$ is a quadratic residue mod $p$.  This forces $p = 2$ or $p\equiv 1\pmod 4$, hence $D\equiv 1,2\pmod 4$.
\end{proof}

It is natural to ask what, if any, connection there is between twisting an ideal lattice by an element of the diagonal group as we are doing, and twisting by a matrix of the form
\begin{equation}\label{alg_twist}
S_\gamma = \begin{bmatrix}
\sqrt{\gamma} & 0 \\
0 & \sqrt{\bar{\gamma}}
\end{bmatrix},\quad \gamma \in K,\ \text{$\gamma$ totally positive}
\end{equation}
as is often done, for example \cite{oggier_belfiore,eva_garden,eva_suarez} and other work by  Bayer-Fluckinger et al.  Essentially, for the purposes of understanding the set $w(\Lambda_\mathcal{I})$, these two notions of twisting are equivalent.

\begin{prop}
Let $\Lambda_\mathcal{I}$ be an ideal lattice.  Then the set of well-rounded lattices of the form $T_\alpha\cdot\Lambda_\mathcal{I}$ for $T_\alpha\in\mathcal{A}_2$ is in bijection with the set of well-rounded lattices of the form $S_\gamma\cdot\Lambda_\mathcal{I}$ for $S_\gamma$ as in (\ref{alg_twist}). 
\end{prop}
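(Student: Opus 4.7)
The plan is to establish the bijection at the level of similarity classes by showing directly that every $S_\gamma$-twist is a positive rescaling of some $T_\alpha$-twist, and conversely that every well-rounded $T_\alpha$-twist is similar to some $S_\gamma$-twist with $\gamma$ totally positive. In one direction, given $\gamma\in K$ totally positive, I would set $\alpha=(\gamma/\bar{\gamma})^{1/4}>0$ and verify by a direct matrix computation that
\begin{equation}
S_\gamma = \sqrt[4]{N(\gamma)}\,T_\alpha,
\end{equation}
so the lattices $S_\gamma\cdot\Lambda_\mathcal{I}$ and $T_\alpha\cdot\Lambda_\mathcal{I}$ differ by a positive scalar, hence lie in the same similarity class. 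In particular, one is well-rounded if and only if the other is, so every well-rounded $S_\gamma$-twist appears (up to similarity) as a well-rounded $T_\alpha$-twist.

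For the other direction, suppose $T_\alpha\cdot\Lambda_\mathcal{I}$ is well-rounded. By Theorem \ref{norm_cond}(i) there is a good basis $\{x,y\}$ of $\mathcal{I}$ for which $\alpha^4\in K^\times$; unwinding Proposition \ref{twisting_element} in the coordinates of the canonical embedding gives $\alpha^4=(\bar{y}^2-\bar{x}^2)/(x^2-y^2)$. A short Galois-conjugation computation then shows $N(\alpha^4)=1$, so Hilbert's Theorem 90 guarantees the existence of $\gamma\in K^\times$ with $\gamma/\bar{\gamma}=\alpha^4$. The explicit choice $\gamma = 1+\alpha^4$ works and is automatically totally positive, since $\alpha^4>0$ forces both $\gamma$ and $\bar{\gamma}=1+\alpha^{-4}$ to be positive. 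Combining with the previous step, $S_\gamma\cdot\Lambda_\mathcal{I}$ is similar to $T_\alpha\cdot\Lambda_\mathcal{I}$ and hence also well-rounded.

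The main obstacle is producing $\gamma$ inside $K$ rather than only inside $\R$, which amounts to Hilbert 90 applied to the norm-one element $\alpha^4\in K$ provided by Theorem \ref{norm_cond}(i); verifying $N(\alpha^4)=1$ is the computational heart of the argument. Once this is in hand, the sets of well-rounded twists of $\Lambda_\mathcal{I}$ via $\mathcal{A}_2$ and via matrices of the form (\ref{alg_twist}) coincide as subsets of $\mathcal{S}_2$, and the natural map $\gamma\mapsto(\gamma/\bar{\gamma})^{1/4}$ realizes the asserted bijection.
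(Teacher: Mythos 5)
Your proof is correct and follows the same overall strategy as the paper: the easy direction is the observation $S_\gamma = N(\gamma)^{1/4}T_\alpha$ with $\alpha = (\gamma/\bar\gamma)^{1/4}$, and the non-trivial direction extracts from a well-rounded twist $T_\alpha\Lambda_\mathcal{I}$ a good basis $\{x,y\}$ giving $\alpha^4 = (\bar{y}^2-\bar{x}^2)/(x^2-y^2)\in K^\times$ of norm $1$, from which one manufactures a suitable $\gamma$. The only difference is in how $\gamma$ is produced. The paper sets $\gamma = (\bar{y}^2-\bar{x}^2)/\sqrt{D}$, which works because $\sqrt{D}$ is anti-fixed by $\sigma$; this $\gamma$ is totally positive only in one of the two sign cases, forcing a short case split (replace $\gamma$ by $-\gamma$ in the other case). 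You instead invoke Hilbert~90 and take the standard cocycle solution $\gamma = 1 + \alpha^4$, which satisfies $\gamma/\bar\gamma = \alpha^4$ directly and is automatically totally positive because $\alpha^4 > 0$ forces both $\gamma = 1+\alpha^4 > 0$ and $\bar\gamma = 1+\alpha^{-4} > 0$. Your choice avoids the case split and makes the structural reason for the bijection (norm-one elements are Galois quotients) explicit, at the modest cost of invoking Hilbert~90 rather than writing down the answer by inspection. Both routes are fully rigorous; yours is marginally cleaner.
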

\begin{proof}
Any twisting matrix of the form (\ref{alg_twist}) can be scaled to be in the diagonal group, so any well-rounded twist of the form $S_\gamma\cdot \Lambda_\mathcal{I}$ clearly gives one of the form $T_\alpha \cdot \Lambda$.  To show the other inclusion, suppose that $T_\alpha$ is the twisting matrix associated with a good basis $\{x,y\}$ of $\mathcal{I}$, so that $\alpha = ((\bar{y}^2-\bar{x}^2)/(x^2-y^2))^{1/4}$.  Suppose first that $\bar{y}^2 - \bar{x}^2>0$ and $x^2 - y^2>0$.  Define $\gamma = (\bar{y}^2-\bar{x}^2)/\sqrt{D}$ which is totally positive and satisfies $\alpha = (\gamma/\bar{\gamma})^{1/4}$.  If $S_\gamma$ is as in (\ref{alg_twist}), then one computes easily that $S_\gamma\cdot \Lambda = N(\gamma)^{1/4}T_\alpha\cdot\Lambda$, hence these two lattices define the same element of $\mathcal{W}_2$.  The proof in the case $\bar{y}^2 - \bar{x}^2<0$ and $x^2 - y^2<0$ is similar.
\end{proof}

\subsection{Good Bases of $\Lambda_\mathcal{I}$, Units, and Principal Ideals}

Our goal is to compute, up to equivalence, all of the well-rounded twists of a given ideal lattice $\Lambda_\mathcal{I}$.  As the next proposition shows, Proposition \ref{norm_cond} allows us to discard some obvious transformations of good bases to make this computational task more tractable.

\begin{prop}\label{units_and_galois}
Let $B = \{x,y\}$ be a basis of $\mathcal{I}$ which is good for twisting. Then $uB$ is also good for twisting for any unit $u\in\mathcal{O}_K$, and $B$ and $uB$ are equivalent bases.  Similarly, the basis $\sigma(B) = \{\bar{x},\bar{y}\}$ of $\sigma(\mathcal{I})$ is good for twisting, and $F(\sigma(B)) = F(B)$.  In particular if $\sigma(\mathcal{I}) = \mathcal{I}$, then $B$ and $\sigma(B)$ are equivalent bases.
\end{prop}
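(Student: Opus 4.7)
The plan is to verify everything by direct computation, exploiting the fact that the invariant $F(B) = F(x,y) = N(x)^2 + N(x)N(y) + N(y)^2 - N(\mathcal{I})^2\Delta_K/4$ from equation (\ref{useful_poly}) depends on its inputs only through norms, and that both multiplication by a unit and application of $\sigma$ preserve norms.

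First I would handle the unit case. Since $u\in\mathcal{O}_K^\times$ satisfies $u\mathcal{O}_K=\mathcal{O}_K$, we have $u\mathcal{I}=\mathcal{I}$, so $\{ux,uy\}$ is again a $\Z$-basis of the same ideal $\mathcal{I}$; in particular $N(u\mathcal{I}) = N(\mathcal{I})$. Using $N(u)^2=1$, a one-line computation gives
\begin{equation}
F(ux,uy) = N(u)^2\bigl(N(x)^2 + N(x)N(y) + N(y)^2\bigr) - N(\mathcal{I})^2\Delta_K/4 = F(x,y).
\end{equation}
By Theorem \ref{norm_cond}(i), the condition $F(B)\leq 0$ is inherited by $uB$, so $uB$ is good for twisting; by Theorem \ref{norm_cond}(ii) the equality $F(uB) = F(B)$ gives $[T_{\alpha(uB)}\Lambda_{\mathcal{I}}] = [T_{\alpha(B)}\Lambda_{\mathcal{I}}]$, i.e.\ $B$ and $uB$ are equivalent.

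For the Galois case, the map $\sigma$ sends a $\Z$-basis of $\mathcal{I}$ to a $\Z$-basis of $\sigma(\mathcal{I})$, and since $\sigma$ is a norm-preserving ring automorphism, $N(\sigma(\mathcal{I})) = N(\mathcal{I})$ and $N(\bar{x}) = N(x)$, $N(\bar{y})=N(y)$. Substituting into (\ref{useful_poly}) yields $F(\sigma(B)) = F(B)$ at once. Hence $F(\sigma(B))\leq 0$, so $\sigma(B)$ is a good basis of the ideal $\sigma(\mathcal{I})$ by Theorem \ref{norm_cond}(i). In the special case $\sigma(\mathcal{I}) = \mathcal{I}$, the basis $\sigma(B)$ lives inside the same ideal as $B$, so the equivalence relation on bases of $\mathcal{I}$ applies, and $F(\sigma(B)) = F(B)$ together with Theorem \ref{norm_cond}(ii) gives that $B$ and $\sigma(B)$ are equivalent.

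There is essentially no technical obstacle here; the only subtlety worth flagging is bookkeeping about which ideal each basis belongs to. For the unit statement one must note $u\mathcal{I}=\mathcal{I}$ so that $uB$ really is a competing basis of the same ideal (and the equivalence relation even makes sense between $B$ and $uB$); for $\sigma$ the basis $\sigma(B)$ generally belongs to the companion ideal $\sigma(\mathcal{I})$, and only when $\sigma(\mathcal{I}) = \mathcal{I}$ does the claim of equivalence of bases of a single ideal apply.
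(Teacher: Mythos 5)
Your proof is correct and follows the same approach as the paper, which simply observes that $F(uB) = F(B)$ and $F(\sigma(B)) = F(B)$ and then invokes Theorem \ref{norm_cond}. You have spelled out the short computations the paper leaves implicit, and your careful note about which ideal each basis belongs to is sound bookkeeping that the paper elides.
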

\begin{proof}
One calculates easily that $F(uB) = F(B)$ and $F(\sigma(B)) = F(B)$.  The statements now follow easily from Theorem \ref{norm_cond}.
\end{proof}

The above proposition suggests that our problem of describing $w(\Lambda_\mathcal{I})$ reduces to studying principal ideals within $\mathcal{I}$.  More precisely, if we fix an element $x\in \mathcal{I}$ and consider all good bases of the form $\{x,y\}$ of $\mathcal{I}$, then the set of resulting equivalence classes only depends on $(x)$.  Moreover, if $\mathcal{I}$ is fixed by the Galois action, for example when $\mathcal{I} = \mathcal{O}_K$, then the problem reduces to studying principal ideals within $\mathcal{I}$ up to Galois conjugation.

\begin{definition}
Let $x\in \mathcal{I}$.  We say that $x$ \emph{extends to a (good) basis} if there exists $y\in \mathcal{I}$ such that $\{x,y\}$ is a (good) basis of $\mathcal{I}$.
\end{definition}

\begin{prop}\label{extendbasis}
For any non-zero $x\in \mathcal{I}$, $x$ extends to a basis of $\mathcal{I}$ if and only if the ideal $(x)$ is not divisible by any ideal of the form $n\mathcal{O}_K$ for $n\in \Z$, $n\neq\pm1$.
\end{prop}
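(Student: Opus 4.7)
The plan is to translate the statement into a primitivity claim for the coordinate vector of $x$ in a $\Z$-basis of $\mathcal{I}$. Since $\mathcal{I}$ is a free $\Z$-module of rank $2$, I would fix the canonical basis $\{a, b+d\omega\}$ from (\ref{ideal_can_basis}) and write $x = \alpha a + \beta(b+d\omega)$ for unique $\alpha,\beta\in\Z$. The key reduction is the elementary $\Z$-module fact (Smith normal form) that $x$ extends to a $\Z$-basis of $\mathcal{I}$ if and only if $\gcd(\alpha,\beta)=1$, so the whole problem becomes connecting this $\gcd$ condition to divisibility of $(x)$ by $n\mathcal{O}_K$.

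For the direction ``not extendable $\Rightarrow$ some $n\mathcal{O}_K$ divides $(x)$,'' I would argue by the contrapositive. Setting $g = \gcd(\alpha,\beta) > 1$, $\alpha = g\alpha'$, $\beta = g\beta'$, one obtains
\[
x = g\bigl(\alpha' a + \beta'(b+d\omega)\bigr),
\]
where the parenthesized quantity lies in $\mathcal{I}\subseteq\mathcal{O}_K$; hence $(x)\subseteq g\mathcal{O}_K$, so $g\mathcal{O}_K$ divides $(x)$. This direction is essentially formal.

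For the reverse direction, suppose $(x)\subseteq n\mathcal{O}_K$ with $n\neq\pm 1$, so $x=ny$ for some $y\in\mathcal{O}_K$. I would aim to show $y\in\mathcal{I}$, so that writing $y = \alpha' a + \beta'(b+d\omega)$ gives $\alpha = n\alpha'$ and $\beta = n\beta'$, forcing $n \mid \gcd(\alpha,\beta)$. Writing $y = p + q\omega$ with $p,q\in\Z$ and comparing with $x = (\alpha a + \beta b) + \beta d \omega$ yields $\beta d = nq$ and $\alpha a + \beta b = np$. From these relations, together with the canonical-basis conditions $d\mid a$, $d \mid b$, and $ad \mid N(b+d\omega)$, I would extract a common factor $n$ from $\alpha$ and $\beta$, completing the converse.

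The main obstacle is exactly this last step. The $\Z$-relations $n\mid\beta d$ and $n\mid\alpha a+\beta b$ do not automatically yield $n\mid\beta$ and $n\mid\alpha$ without further information on how $n$ interacts with $a$ and $d$. The delicate point is using that $\mathcal{I}$ is an \emph{ideal} of $\mathcal{O}_K$, not merely a sublattice, to force $y \in \mathcal{I}$; a clean route is to localize at each prime $p\mid n$ and exploit the $\mathcal{O}_K$-module structure of $\mathcal{I}$ together with the canonical basis prime-by-prime, which is the only place where the proof genuinely uses the ring-theoretic structure of $\mathcal{O}_K$ rather than just the additive structure of $\mathcal{I}$.
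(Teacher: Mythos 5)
Your reduction to a primitivity statement is the same one the paper uses: fix a $\Z$-basis $\{u,v\}$ of $\mathcal{I}$, write $x=\alpha u+\beta v$, and note that $x$ extends to a basis iff $\gcd(\alpha,\beta)=1$; and your forward direction (if $g=\gcd(\alpha,\beta)>1$ then $x=g(\alpha'u+\beta'v)$ with the parenthesized element in $\mathcal{O}_K$, so $(x)\subseteq g\mathcal{O}_K$) is fine. The step you flag as the main obstacle, however, cannot be carried out: the implication you would need, that $(x)\subseteq n\mathcal{O}_K$ forces $x/n\in\mathcal{I}$, is false, and in fact the proposition itself fails for general $\mathcal{I}$. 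Take $K=\Q(\sqrt{5})$ and $\mathcal{I}=(\sqrt{5})$, a prime ideal of norm $5$ whose canonical basis is $\{5,\ 2+\omega\}$. Then $x=5$ is the first vector of this basis, so it manifestly extends to a basis of $\mathcal{I}$, yet $(x)=5\mathcal{O}_K$ is divisible by $5\mathcal{O}_K$; here $x/5=1\in\mathcal{O}_K$ but $1\notin\mathcal{I}$. No localization argument can repair this, because the equivalence you are trying to establish is simply not true.

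What Smith normal form actually gives is: $x$ extends to a basis of $\mathcal{I}$ iff $x/n\notin\mathcal{I}$ for every integer $n\neq\pm1$, i.e., $(x)\not\subseteq n\mathcal{I}$ --- equivalently, the cofactor ideal $\mathcal{J}$ with $\mathcal{I}\mathcal{J}=(x)$ is not divisible by any $n\mathcal{O}_K$. Replacing $n\mathcal{I}$ by $n\mathcal{O}_K$ is valid precisely when $\mathcal{I}=\mathcal{O}_K$, which is the case in the main downstream application (e.g.\ Theorem~\ref{estimate}). The paper's own proof makes the same silent jump from ``$\gcd(\alpha,\beta)=1$'' to the stated divisibility condition without any argument, so the obstacle you ran into reflects a genuine gap in the proposition as written rather than a deficiency in your write-up; you were right to be suspicious, and in fact more suspicious than you realized. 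Once the condition is corrected to $n\mathcal{I}$ (or stated in terms of the cofactor ideal $\mathcal{J}$), the equivalence with $\gcd(\alpha,\beta)=1$ is immediate and no delicate step remains.
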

\begin{proof}
Let $\{u,v\}$ be any basis of $\mathcal{I}$ and write $x = au + cv$ for $a,c\in \Z$.  Then $x$ extends to a basis if and only if there exists $y = bu + dv\in \mathcal{I}$ such that $ad-bc = \pm1$.  This occurs if and only if $\gcd(a,c) = 1$, which is equivalent to $(x)$ not being divisible by any ideal of the form $n\mathcal{O}_K$ where $n\neq \pm1$.
\end{proof}

From Theorem \ref{norm_cond} and Propositions \ref{units_and_galois} and \ref{extendbasis} we see that the following strategy suffices to compute all well-rounded twists of a given ideal lattice $\Lambda_\mathcal{I}$:
\begin{itemize}
\item[(i)]  First, we list all principal ideals $(x)\subseteq\mathcal{I}$ such that (i) $N(x)^2\leq N(\mathcal{I})^2\Delta_K/3$ and (ii) $(x)$ is not divisible by any $(n)$ with $n\neq \pm1$.  
\item[(ii)] Second, for each such $(x)$, we pick a specific generator $x$ and solve $F(x,y)\leq0$ for all possible $y$ such that $\{x,y\}$ is a basis of $\mathcal{I}$.  
\end{itemize}
The result is a list of all bases of $\mathcal{I}$, up to equivalence, which are good for twisting.

\section{Computing All Well-Rounded Twists of $\Lambda_\mathcal{I}$}\label{computing_good_bases}

\subsection{Computing all good bases of an ideal $\mathcal{I}$}

We devote this subsection to explicit computations of good bases of ideals $\mathcal{I}$.  The next theorem provides us with simple bounds on how many good bases an element $x\in \mathcal{I}$ can extend to, and also gives us an effective algorithm to compute these bases.

\begin{theorem}\label{thm2}
Let $\mathcal{I}$ be an ideal in the ring of integers of a real quadratic field and let $x\in \mathcal{I}$ be such that $N(x)^2\leq N(\mathcal{I})^2\Delta_K/3$.  Then $x$ extends to at most two good bases of $\mathcal{I}$, up to equivalence.
\end{theorem}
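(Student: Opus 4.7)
The plan is to fix any reference basis $\{x,y\}$ of $\mathcal{I}$ (if $x$ extends to no basis at all, the claim is vacuous) and parameterize every other basis of $\mathcal{I}$ containing $x$ as $\{x,\,\pm y + kx\}$ with $k\in\mathbb{Z}$, since the change-of-basis matrix must have determinant $\pm 1$. Because $F$ is invariant under negating basis vectors, the $F$-equivalence class of such a basis depends only on the integer $k$, and the theorem reduces to showing that the function $k\mapsto F(x,\,y+kx)$ takes at most two distinct non-positive values on $\mathbb{Z}$.

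The key reduction is to exploit the factorization $F = F_1 F_2$ from (\ref{f1})--(\ref{f2}). Using the identities $N(y+kx) = N(y) + k\,Tr(x\bar{y}) + k^2 N(x)$ and $Tr\bigl(x\,\overline{y+kx}\bigr) = Tr(x\bar{y}) + 2k N(x)$, a direct expansion gives
\begin{equation*}
F_1(x,\,y+kx) = N(x)\,k^{2} + \bigl(N(x) + Tr(x\bar{y})\bigr)\,k + F_1(x,y) =: P(k),
\end{equation*}
and the elementary identity $F_2(x,y) = F_1(x,\,y-x)$ upgrades this to the shift formula $F_2(x,\,y+kx) = P(k-1)$. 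Hence
\begin{equation*}
F(x,\,y+kx) = P(k)\,P(k-1),
\end{equation*}
the product of a quadratic and its integer shift. The discriminant of $P$ works out to $N(\mathcal{I})^{2}\Delta_K - 3 N(x)^{2}$, which is non-negative precisely by the hypothesis $N(x)^{2} \le N(\mathcal{I})^{2}\Delta_K/3$; so $P$ has real roots $r_1 \le r_2$.

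The remainder is a sign analysis of the quartic $P(k)P(k-1)$. Writing $P(k) = N(x)(k-r_1)(k-r_2)$, the set $\{k\in\mathbb{R} : P(k)P(k-1) \le 0\}$ consists of those $k$ at which exactly one of $P(k),\,P(k-1)$ is non-positive; a short case analysis distinguishing $r_2-r_1 \ge 1$ from $r_2-r_1 < 1$ (and handling the sign of $N(x)$) shows this set is the union of two closed intervals, each of length at most $1$, with all endpoints drawn from $\{r_1,\,r_1+1,\,r_2,\,r_2+1\}$. Any interval of length at most $1$ contains at most two integers, and if it contains two then both must be endpoints of the interval; at any such endpoint either $P(k) = 0$ or $P(k-1) = 0$, so $F(x,\,y+kx) = 0$ at each. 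Consequently each of the two intervals contributes at most one distinct value of $F$, giving the desired bound of two $F$-equivalence classes.

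The main obstacle, and the structural heart of the argument, is recognizing the clean shift identity $F_2(x,\,y+kx) = F_1(x,\,y+(k-1)x)$, which collapses what looks like an arbitrary quartic in $k$ into the symmetric product of two consecutively shifted quadratics; once this identity is in hand, the integer-counting step is elementary. Without it the two factors $F_1$ and $F_2$ would appear unrelated and the collapse to an at-most-two bound would not be visible.
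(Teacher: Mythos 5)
Your proof is correct, and it is organized more cleanly than the paper's, though both arguments share the same skeleton: work with the factorization $F = F_1 F_2$, regard the two factors as quadratics in a single integer parameter, observe that both quadratics share the non-negative discriminant $N(\mathcal{I})^2\Delta_K - 3N(x)^2$, and then count integer points in the resulting sign-change intervals, each of which has length bounded so as to contribute at most one equivalence class. What you do differently is twofold. First, you parameterize the bases containing $x$ intrinsically as $\{x,\,y+kx\}$, $k\in\Z$, rather than working in the coordinates of a fixed basis $\{u,v\}$ of $\mathcal{I}$; the paper's coordinate description forces a case split ($a=0$ versus $a\neq 0$) and, when $a\neq 0$, a side condition $b\equiv -c^{-1}\pmod{a}$ to pick out the integers $b$ that correspond to genuine bases. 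Your parameterization dissolves that entirely, since every $k\in\Z$ gives a basis. Second, and more substantively, you isolate the shift identity $F_2(x,\,y+kx) = F_1\bigl(x,\,y+(k-1)x\bigr)$, so that $F(x,\,y+kx)=P(k)P(k-1)$ is literally a quadratic times its unit translate. The paper's proof has this structure lurking in it (its $f_1$ and $f_2$ differ by exactly the right linear and constant terms), but it is never made explicit; your identity makes the interval geometry transparent at once, with the four endpoints automatically listed as $r_1,\,r_1+1,\,r_2,\,r_2+1$ and the hexagonal degenerate case (interval endpoints landing on integers) handled uniformly. The one minor imprecision is your phrase ``exactly one of $P(k),\,P(k-1)$ is non-positive,'' which should read ``the two have opposite (weak) signs,'' to include the points where both vanish; but this does not affect the argument. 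In short: same theorem, same key discriminant computation, but your route via the shift identity yields a genuinely tidier proof than the paper's.
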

\begin{proof}

Let us fix a basis $\{u,v\}$ of $\mathcal{I}$, for example, the canonical basis.  Express $x = au + cv$ in this basis for $a,c\in \Z$, and suppose $y = bu + dv$ for some $b,d\in \Z$ is such that $\{x,y\}$ is a good basis.  We will use the inequality $F(x,y)\leq 0$ and the equality $ad-bc = \pm1$ to solve for all possible $b$ and $d$.  We break the proof into two cases, according to whether $a = 0$ or $a\neq 0$.

\emph{Case 1: $a = 0$}.  If $a = 0$, then $ad-bc = \pm1$ implies that $b = \pm1$ and $c = \pm 1$, so $x = \pm v$ and $y = \pm u + dv$.  Then by possibly replacing $x$ by $-x$ and $y$ by $-y$, operations which do not change the equivalence class of the basis, we may assume without loss of generality that our basis $\{x,y\}$ is of the form $\{x,y\} = \{v,u+dv\}$.  We will show that there are at most two integers $d$ such that this is a good basis.

Let $f_i(d) = F_i(x,y)$, where $\{x,y\} = \{v,u+dv\}$ as above and $F_i(x,y)$ are as in (\ref{f1}) and (\ref{f2}).  By Theorem \ref{norm_cond}, we must find all $d$ such that the $f_i(d)$ have opposite signs, or such that at least one of them is zero.  The polynomials $f_i(d)$ are given by
\begin{equation}
f_i(d) = N(x)d^2  + (Tr(u\bar{v})\pm N(x))d + N(u) + N(x) \pm Tr(u\bar{v})/2
\end{equation}
where we choose the positive sign for $f_1(d)$.  The discriminant of these polynomials is the same, namely $\delta = \vol(\Lambda_\mathcal{I})^2 - 3N(x)^2\geq 0$, which is non-negative by assumption.  The roots $\beta_{i1}$ and $\beta_{i2}$ of $f_i(d)$ are given by
\begin{equation}
\beta_{11},\beta_{12} = \frac{-(Tr(u\bar{v})+N(x))\pm \sqrt{\delta}}{2N(x)},\quad \beta_{21},\beta_{22} = \frac{-(Tr(u\bar{v})-N(x))\pm \sqrt{\delta}}{2N(x)}
\end{equation}
where we choose the negative sign for $\beta_{i1}$.  Clearly, if $N(x)>0$ then $f_i(d)\leq0$ only on the interval $[\beta_{i1},\beta_{i2}]$.  Similarly if $N(x)<0$ then $f_i(d)\geq 0$ only on these intervals.

Now consider the intervals $J_1 = [\beta_{11},\beta_{21}]$ and $J_2 = [\beta_{12},\beta_{22}]$ which both have width one.  By the previous paragraph, if $f_1(d)\leq 0$ and $f_2(d)\geq 0$, or if $f_1(d)\geq 0$ and $f_2(d)\leq 0$, then we must have $d\in J_1\cup J_2$.  Since $J_1$ and $J_2$ both have width one, they contain at least one and at most two integers $d$.  If $J_i$ contains two integers $d$ then they are the endpoints of the interval, since it has width one.  But these endpoints are exactly the roots of $f_i(d)$, hence $F(x,y) = 0$ and both resulting well-rounded twists are hexagonal by Proposition \ref{orth_and_hex}.  Thus both intervals $J_1$ and $J_2$ each contribute at most one good basis of $\Lambda_\mathcal{I}$, up to similarity.

\emph{Case 2: $a\neq 0$}.    Suppose $x = au + cv$ with $a\neq 0$, and multiplying by $-1$ if necessary we may assume $a>0$.  We again explicitly calculate all $y = bu + dv\in \mathcal{I}$ such that $\{x,y\}$ is a good basis of $\mathcal{I}$, up to equivalence.  By possibly replacing $y$ with $-y$ we may assume $ad-bc = 1$, and solve for $d$ in terms of $b$ as $d = (1+bc)/a$.  Setting $f_i(b) = F_i(x,y)$, we wish to find all integers $b$ such that $d = (1+bc)/a\in \Z$, and that either $f_1(b)\leq 0$ and $f_2(b)\geq 0$, or $f_1(b)\geq 0$ and $f_2(b)\leq 0$.

The polynomials $f_i(b)$ are given by
\begin{equation}
\begin{aligned}
a^2 f_i(b) &= N(x)b^2 + (a(Tr(u\bar{v})\pm N(x)) + 2cN(v))b \\
&\phantom{aa}+ a^2(N(x)\pm Tr(u\bar{v})/2) + N(v)(1\pm ac)
\end{aligned}
\end{equation}
where we choose the positive sign for $f_1(b)$.  As in Case 1, we again find that the discriminant $\delta = \vol(\Lambda_\mathcal{I})^2 - 3N(x)^2\geq 0$ of both of these polynomials is the same.  The roots $\beta_{i1}$ and $\beta_{i2}$ of $f_i(b)$ are given by
\begin{align}
\beta_{11},\beta_{12} &= \frac{-(a(Tr(u\bar{v})+N(x))+2cN(v))\pm a\sqrt{\delta}}{2N(x)} \\
\beta_{21},\beta_{22} &= \frac{-(a(Tr(u\bar{v})-N(x))+2cN(v))\pm a\sqrt{\delta}}{2N(x)}
\end{align}
where we choose the negative sign for $\beta_{i1}$.  

As in Case 1, we define $J_1 = [\beta_{11},\beta_{21}]$ and $J_2 = [\beta_{12},\beta_{22}]$ and notice that $f_1$ and $f_2$ have opposite signs only inside the intervals $J_i$, and so any $b\in \Z$ such that $\{x,b+d\omega\}$ is a good basis must satisfy $b\in J_1\cup J_2$.  We will again show that each interval $J_i$ can contribute at most one good basis of $\Lambda_\mathcal{I}$, up to similarity.

The intervals $J_i$ are of width $a$, hence each contain at least $a$ and at most $a+1$ consecutive integers, the latter case occurring exactly when the endpoints themselves are integers.  First suppose that $c\neq 0$.  We wish to pick $b\in J_i$ such that $d = (1+bc)/a\in \Z$, or equivalently $b\equiv -c^{-1}\pmod a$.  As $\gcd(a,c) = 1$, the intervals $J_i$ contain at least one and at most two solutions to this congruence.  If the endpoints of the $J_i$ are not integers, we can therefore find exactly one integer $b\in J_i$ such that $d\in \Z$.  If the endpoints are integers, then again they both produce the hexagonal twist as in Case 1.  Thus each interval $J_i$ contributes at most one good basis, up to equivalence.  Now if $c = 0$, then without loss of generality we have $a = d = 1$.  In this case the intervals $J_i$ each have width $1$, and the same argument as above applies to prove that there exists either a unique $b\in J_i$ or two different $b\in J_i$ which produce equivalent twists.  This completes the proof of the theorem.
\end{proof}

\begin{theorem}\label{galois_fix}
Let $\mathcal{O}_K$ be the ring of integers of a real quadratic field $K$.   Let $x\in \mathcal{O}_K$ be such that $N(x)^2\leq \Delta_K/3$ and define $\mathcal{J} = (x)$.  If $\sigma(\mathcal{J}) = \mathcal{J}$ then $x$ extends to at most one good basis of $\mathcal{O}_K$, up to equivalence.
\end{theorem}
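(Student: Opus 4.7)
The plan is to combine Theorem \ref{thm2}, which limits us to at most two equivalence classes of good bases containing $x$, with a Galois-induced involution that interchanges the two candidate classes, forcing them to coincide.

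First, from $\sigma(\mathcal{J}) = \mathcal{J}$ I would deduce $\bar{x} = u x$ for some unit $u \in \mathcal{O}_K^{\times}$; taking norms forces $N(u) = 1$, so $u^{-1} = \bar{u}$. Given a good basis $\{x, y\}$ of $\mathcal{O}_K$, the identity $\{\bar{x}, \bar{y}\} = u \cdot \{x, u^{-1}\bar{y}\}$ combined with Proposition \ref{units_and_galois} shows that $\{x, u^{-1}\bar{y}\}$ is a good basis of $\mathcal{O}_K$ with the same $F$-value as $\{x, y\}$, hence equivalent to it. Thus the map $\Phi: y \mapsto u^{-1}\bar{y}$ is a well-defined involution on the set of completions of $x$ to a good basis of $\mathcal{O}_K$ (involution since $u\bar{u} = N(u) = 1$), and it preserves the $F$-value.

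Next, suppose for contradiction that $\{x, y_1\}$ and $\{x, y_2\}$ are two non-equivalent good bases. Writing $x = a + c\omega$ in the canonical basis, Case 1 of the proof of Theorem \ref{thm2} ($a = 0$) forces $x = \pm \omega$, which combined with $N(x)^2 \leq \Delta_K/3$ has no non-trivial solutions except in $K = \mathbb{Q}(\sqrt{5})$, where $\omega$ is a unit and the claim is verified by direct inspection, as in Example \ref{first_example}. So we may assume Case 2 ($a \neq 0$), where the two bases correspond to unique good integers $b_1 \in J_1$ and $b_2 \in J_2$ lying in the two disjoint width-$a$ intervals from Theorem \ref{thm2}. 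The key step is to show that $\Phi$, expressed as a map on the $b$-coordinate (after normalizing so that $ad - bc = 1$), is an affine involution of slope $-1$ that interchanges $J_1$ and $J_2$. Once established, since $\Phi$ also preserves the congruence $b \equiv -c^{-1} \pmod{a}$ required for $d \in \mathbb{Z}$ (because $\Phi$ sends bases of $\mathcal{O}_K$ to bases of $\mathcal{O}_K$), the image $\Phi(b_1)$ is the unique good integer in $J_2$, which equals $b_2$ up to the equivalence-preserving sign change. Combined with the $F$-invariance of $\Phi$, this forces $F(\{x, y_1\}) = F(\{x, y_2\})$, contradicting non-equivalence.

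The main obstacle is the explicit affine computation showing $\Phi$ swaps $J_1$ with $J_2$. One solves $\bar{x} = ux$ for the integer coordinates $p, q$ of $u = p + q\omega$ in terms of $a, c, D, N(x)$ using the unit condition $N(u) = 1$, then computes the image basis $\{x, u^{-1}\bar{y}\}$ in the canonical basis and normalizes the sign. The resulting map $\Phi^{\ast}: b \mapsto -b + C$, for a constant $C$ depending on $a, c, D$, interchanges the intervals precisely when $\beta_{11} + \beta_{22} = C = \beta_{12} + \beta_{21}$, which reduces to an algebraic identity between the coefficients coming from $u$ and the explicit formulas for the $\beta_{ij}$ from the proof of Theorem \ref{thm2}; the cases $D \equiv 1 \pmod 4$ and $D \not\equiv 1 \pmod 4$ require separate bookkeeping for the unit $u$ but reduce to the same conclusion.
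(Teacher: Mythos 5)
Your proposal is structurally identical to the paper's proof: you form the unit $u$ relating $x$ to $\bar{x}$, build the equivalence-preserving involution $\Phi: y \mapsto u^{-1}\bar{y}$ on the set of completions of $x$ (up to sign; the paper uses $y \mapsto -u\sigma(y)$, which differs only by a sign and by whether $u$ is $x/\bar{x}$ or its inverse), and then argue that on $b$-coordinates this becomes an affine involution of slope $-1$ interchanging the two intervals $J_1$ and $J_2$ from Theorem~\ref{thm2}, forcing the two candidate completions to be equivalent. Framing it as a contradiction argument versus the paper's direct argument is cosmetic, and your Case~1 ($a=0$) handling also lines up with the paper's, which simply replaces $x = \pm\omega$ by the unit-multiple $1$ when $D=5$.

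The one place you flag as a gap is in fact the entire computational content of the paper's proof: the paper verifies that the involution on $b$-coordinates is
\begin{equation*}
h(z) = -\frac{a\,Tr(\omega) + 2cN(\omega)}{N(x)} - z,
\end{equation*}
by computing $y' = -u\sigma(y) = b' + d'\omega$ explicitly and reading off $b' = h(b)$, and then checks $h(J_1) = J_2$ from the explicit root formulas for $\beta_{ij}$. So nothing in your plan would fail; you have correctly identified the involution, its properties, and the verification it requires, and that verification goes through exactly as you anticipate. To make the proposal a complete proof you would need to carry out that one algebraic identity, but the approach is sound and coincides with the paper's.
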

\begin{proof}
Express $x = a + c\omega$ in the canonical basis of $\mathcal{O}_K$.  If $a = 0$ then $x = \pm \omega$, and without loss of generality $x = \omega$ since we are only counting up to equivalence.  The inequality $N(x)^2\leq \Delta_K/3$ then forces $D = 5$ and hence $x = \omega = (1+\sqrt{5})/2$ is a unit in $\mathcal{O}_K$.  Replacing $x$ with $\omega^{-1}x = 1$ to obtain an equivalent basis, we reduce to the case $a\neq 0$.

As $\sigma(\mathcal{J}) = \mathcal{J}$, we must have $x = u\cdot \sigma(x)$ for $u = x/\sigma(x) \in \mathcal{O}_K^\times$.  Now let $y\in \mathcal{O}_K$ be any element such that $\{x,y\}$ is a good basis of $\mathcal{O}_K$.  We have the following sequence of equivalences:
\begin{equation}\label{galois_same}
\{x,y\}\sim \{u\cdot \sigma(x),u\cdot \sigma(y)\} = \{x,u\cdot\sigma(y)\}\sim \{x,-u\cdot\sigma(y)\}.
\end{equation}
If we let $y' = -u\cdot\sigma(y)$ then one checks that
\begin{equation}
y' = b' + d'\omega\quad\text{where}\quad b' = -\frac{aTr(\omega) + 2cN(\omega)}{N(x)} - b.
\end{equation}

Let $J_1$ and $J_2$ the intervals as in the $a\neq 0$ case of the proof of Theorem \ref{thm2}.  Consider the involution of $\R$ defined by
\begin{equation}
h(z) = -\frac{aTr(\omega) + 2cN(\omega)}{N(x)} - z.
\end{equation}
One checks that $h(J_1) = J_2$ hence the map $h$ is a bijective map from $J_1$ to $J_2$ which is its own inverse.  Clearly $h(b) = b'$ and hence $h(b') = b$.  Therefore $b\in J_1$ if and only if $b'\in J_2$, and similarly $b\in J_2$ if and only if $b'\in J_1$.  Thus $\{x,y\}$ and $\{x,y'\}$ are the two good bases as in the proof of statement (i).  But by (\ref{galois_same}), they are equivalent and we conclude that the ideal $\mathcal{J} = (x)$ only produces one good basis of $\mathcal{O}_K$, up to equivalence.
\end{proof}

The following corollary without the uniqueness statement appears in \cite[Examples following Corollary 3.2]{bayer_nebe}, wherein the authors construct these bases by hand.

\begin{corollary}\label{thm1}
There exists a unique good basis of $\mathcal{O}_K$ of the form $B = \{1,y\}$, up to equivalence.  The element $y$ is given by $y = b+\omega$ where $b = \lfloor \beta \rfloor$ and
\begin{equation}
\beta = \frac{1-Tr(\omega)-\sqrt{\Delta_K-3}}{2}.
\end{equation}
The minimal angle of the resulting well-rounded twist of $\Lambda_K$ is given by
\begin{equation}
\cos\theta_{T_\alpha B} = \left\{\begin{array}{cc}
\frac{b^2 + b + (5-D)/4}{2b + 1} & D\equiv 1\ (\text{mod } 4) \\
\frac{b^2+1-D}{2b} & D\not\equiv 1\ (\text{mod } 4)
\end{array}\right.
\end{equation}
\end{corollary}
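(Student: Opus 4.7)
The strategy is to combine Theorem~\ref{galois_fix} (for uniqueness) with Case 2 of the proof of Theorem~\ref{thm2} (to locate $b$ explicitly), and then evaluate Proposition~\ref{cos} to obtain the angle formula. Since $(1) = \mathcal{O}_K$ is manifestly fixed by $\sigma$ and $N(1)^2 = 1 \leq \Delta_K/3$ (as $\Delta_K \geq 5$ for every real quadratic field), Theorem~\ref{galois_fix} immediately guarantees that $x = 1$ extends to at most one good basis of $\mathcal{O}_K$ up to equivalence. For existence, I would note that any basis of $\mathcal{O}_K$ of the form $\{1,y\}$ must have $y = b + d\omega$ with $d = \pm1$ (the change-of-basis determinant from $\{1,\omega\}$ equals $d$), so after replacing $y$ by $-y$ we may take $y = b + \omega$ with $b \in \Z$.

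Next I would specialize Case 2 of the proof of Theorem~\ref{thm2} to the canonical basis $\{u,v\} = \{1,\omega\}$ with $a = 1$, $c = 0$. The polynomials $f_i(b) = F_i(1, b+\omega)$ simplify to
$$f_i(b) = b^2 + \bigl(Tr(\omega) \pm 1\bigr)\,b + \bigl(N(\omega) \pm Tr(\omega)/2 + 1\bigr),$$
and using the identity $Tr(\omega)^2 - 4N(\omega) = \Delta_K$ one finds that both have discriminant $\Delta_K - 3$. A direct substitution then shows that the upper endpoint $\beta_{21}$ of the interval $J_1 = [\beta_{11},\beta_{21}]$ is precisely the $\beta$ appearing in the statement. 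Since $J_1$ has width $1$, the integer $b = \lfloor \beta \rfloor$ always lies in $J_1$ (one checks $\beta - 1 \leq \lfloor\beta\rfloor \leq \beta$), and is the unique such integer unless $\beta \in \Z$, in which case both integer endpoints yield equivalent hexagonal twists (as handled already in the proof of Theorem~\ref{thm2}). Uniqueness from Theorem~\ref{galois_fix} then ensures that the analogous integer arising from $J_2$ produces an equivalent basis, so $b = \lfloor\beta\rfloor$ is the \emph{only} candidate up to equivalence.

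Finally, the angle formula follows by substituting $\{x,y\} = \{1, b+\omega\}$ into Proposition~\ref{cos}:
$$\cos\theta_{T_\alpha B} = \frac{1 + N(b+\omega)}{Tr(b + \bar\omega)},$$
and plugging in the standard values $Tr(\omega) = 1$, $N(\omega) = (1-D)/4$ in the case $D \equiv 1 \pmod 4$, and $Tr(\omega) = 0$, $N(\omega) = -D$ in the case $D \not\equiv 1 \pmod 4$, which produces the two displayed expressions. The one genuinely finicky step is the algebraic identification of the abstract upper endpoint $\beta_{21}$ from the Theorem~\ref{thm2} proof with the concrete $\beta$ in the statement; the rest is routine substitution and an appeal to the uniqueness already established.
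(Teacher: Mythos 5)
Your proposal is correct and follows the same route as the paper's (very terse) proof: uniqueness via Theorem~\ref{galois_fix} applied to $x = 1$, and the angle formula via direct substitution into Proposition~\ref{cos}. The paper leaves the existence of $b$ and the identification $b = \lfloor\beta\rfloor$ as an implicit consequence of Theorem~\ref{thm2}, whereas you usefully unwind the interval machinery from Case~2 of that proof (with $a = 1$, $c = 0$, so the congruence condition $d \in \Z$ is automatic) to exhibit $J_1 = [\beta - 1, \beta]$ explicitly and confirm that its upper endpoint $\beta_{21}$ coincides with the stated $\beta$. Your algebra checks out: with $\{u,v\} = \{1,\omega\}$ the polynomials $f_i$ have discriminant $Tr(\omega)^2 - 4N(\omega) - 3 = \Delta_K - 3$, the ordering $\beta_{11} < \beta_{21} < \beta_{12} < \beta_{22}$ holds whenever $\Delta_K > 4$, and the substitution into $\cos\theta_{T_\alpha B} = (1 + N(b+\omega))/Tr(b+\bar\omega)$ recovers the two displayed cases. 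So this is a correct proof, filling in detail the paper chose to compress rather than taking a different approach.
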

\begin{proof}
This follows by applying Theorem \ref{galois_fix} to the element $x = 1\in\mathcal{O}_K$.  The calculation of $\cos \theta_{T_\alpha B}$ is immediate from Proposition \ref{cos}.
\end{proof}

\begin{corollary}\label{can_basis_not_good}
The canonical basis $B = \{1,\omega\}$ of $\mathcal{O}_K$ is good for twisting if and only if $K = \Q(\sqrt{5})$.  
\end{corollary}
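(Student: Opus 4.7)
The plan is to apply Theorem \ref{norm_cond}(i) directly to the basis $B = \{1,\omega\}$. Since $\mathcal{I} = \mathcal{O}_K$ has $N(\mathcal{I}) = 1$, the polynomial of interest is
\begin{equation*}
F(1,\omega) = 1 + N(\omega) + N(\omega)^2 - \Delta_K/4,
\end{equation*}
and the canonical basis is good for twisting if and only if $F(1,\omega) \leq 0$. The whole problem then reduces to a routine polynomial inequality in $D$, handled by splitting into the two cases that determine $\omega$ and $\Delta_K$.

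In the case $D \equiv 1 \pmod 4$, I would substitute $N(\omega) = (1-D)/4$ and $\Delta_K = D$ and simplify, expecting to obtain something of the form $F(1,\omega) = (D-3)(D-7)/16$. The inequality $F(1,\omega)\leq 0$ then forces $3\leq D \leq 7$, and among square-free positive integers with $D\equiv 1\pmod 4$ the only admissible value is $D = 5$, which I would verify gives $F = -1/4 < 0$.

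In the case $D \not\equiv 1 \pmod 4$, I would substitute $N(\omega) = -D$ and $\Delta_K = 4D$ and simplify, expecting $F(1,\omega) = (D-1)^2$. Since square-free $D\geq 2$ in this case, this quantity is strictly positive, so $B$ is not good for twisting for any such $D$.

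Combining the two cases, the only $K$ for which $\{1,\omega\}$ is good for twisting is $K = \Q(\sqrt{5})$. There is no real obstacle here — the argument is a direct computation — but the only thing worth being careful about is keeping the two cases for $\omega$ and $\Delta_K$ straight and checking that the factorization $(D-3)(D-7)/16$ has no extra square-free solutions with $D\equiv 1\pmod 4$ in the admissible range.
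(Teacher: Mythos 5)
Your proposal is correct and follows essentially the same route as the paper: both reduce to the inequality $F(1,\omega)\leq 0$ via Theorem \ref{norm_cond} and split on $D\bmod 4$. The only cosmetic difference is that the paper analyzes the signs of the factors $F_1$ and $F_2$ separately (obtaining $3\leq D\leq 7$ from $(7-D)/4\geq 0\geq (3-D)/4$), whereas you expand $F$ directly to get the equivalent factorization $(D-3)(D-7)/16$; both computations and the final conclusion agree.
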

\begin{proof}
If $K = \Q(\sqrt{5})$, then Example \ref{first_example} showed that the canonical basis of $\mathcal{O}_K$ is good for twisting.  For the other direction, suppose that the canonical basis $B$ is good for twisting.  If $D\not\equiv 1$ (mod $4$), then $Tr(\omega) = 0$ and $F_1(x,y) = F_2(x,y) = 1 - D = 0$, which is clearly impossible.  If $D\equiv 1$ (mod $4$), then $N(\omega) = \frac{1-D}{4}$ and $Tr(\omega) = 1$.  We have $F_1(1,\omega) = (3-D)/4$, and hence we must have $F_1(1,\omega)\leq0$. The two inequalities $F_1(1,\omega)\leq 0$ and $F_2(1,\omega)\geq 0$ are easily seen to be equivalent to $3\leq D\leq 7$, hence $D = 5$ as claimed.
\end{proof}

We finish this subsection by demonstrating a connection with the regulator.  Recall that for a lattice $\Lambda\subseteq\R^2$, the set $w(\Lambda) = \gamma(\Lambda)\cap\mathcal{W}_2$ denotes the set of all (similarity classes of) well-rounded twists of $\Lambda$.

\begin{theorem}\label{estimate}
Ordering real quadratic fields by their discriminant, we have
\begin{equation}
|w(\Lambda_K)| \leq \frac{4R_K}{\sqrt{3}} + O\left(\Delta_K^{1/4}\right)
\end{equation}
as $\Delta_K\rightarrow\infty$.
\end{theorem}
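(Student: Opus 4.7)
The plan is to reduce $|w(\Lambda_K)|$ to a count of principal ideals of bounded norm, and then invoke a classical ideal-counting estimate.

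First, I would use the structural results of Sections \ref{wr_twists_ideal_lattices}--\ref{computing_good_bases} to bound $|w(\Lambda_K)|$ combinatorially. By Theorem \ref{norm_cond}, each equivalence class in $w(\Lambda_K)$ is represented by a good basis $\{x,y\}$ of $\mathcal{O}_K$, and part (iii) of that theorem forces $|N(x)|, |N(y)| \leq \sqrt{\Delta_K/3}$. Theorem \ref{thm2} then tells us that any fixed $x$ with $N(x)^2 \leq \Delta_K/3$ extends to at most two good bases up to equivalence, and Proposition \ref{units_and_galois} shows this count depends only on the principal ideal $(x)$. Combining these yields
\begin{equation*}
|w(\Lambda_K)| \leq 2 \cdot \#\{(x) \subseteq \mathcal{O}_K \text{ principal} : |N(x)| \leq \sqrt{\Delta_K/3}\}.
\end{equation*}

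Second, I would estimate the right-hand side using Landau's asymptotic for the ideal-counting function. For a real quadratic field, the number of integral ideals of norm at most $X$ satisfies
\begin{equation*}
\#\{\mathcal{J} \subseteq \mathcal{O}_K : N(\mathcal{J}) \leq X\} = \frac{2R_K h_K}{\sqrt{\Delta_K}}\,X + O(X^{1/2}),
\end{equation*}
derivable either from the residue of $\zeta_K$ at $s=1$ or from a direct lattice-point count in a fundamental domain for the totally positive units acting on the hyperbolic region $\{|uv|\leq X\}$. Restricting to the principal ideal class via the analogous Hecke-type equidistribution yields
\begin{equation*}
\#\{(x) : |N(x)| \leq X\} = \frac{2R_K}{\sqrt{\Delta_K}}\,X + O(X^{1/2}).
\end{equation*}
Substituting $X = \sqrt{\Delta_K/3}$ gives $\frac{2R_K}{\sqrt{3}} + O(\Delta_K^{1/4})$, and doubling delivers the claimed bound.

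The main obstacle I expect is controlling the implicit constant in the $O(X^{1/2})$ error uniformly as $\Delta_K \to \infty$, since a constant growing even like $\Delta_K^{\epsilon}$ would degrade the stated asymptotic. For real quadratic fields this can be handled by an explicit geometric argument: one counts points of $\Lambda_K$ inside $\{(u,v) \in \R^2 : |uv| \leq \sqrt{\Delta_K/3}\}$ truncated to a fundamental domain of ``width'' proportional to $R_K$ cut out by the log-embedding of the totally positive units, and bounds the boundary error by (perimeter)$/\vol(\Lambda_K)$. Alternatively, one may cite a uniform version of Landau's theorem from the literature, so that the entire argument reduces to the combinatorial bookkeeping of the first paragraph.
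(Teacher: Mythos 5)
Your approach is essentially the same as the paper's: both reduce $|w(\Lambda_K)|$ to $2\cdot\#\{(x)\subseteq\mathcal{O}_K : |N(x)|\leq\sqrt{\Delta_K/3}\}$ via Theorem \ref{thm2}, and then invoke the ideal-counting asymptotic for a single ideal class (the paper cites the Lang/Class-Number-Formula version, you cite Landau, but these are the same theorem) at $X=\sqrt{\Delta_K/3}$. Your concern about the uniformity of the implied constant as $\Delta_K\to\infty$ is a fair observation, but the paper's proof simply cites the classical $O(X^{1/2})$ error term without addressing that dependence either, so this does not represent a divergence between the two arguments.
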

\begin{proof}
Let us define $\mathcal{P}_K$ to be the set of principal ideals contained in $\mathcal{O}_K$ which can possibly yield good bases:
\begin{equation}
\mathcal{P}_K = \left\{
(x)\subseteq \mathcal{O}_K: |N(x)|\leq \sqrt{\Delta_K/3},\ n\mathcal{O}_K\nmid(x) \text{ for $n\neq \pm1$}
\right\}.
\end{equation}
Since the process for calculating $w(\Lambda_K)$ outlined in Theorem \ref{thm2} assigns to every element of $\mathcal{P}_K$ at most two good bases, we have the obvious formula
\begin{equation}\label{obvious_bound}
|w(\Lambda_K)|\leq 2\cdot |\mathcal{P}_K|.
\end{equation}
Ignoring the divisibility condition in the definition of $\mathcal{P}_K$, we have by the Class Number Formula \cite[Section VI.3, Theorem 3]{lang_ant} that
\begin{equation}
|\mathcal{P}_K| \leq \frac{4R_K}{2\sqrt{\Delta_K}}\sqrt{\Delta_K/3} + O\left(\Delta_K^{1/4}\right) = \frac{2R_K}{\sqrt{3}} + O\left(\Delta_K^{1/4}\right)
\end{equation}
which when combined with (\ref{obvious_bound}) gives the result.
\end{proof}

\subsection{An Example}

\begin{example} We compute all of the well-rounded twists of $\Lambda_K$ for $K = \Q(\sqrt{201})$.  We begin by considering the principal ideal $\mathcal{J}_2 = (129-17\omega)$ which has $N(\mathcal{J}_2) = 2$.  We show explicitly how to extend $x = 129-17\omega$ to all possible good bases $\{x,y = b+d\omega\}$.  We have $d = (1+bc)/a = (1-17b)/129$ and therefore $y = b + \frac{1-17b}{129}\omega$.  The polynomials $f_i(b) = F_i(x,y)$ are given by
\begin{equation}
\begin{aligned}
f_1(b) &= \frac{1}{129^2}(-2b^2 + 1571b + 169277/2)\\
f_2(b) &= \frac{1}{129^2}(-2b^2 + 2087b - 302605/2)
\end{aligned}
\end{equation}
and thus the intervals $J_1$ and $J_2$ are given by
\begin{equation}
\begin{aligned}
J_1 &= \left[\frac{1571-387\sqrt{21}}{4},\frac{2087-387\sqrt{21}}{4}\right]\\
J_2 &= \left[\frac{1571+387\sqrt{21}}{4},\frac{2087+387\sqrt{21}}{4}\right]
\end{aligned}
\end{equation}
which both have width exactly $a = 129$.  There exists exactly one integer $b$ in each of $J_1$ and $J_2$ such that $d = (1+bc)/a\in \Z$, namely $b = 38$ and $b = 941$, respectively.  These values of $b$ give us the following exhaustive list of all good bases (up to equivalence) corresponding to $\mathcal{J}_2$:
\begin{equation}
B_1(\mathcal{J}_2) = \{129-17\omega, 38-5\omega\},\quad B_2(\mathcal{J}_2) = \{129-17\omega, 941-124\omega\}.
\end{equation}

We now follow this same algorithm for all principal ideals up to Galois conjugation with norm $\leq \lfloor \sqrt{\Delta_K/3}\rfloor = 8$, generated by elements $x = a + c\omega$ with $\gcd(a,c) =1 $.  We obtain all good bases of $\mathcal{O}_K$ up to equivalence, collected in Table \ref{q201table}.  In the left column, we list all principal ideals of norm $\leq 8$ up to Galois conjugation which are not divisible by any integer $>1$, in the center column the resulting good bases, and in the right column the cosine of the angle between the basis vectors of the resulting well-rounded twist.
\begin{table}
\centering
\begin{tabular}{|c|c|c|}
\hline
$\mathcal{J}_N = (x)$ & good bases $B_i(\mathcal{J}_N)$ & $|\cos\theta_i|$ \\
\hline\hline
\multirow{2}{*}{$\mathcal{J}_1 = (1)$} & $\{1,8-\omega\}$ & $7/15$ \\ \cline{2-3}
& $-$ & $-$ \\
\hline
\multirow{2}{*}{$\mathcal{J}_2 = (129-17\omega)$} & $\{129-17\omega, 38-5\omega\}$ & $1/3$ \\ \cline{2-3}
& $\{129-17\omega, 941-124\omega\}$ & $2/13$ \\
\hline
\multirow{2}{*}{$\mathcal{J}_3 = (941-124\omega)$} & $\sim B_1(\mathcal{J}_2)$ & $1/3$ \\ \cline{2-3}
& $-$ & $-$ \\
\hline
\multirow{2}{*}{$\mathcal{J}_4 = (38-5\omega)$} & $\{38-5\omega,15-2\omega\}$ & $1/11$ \\ \cline{2-3}
& $\sim B_2(\mathcal{J}_2)$ & $2/13$ \\
\hline
\multirow{2}{*}{$\mathcal{J}_5 = (13+2\omega)$} & $\{13+2\omega,33+5\omega\}$ & $1/9$ \\ \cline{2-3}
& $\sim B_1(\mathcal{J}_4)$ & $1/11$ \\ 
\hline
\multirow{2}{*}{$\mathcal{J}_6 = (8-\omega)$} & $\sim B_1(\mathcal{J}_1)$ & $7/15$ \\ \cline{2-3}
& $\sim B_1(\mathcal{J}_5)$ & $1/9$ \\ 
\hline
\multirow{1}{*}{$\mathcal{J}_8 = (6+\omega)$} & $-$ & $-$ \\
\hline
\end{tabular}
\caption{All good bases, up to equivalence, of the lattice $\Lambda_K$ for $K = \Q(\sqrt{201})$.  Here the ideal $\mathcal{J}_N$ has norm $N$.  For brevity we define $\theta_i = \theta_{T_\alpha B_i(\mathcal{J}_N)}$ to be the minimal angle of the resulting well-rounded twist.  }\label{q201table}
\end{table}

Since $\mathcal{J}_1$ and $\mathcal{J}_3$ are fixed by the Galois action (the ideal $\mathcal{J}_3$ being a prime above $3$, which ramifies), they only each produce one good basis by part (ii) of Theorem \ref{thm2}.  Several good bases appear twice in Table \ref{q201table}, since if $\{x,y\}$ is a good basis then clearly so is $\{y,x\}$, and thus running the above procedure for both of the principal ideals $(y)$ and $(x)$ will output the same good basis twice, up to equivalence.

The principal ideal $\mathcal{J}_8$ fails to produce any well-rounded bases because the intervals $J_1$ and $J_2$ constructed in the proof of Theorem \ref{thm2} overlap.  Thus the intervals on which the $f_i(b)$ have opposite signs have width less than $a$, and hence a $b\in \Z$ such that the $f_i(b)$ have opposite signs and $d = (1+bc)/a$ is an integer is not guaranteed to exist.
\end{example}

\section{An Infinite Family of $\Lambda_K$ with a Unique Well-Rounded Twist}\label{ortho_family}

In this section, we construct an infinite family of lattices of the form $\Lambda_K$ such that the orthogonal lattice is the \emph{only} well-rounded twist of $\Lambda_K$.  The next result can also be easily deduced from \cite[Examples following Corollary 3.2, and Theorems 3.2 and 3.3]{bayer_nebe}, though we present our own proof here to demonstrate the utility of our approach.  

\begin{lemma}\label{orthogonal_basis}
The well-rounded twist of $\Lambda_K$ corresponding to the good basis $\{1,b+\omega\}$ of Corollary \ref{thm1} is orthogonal if and only if either: (i) $D = s^2 + 1$ for some $s\in \Z$ and $D\equiv 2\pmod 4$, or (ii) $D = s^2 + 4$ for some $s\in \Z$ and $D\equiv 1\pmod 4$.
\end{lemma}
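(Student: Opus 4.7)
The plan is to combine Proposition \ref{orth_and_hex}(i) with the explicit form of the good basis from Corollary \ref{thm1}. By that proposition, the twist $T_\alpha\Lambda_K$ corresponding to $B=\{1,b+\omega\}$ is orthogonal precisely when $N(1)+N(b+\omega)=0$, i.e., when $N(b+\omega)=-1$. Equivalently, the explicit formula for $\cos\theta_{T_\alpha B}$ in Corollary \ref{thm1} vanishes iff its numerator does, producing the equation $b^2+1-D=0$ when $D\not\equiv 1\pmod 4$ and the equation $b^2+b+(5-D)/4=0$ when $D\equiv 1\pmod 4$.

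For the forward direction, I would assume the twist is orthogonal and split on the residue of $D$ modulo $4$. When $D\not\equiv 1\pmod 4$, the first equation gives $D=b^2+1$; squarefreeness of $D$ together with $D\not\equiv 1\pmod 4$ forces $b$ to be odd, whence $D\equiv 2\pmod 4$, and $s:=b$ witnesses case (i). When $D\equiv 1\pmod 4$, the second equation gives $D=4b^2+4b+5=(2b+1)^2+4$, so $s:=2b+1$ witnesses case (ii).

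For the converse, I would show that when $D$ has one of the two prescribed forms, the specific integer $b=\lfloor\beta\rfloor$ from Corollary \ref{thm1} actually solves the relevant equation. Substituting $D$ into $\beta=(1-Tr(\omega)-\sqrt{\Delta_K-3})/2$ and taking $s>0$ without loss of generality: in case (i) one obtains $\beta=(1-\sqrt{4s^2+1})/2$, and the elementary sandwich $2s<\sqrt{4s^2+1}<2s+1$ yields $\lfloor\beta\rfloor=-s$; in case (ii) one obtains $\beta=-\sqrt{s^2+1}/2$, and using that $s$ must be odd (forced by $D\equiv 1\pmod 4$ and $D$ squarefree), the sandwich $s<\sqrt{s^2+1}<s+1$ yields $\lfloor\beta\rfloor=-(s+1)/2$. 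A direct substitution into the relevant displayed equation then confirms $N(b+\omega)=-1$ in each case.

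The argument is essentially mechanical, so the main obstacle is mild: it consists of the two floor computations for $\beta$, both of which reduce to the elementary inequalities above. The parity constraint on $s$ in case (ii) should be verified explicitly so that $(s+1)/2$ is actually an integer, and a similar sanity check confirms that $s$ must be odd in case (i) (making $D$ indeed congruent to $2$ mod $4$). Everything else is routine substitution in the formulas already recorded in Corollary \ref{thm1} and Proposition \ref{orth_and_hex}.
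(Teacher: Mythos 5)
Your proposal is correct and follows essentially the same route as the paper: reduce orthogonality of the twist to the vanishing of the numerator of $\cos\theta_{T_\alpha B}$ from Corollary \ref{thm1}, obtain the Diophantine equations $D=b^2+1$ and $D=(2b+1)^2+4$ respectively, and then verify by an elementary sandwich inequality that the specific $b=\lfloor\beta\rfloor$ in fact satisfies them when $D$ has the stated form. The only cosmetic differences are that the paper invokes Proposition \ref{orthog12} to obtain $D\equiv 2\pmod 4$ directly rather than reading the parity off $D=b^2+1$, and bounds $|\beta-\lfloor\beta\rfloor|$ rather than writing out the explicit sandwich; your parity remark about $s$ being odd in case (ii) so that $(s+1)/2\in\Z$ is the same check the paper makes. (Incidentally, in the forward direction of case (i) squarefreeness is not actually needed: $D=b^2+1$ forces $D\equiv 1$ or $2\pmod 4$, and $D\not\equiv 1$ already gives $D\equiv 2$.)
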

\begin{proof}
Suppose first that $D\not\equiv1\pmod4$, so that $Tr(\omega) = 0$ and $\Delta_K = 4D$, and $D\equiv 2\pmod{4}$ by Proposition \ref{orthog12}.  By Corollary \ref{thm1}, the resulting well-rounded twist corresponding to the good basis $\{1,b+\omega\}$ is orthogonal if and only if $D = b^2 + 1$, where
\begin{equation}
b = \lfloor \beta\rfloor\quad\text{and}\quad\beta=\frac{1-\sqrt{4D-3}}{2}.
\end{equation}
Equivalently, the twisted lattice is orthogonal if and only if $D-1$ is a square in $\Z$ and $\lfloor\beta\rfloor = -\sqrt{D-1}$.  One verifies easily that for any $D$, we have $|\beta - (-\sqrt{D-1})|<1$, hence if $D-1$ is a square it follows that $\lfloor \beta\rfloor = -\sqrt{D-1}$.  Thus for $D\not\equiv1\pmod 4$, the resulting well-rounded twist from the good basis $\{1,b+\omega\}$ is orthogonal if and only if $D = s^2 + 1$.

When $D\equiv1\pmod 4$, the proof is similar.  From Corollary \ref{thm1} we see that $\cos\theta_{T_\alpha B}=0$ if and only if $b = \lfloor\beta\rfloor = \frac{1}{2}(-1-\sqrt{D-4})$.  Again, one verifies that $|\beta - \frac{1}{2}(-1-\sqrt{D-4})|<1$ for any $D$, and that for any $D\equiv1\pmod 4$ such that $D-4$ is a square in $\Z$ we also have $\frac{1}{2}(-1-\sqrt{D-4})\in \Z$.  We conclude that the twisted lattice is orthogonal if and only if $D-4$ is a square in $\Z$ and $D = s^2 + 4$ for some $s\in \Z$.
\end{proof}

The following result and subsequent proof is a mild generalization of a result appearing in \cite{hasse_chowla} by Ankeny, Chowla, and Hasse, to whom the result was originally communicated by Davenport.

\begin{lemma}\label{no_small_norms}
Let $D$ be a positive square-free integer such that either (i) $D = s^2 + 1$ for some $s\in \Z$ and $D\equiv 2\pmod 4$, or (ii) $D = s^2+4$ for some $s\in\Z$ and $D \equiv 1\pmod 4$.  For $K = \Q(\sqrt{D})$ and $x\in \mathcal{O}_K$, either
\begin{equation}
|N(x)|>\sqrt{\Delta_K/3}\quad \text{or}\quad x = \varepsilon n
\end{equation}
for $\varepsilon\in \mathcal{O}_K^\times$ and some $n\in \Z$.
\end{lemma}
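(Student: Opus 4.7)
The plan is to reduce to a ``primitive'' case and then show that primitive elements of small norm in $\mathcal{O}_K$ must in fact be units. Writing $x = a + b\omega$ and $d = \gcd(a,b)$, I would set $x' := x/d \in \mathcal{O}_K$, which has coprime coefficients in the canonical basis and satisfies $|N(x')| = |N(x)|/d^2 \le \sqrt{\Delta_K/3}$. A direct calculation (in the spirit of the proof of Proposition \ref{extendbasis}) shows that a primitive element is of the form $\varepsilon n$ with $\varepsilon \in \mathcal{O}_K^\times$ and $n \in \Z$ only when $n = \pm 1$, i.e.\ only when it is itself a unit. Hence it would suffice to prove that every primitive $x' \in \mathcal{O}_K$ with $0 < |N(x')| \le \sqrt{\Delta_K/3}$ is a unit, since then $x = \varepsilon d$ has the required form.

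For this step I would use the explicit form of the fundamental unit: $\varepsilon_1 = s + \sqrt{D}$ in case (i) and $\varepsilon_1 = (s + \sqrt{D})/2$ in case (ii), both satisfying $N(\varepsilon_1) = -1$. Multiplying $x'$ by an appropriate $\pm\varepsilon_1^k$ preserves both primitivity and $|N|$, so I may assume $x' > 0$ and $1 \le x'/|\sigma(x')| < \varepsilon_1^2$. This yields
\[ \sqrt{|N(x')|} \le x' < \varepsilon_1\sqrt{|N(x')|}, \qquad \sqrt{|N(x')|}/\varepsilon_1 < |\sigma(x')| \le \sqrt{|N(x')|}. \]
Writing $x' = a + b\omega$, the quantity $|x' - \sigma(x')|$ is an explicit multiple of $|b|\sqrt{D}$, and combined with $|N(x')| \le \sqrt{\Delta_K/3}$ this confines $|b|$ to a small finite set.

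The remainder is a finite arithmetic check. For $b = 0$, primitivity forces $x' = \pm 1$, a unit. For $b = \pm 1$ in case (i), the identity $a^2 - (s^2+1) = (a-s)(a+s) - 1$ shows that the only integer $a$ with $|a^2 - D| \le 2\sqrt{D/3}$ is $a = \pm s$, yielding $|N(x')| = 1$; any other $a$ gives $|N(x')| \ge 2s$, and $2s > 2\sqrt{D/3}$ is equivalent to $3s^2 > s^2 + 1$, valid for all $s \ge 1$. For $|b| \ge 2$ the congruence $D \equiv 2 \pmod 4$ forces $N(x') \not\equiv 0 \pmod 4$ on primitive norms, and this combined with the factorization $N(x') = (a - sb)(a + sb) - b^2$ and the unit-normalized size bounds rules out every value in $\{2,3,\ldots,\lfloor 2\sqrt{D/3}\rfloor\}$. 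Case (ii) is treated analogously, using the identity $(2a+b)^2 - Db^2 = 4\,N(a + b\omega)$ and the congruence $D \equiv 5 \pmod 8$ (since $s$ is odd). I expect the main obstacle to be this last step for $|b| \ge 2$: although the unit-normalization confines $|b|$ only to $O(D^{1/4})$, verifying that no non-unit norm in the allowed range actually occurs requires either explicit modular case-splitting or an appeal to the fact that $\sqrt{s^2+1}$ has the simple continued fraction $[s;\overline{2s}]$, whose convergents coincide with powers of $\varepsilon_1$ and hence with units.
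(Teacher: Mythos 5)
Your opening reduction to the primitive case, the observation that the fundamental units in cases (i) and (ii) are $s+\sqrt{D}$ and $(s+\sqrt{D})/2$ with norm $-1$, and the unit-normalization step are all sound.  But the plan stalls exactly where you flag it, and that gap is the whole lemma.  Unit-normalization only confines $|b|$ to a window of size on the order of $D^{1/4}$, so there is no bounded finite check to perform; for each $|b|\ge 2$ in that window you must still rule out every candidate value of $|N(x')|$ in $(1,\sqrt{\Delta_K/3}]$, and the parity observation $N(x')\not\equiv 0\pmod 4$ together with the factorization $(a-sb)(a+sb)-b^2$ does not visibly achieve this.  The continued-fraction fallback you mention is also not immediate: Legendre's theorem ties primitive solutions of $|p^2-Dq^2|<\sqrt{D}$ to convergents of $[s;\overline{2s}]$, but the lemma's threshold in case (i) is $\sqrt{\Delta_K/3}=2\sqrt{D/3}\approx 1.15\sqrt{D}$, so intermediate fractions lying in the band $(\sqrt{D},\,2\sqrt{D/3}]$ would still need to be excluded by hand.

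The paper closes this gap by a short descent on associates, which is the key idea your proposal is missing.  For case (i) it simply cites the lemma (due to Davenport) recorded in Ankeny--Chowla--Hasse \cite{hasse_chowla}; for case (ii), writing $s=2t+1$ and $\varepsilon=t+\omega$, one takes $x$ not associate to a rational integer and chooses, among all its associates, the representative whose $\bar\omega$-coefficient $c$ is positive and minimal.  Multiplying by $\varepsilon$ produces another associate with $\omega$-coefficient $a-c(t+1)$, and minimality of $c$ forces $0<c\le a-c(t+1)$, hence $a\ge c(t+2)$.  That inequality feeds directly into $N(x)>3c^2(t+1)\ge 3(t+1)$, and squaring gives $N(x)^2>9(t+1)^2>\Delta_K/3$, with no case split on $b$ at all.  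If you wish to salvage your computational route, the descent-on-associates inequality is in effect what you would have to re-derive to make the finite check go through.
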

\begin{proof}
The result for $D\equiv 2\pmod{4}$ is \cite[Lemma]{hasse_chowla}, thus we may assume $D\equiv 1\pmod{4}$.  We will use a similar argument to prove the result for $D = s^2 + 4$, when $D\equiv 1 \pmod 4$.  Firstly, we note that $s=2t+1$ must be odd, else $D$ is divisible by $4$.  Note that we can take $t\geq0$.  Let us take $\varepsilon = t + \omega$, and choose $x = a - c\bar\omega$ not associate to any integer, such that among all associates of $x$, $c$ is positive and minimal.  Computing the associate $\varepsilon x$ of $x$ yields
\begin{equation}
\varepsilon x = at + cN(\omega)  + (a-c(t+1))\omega
\end{equation}
and replacing $\varepsilon$ with $-\varepsilon$ if necessary, we are free to assume $a-c(t+1)>0$.  By the minimality of $c$ we therefore have $0<c\leq a-c(t+1)$, and hence $0<c(t+2)\leq a$, so that in particular $a>0$ and $a^2\geq c^2(t+2)^2$.  We now use these inequalities to bound $N(x)$ below by
\begin{equation}
N(x) = a^2 + ac + c^2N(\omega) > c^2(t+2)^2 + c^2N(\omega) = 3c^2(t+1)\geq 3(t+1)
\end{equation}
Squaring both sides yields
\begin{equation}
N(x)^2 > 9(t+1)^2 > (4t^2 + 4t + 5)/3 = \Delta_K/3
\end{equation}
which is what we wanted to prove.
\end{proof}

We recall an elementary lower bound on the regulator of a real quadratic field.  We refer to \cite[Section 1]{regulator_bound} for a proof of the case $D\equiv 1\pmod{4}$, and the case $D\not\equiv1\pmod{4}$ is proved in an almost identical manner.  
\begin{lemma}\label{lemma_regulator_bound}
If $R_K$ is the regulator of $K = \Q(\sqrt{D})$, then we have
\begin{equation}\label{regulator_bound}
R_K \geq \left\{\begin{array}{ll}
\log\left|\frac{1}{2}\left(\sqrt{D-4}+\sqrt{D}\right)\right| &\text{if $D\equiv 1\pmod 4$} \\
\log|\sqrt{D-1}+\sqrt{D}| & \text{if $D\not\equiv1\pmod4$}
\end{array}\right.
\end{equation}
and we have equality if and only if $D$ is of the form $D = s^2 +4$ in the case $D\equiv 1\pmod{4}$, or $D = s^2 + 1$ in the case $D\not\equiv1\pmod{4}$.
\end{lemma}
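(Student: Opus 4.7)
The plan is to convert the lower bound on $R_K$ into a lower bound on the fundamental unit $\varepsilon_K > 1$ via the identity $R_K = \log \varepsilon_K$, and then to minimize $\varepsilon_K$ directly using the unit norm equation $|N(\varepsilon)| = 1$. By monotonicity of $\log$, both the inequality and the equality characterization follow from the corresponding statements for $\varepsilon_K$ itself.

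First I would handle the case $D\not\equiv 1\pmod 4$. A unit has the form $\varepsilon = u + v\sqrt{D}$ with $u,v\in\Z$ and $u^2 - v^2 D = \pm 1$, and for the fundamental unit $\varepsilon_K > 1$ one checks that $u,v$ may be taken strictly positive (the case $v = 0$ gives only $\pm 1$, and $u \leq 0$ would force $\varepsilon_K \leq 1$). For $v = 1$ the norm equation forces $u \geq \sqrt{D-1}$, hence $\varepsilon \geq \sqrt{D-1}+\sqrt{D}$. For $v\geq 2$ one obtains $\varepsilon \geq \sqrt{4D-1}+2\sqrt{D}$, which strictly exceeds $\sqrt{D-1}+\sqrt{D}$. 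Therefore the minimum is realized at $v = 1$ and $u^2 = D-1$, which is attainable iff $D = s^2+1$ for some $s\in\Z$.

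The case $D\equiv 1\pmod 4$ proceeds identically: a unit now has the form $\varepsilon = \tfrac{a+b\sqrt{D}}{2}$ with $a,b\in\Z$ of the same parity and $a^2 - b^2 D = \pm 4$. The same reduction first to $b\geq 1$ and then to the optimum at $b = 1$ yields the lower bound $\tfrac{1}{2}(\sqrt{D-4}+\sqrt{D})$, with equality iff $D = s^2+4$. Taking logarithms of both bounds gives the lemma.

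I do not foresee a genuine obstacle; the only step that calls for a moment of care is verifying that units with $v \geq 2$ (respectively $b \geq 2$) are strictly larger than the candidate optima at $v = 1$ (respectively $b = 1$), so that the minimization over $v$ (respectively $b$) really restricts to that case. This reduces to an elementary comparison of surds.
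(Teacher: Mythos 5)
Your proof is correct and is the standard direct argument: write the fundamental unit as $u+v\sqrt{D}$ (resp.\ $\tfrac{a+b\sqrt{D}}{2}$), note $u,v>0$ (resp.\ $a,b>0$) since $\varepsilon_K>1>|\bar\varepsilon_K|$, and minimize over $v$ (resp.\ $b$) using the norm equation, with $v=1$, $u^2=D-1$ (resp.\ $b=1$, $a^2=D-4$) as the unique minimizer. The paper does not present its own proof but defers to a cited reference for the $D\equiv 1\pmod 4$ case and asserts the other case is analogous; your argument is self-contained and almost certainly coincides with what that reference does, so there is no substantive difference in approach. The only detail worth spelling out fully is the positivity of $u,v$ (which follows from $2u=\varepsilon_K+\bar\varepsilon_K>0$ and $2v\sqrt{D}=\varepsilon_K-\bar\varepsilon_K>0$) and, in the $D\equiv 1\pmod 4$ case, the observation that $D=s^2+4$ with $D\equiv 1\pmod 4$ forces $s$ odd, so $\tfrac{s+\sqrt{D}}{2}$ is genuinely an algebraic integer; both are routine.
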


We can now state and prove the main theorem of this section.  

\begin{theorem}\label{big_theorem}
Let $K$ be a real quadratic field and $\Lambda_K$ the lattice given by the canonical embedding of its ring of integers.  Then the following are equivalent:
\begin{itemize}
\item[(i)] The only well-rounded twist of $\Lambda_K$ is the orthogonal lattice.
\item[(ii)] $D=s^2+1$ for some $s\in \Z$ and $D\equiv 2$ (mod $4$), or $D=s^2 + 4$ for some $s\in \Z$ and $D\equiv 1$ (mod $4$).
\item[(iii)] The regulator $R_K$ of $K$ meets the lower bound (\ref{regulator_bound}) with equality.
\end{itemize}
\end{theorem}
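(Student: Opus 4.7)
The plan is to establish the equivalences as a cycle, since (ii) $\Leftrightarrow$ (iii) is essentially immediate from Lemma~\ref{lemma_regulator_bound} (which characterizes equality in the regulator bound exactly by the two arithmetic conditions appearing in (ii)). So the real content is (i) $\Leftrightarrow$ (ii), and I would dispose of (ii) $\Leftrightarrow$ (iii) in a single sentence before turning to that.

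For (i) $\Rightarrow$ (ii), I would simply invoke Corollary~\ref{thm1}: the element $1\in\mathcal{O}_K$ always extends to a good basis $\{1,b+\omega\}$. This basis yields a well-rounded twist of $\Lambda_K$, and the hypothesis (i) says that any such twist is orthogonal. Applying the ``only if'' direction of Lemma~\ref{orthogonal_basis} to this particular basis directly produces condition (ii). This is essentially a one-line argument once Corollary~\ref{thm1} and Lemma~\ref{orthogonal_basis} are on the table.

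The substantive direction is (ii) $\Rightarrow$ (i). Existence of an orthogonal twist under (ii) is delivered by the ``if'' direction of Lemma~\ref{orthogonal_basis}; what must be shown is uniqueness. So let $\{x,y\}$ be an arbitrary good basis of $\mathcal{O}_K$. Theorem~\ref{norm_cond}(iii) forces $|N(x)|\leq \sqrt{\Delta_K/3}$, and Lemma~\ref{no_small_norms}, which is precisely tailored to the arithmetic condition (ii), then forces $x=\varepsilon n$ for some unit $\varepsilon\in\mathcal{O}_K^\times$ and some $n\in\Z$. Proposition~\ref{units_and_galois} lets me replace $\{x,y\}$ by the equivalent basis $\{n,\varepsilon^{-1}y\}$, so I may assume $x=n\in\Z$; Proposition~\ref{extendbasis} then forces $n=\pm1$, hence $x=\pm1$, and after a final sign change I may take $x=1$. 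Since the principal ideal $(1)=\mathcal{O}_K$ is obviously Galois-stable, Theorem~\ref{galois_fix} shows that $x=1$ extends to at most one good basis up to equivalence. Thus every good basis of $\mathcal{O}_K$ is equivalent to the one from Corollary~\ref{thm1}, whose twist is orthogonal by Lemma~\ref{orthogonal_basis}.

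The main obstacle is not really present in the proof itself: all of the genuine work sits in the supporting lemmas, in particular Lemma~\ref{no_small_norms} (the generalized Ankeny–Chowla–Hasse estimate, which is the key arithmetic input powering uniqueness) and Theorem~\ref{galois_fix} (which converts the Galois-stability of $(1)$ into a uniqueness statement for good bases). What requires care in writing up the proof is tracking the chain of reductions $\{x,y\}\leadsto\{n,\varepsilon^{-1}y\}\leadsto\{1,\tilde y\}$ while checking at each step that the equivalence class of the basis, and hence the similarity class of the resulting well-rounded twist, is preserved — but this is purely bookkeeping given Propositions~\ref{units_and_galois} and~\ref{extendbasis}.
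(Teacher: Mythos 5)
Your proposal is correct and takes essentially the same route as the paper: (ii)~$\Leftrightarrow$~(iii) by Lemma~\ref{lemma_regulator_bound}, (i)~$\Rightarrow$~(ii) via Corollary~\ref{thm1} and Lemma~\ref{orthogonal_basis}, and (ii)~$\Rightarrow$~(i) by combining the norm bound from Theorem~\ref{norm_cond} with Lemma~\ref{no_small_norms} to force $x$ to be a unit, then reducing to the case $x=1$. The only difference is expository: where the paper compresses the step ``$x=\varepsilon n$ plus $x$ extends to a basis implies $x$ is a unit'' and then invokes Corollary~\ref{thm1} for uniqueness, you spell out the use of Proposition~\ref{extendbasis} to get $n=\pm1$ and invoke Theorem~\ref{galois_fix} directly (of which Corollary~\ref{thm1} is the $x=1$ special case), which is a cleaner account of the same bookkeeping.
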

\begin{proof}
(i) $\Rightarrow$ (ii) Suppose that the only well-rounded twist of $\Lambda_K$ is the orthogonal lattice.  Since Corollary \ref{thm1} shows us that we always have a good basis of the form $\{1,y\}$, this basis must produce an orthogonal twist.  By Lemma \ref{orthogonal_basis}, $D$ is then of the stated form.

(ii) $\Rightarrow$ (i) Suppose that $D = s^2 + 1$ and $D\equiv 2\pmod 4$, or that $D = s^2 + 4$ and $D\equiv 1\pmod 4$.  Let $\{x,y\}$ be a good basis of $\Lambda_K$.  Then $|N(x)|\leq \sqrt{\Delta_K/3}$ by Proposition \ref{norm_cond}, which by Lemma \ref{no_small_norms} implies that $x = u$ for some unit $u\in \mathcal{O}_K^\times$.  Hence up to equivalence, the only good basis is the basis $\{1,y\}$ constructed in the proof of Theorem \ref{thm1}.  As we saw in Lemma \ref{orthogonal_basis}, the resulting well-rounded twist of the good basis $\{1,y\}$ is orthogonal.

(ii) $\Leftrightarrow$ (iii) This is contained in the statement of Lemma \ref{lemma_regulator_bound}.
\end{proof}

For example, in the $D\equiv 1\pmod 4$ case, setting $s = 1$ proves that the only well-rounded twist of the ring of integers of $\Q(\sqrt{5})$ is the orthogonal lattice, as suggested by Fig.\ \ref{TL_orbit}.


\begin{corollary}
There exists infinitely many real quadratic fields $K = \Q(\sqrt{D})$ for both $D\equiv 1\pmod 4$ and $D\equiv 2\pmod 4$, such that the only well-rounded twist of $\Lambda_K$ is the orthogonal lattice.
\end{corollary}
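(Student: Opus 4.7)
The plan is to invoke Theorem \ref{big_theorem} to reduce the corollary to a classical square-free sieve problem. By the equivalence of (i) and (ii) in that theorem, the statement is equivalent to the existence of infinitely many square-free $D\equiv 2\pmod{4}$ of the form $s^2+1$, and infinitely many square-free $D\equiv 1\pmod{4}$ of the form $s^2+4$. In either case, the residue condition on $D$ modulo $4$ forces $s$ to be odd: an even $s$ would give $s^2+1\equiv 1\pmod{4}$ and $s^2+4\equiv 0\pmod{4}$, neither of which is admissible. Thus it suffices, for each $c\in\{1,4\}$, to show that the set
\[
A_c=\{s\in\Z_{\geq 1}:s\text{ odd and } s^2+c\text{ square-free}\}
\]
is infinite.

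To prove $|A_c|=\infty$ I would apply a standard square-free sieve to the polynomial $f(s)=s^2+c$. For any odd prime $p$, the congruence $s^2\equiv -c\pmod{p^2}$ has at most two solutions modulo $p^2$, so the set of $s\in\Z$ with $p^2\mid f(s)$ has natural density at most $2/p^2$. The prime $p=2$ need not be considered: for odd $s$ one has $s^2\equiv 1\pmod 8$, so $s^2+1\equiv 2\pmod 8$ and $s^2+4\equiv 5\pmod 8$, neither of which is divisible by $4$. Consequently the density of odd $s$ for which $f(s)$ fails to be square-free is bounded above by
\[
\sum_{p\geq 3}\frac{2}{p^2} \;\leq\; 2\left(\frac{\pi^2}{6}-1-\frac{1}{4}\right)\;<\;1,
\]
so $A_c$ has strictly positive lower density and, in particular, is infinite.

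The only technical point to handle carefully is upgrading the heuristic densities $2/p^2$ to a rigorous bound on the number of bad $s\leq N$. The standard way is to split the primes into two ranges: for $p\leq\sqrt{N+c}$ the density bound gives at most $2N/p^2+O(1)$ bad values, while for primes with $p^2>N$ each such $p$ contributes at most two bad $s\leq N$, and the total number of such primes is $O(N/\log N)$, contributing only $o(N)$ bad values in aggregate. Combining these yields a bad count of $\bigl(\sum_{p\geq 3} 2/p^2+o(1)\bigr)N$, which is strictly less than $N$ for large $N$; this is in fact a special case of Estermann's theorem on square-free values of $s^2+c$, which one may simply cite. Applying the conclusion for $c=1$ and $c=4$ and feeding the resulting infinite families of square-free $D$ back through Theorem \ref{big_theorem} completes the proof.
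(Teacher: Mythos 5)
Your proof is correct and follows essentially the same route as the paper: reduce via Theorem~\ref{big_theorem}(ii) to the existence of infinitely many square-free values of $s^2+1$ (with $D\equiv 2\pmod 4$) and $s^2+4$ (with $D\equiv 1\pmod 4$), then invoke a square-free-values result for quadratic polynomials. The paper simply cites the literature for this fact (Ricci, with Heath-Brown for a modern account), whereas you additionally note that the residue conditions force $s$ to be odd and sketch the standard square-free sieve before citing Estermann --- a welcome expansion, but the same underlying argument.
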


\begin{proof}
According to Theorem \ref{big_theorem} (ii), we need only to show that there exists infinitely many square-free $D$ such that $D = s^2 + 1$ and $D\equiv 2\pmod 4$, or $D = s^2 + 4$ and $D\equiv 1\pmod 4$.  It is known that infinitely many such $D$ exist, and even form a subset of $\Z$ of positive density; see \cite{ricci} and also \cite[Introduction and Theorem 1]{heath_brown} for a modern summary of related results.
\end{proof}

We conclude this subsection by comparing our result with a result of Bayer-Fluckinger and Nebe \cite{bayer_nebe}:
\begin{theorem}[\cite{bayer_nebe}, Theorem 3.2] 
The orthogonal lattice is a twist of $\Lambda_K$ if and only if the fundamental unit of $\mathcal{O}_K$ has norm $-1$.
\end{theorem}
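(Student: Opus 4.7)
The plan is to first invoke Proposition~\ref{orth_and_hex}(i), which converts the existence of an orthogonal twist of $\Lambda_K$ into the existence of a $\mathbb{Z}$-basis $\{x,y\}$ of $\mathcal{O}_K$ with $N(x)+N(y)=0$.  Since $\mathcal{O}_K^\times=\{\pm\varepsilon_0^k:k\in\mathbb{Z}\}$ for the fundamental unit $\varepsilon_0$ and the norm is multiplicative, the norm map $\mathcal{O}_K^\times\to\{\pm1\}$ takes the value $-1$ if and only if $N(\varepsilon_0)=-1$.  So it suffices to prove the equivalence: $\mathcal{O}_K$ has a $\mathbb{Z}$-basis $\{x,y\}$ with $N(x)+N(y)=0$ iff some unit of $\mathcal{O}_K$ has norm $-1$.

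For the ``if'' direction, given a unit $\varepsilon\in\mathcal{O}_K^\times$ with $N(\varepsilon)=-1$, I would define the $\mathbb{Z}$-linear map $\phi:\mathcal{O}_K\to\mathcal{O}_K$, $\phi(u)=\varepsilon\bar u$, and observe that $\phi^2(u)=\varepsilon\,\overline{\varepsilon\bar u}=\varepsilon\bar\varepsilon\,u=N(\varepsilon)u=-u$, hence $\phi^2=-\mathrm{id}$.  This promotes $\mathcal{O}_K$ to a $\mathbb{Z}[i]$-module in which $i$ acts by $\phi$.  The module is torsion-free (if $z=a+bi\neq 0$ and $z\cdot u=0$, applying $z$ again gives $(a^2+b^2)u=0$ so $u=0$) and has $\mathbb{Z}[i]$-rank $1$ because its $\mathbb{Z}$-rank is $2$.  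Since $\mathbb{Z}[i]$ is a principal ideal domain, $\mathcal{O}_K$ is free of rank $1$ as a $\mathbb{Z}[i]$-module; picking any generator $u$ produces the $\mathbb{Z}$-basis $\{u,\varepsilon\bar u\}$ of $\mathcal{O}_K$, and $N(u)+N(\varepsilon\bar u)=N(u)\bigl(1+N(\varepsilon)\bigr)=0$ as required.

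For the ``only if'' direction, suppose $\{x,y\}$ is a $\mathbb{Z}$-basis with $N(x)=n>0$ and $N(y)=-n$ (after possibly swapping).  The strategy is to show the ideal equality $(\bar x)=(y)$; then $y=u\bar x$ for a unit $u$ with $N(u)=N(y)/N(\bar x)=-1$.  Since $\{x,y\}$ is a basis, $(x,y)=\mathcal{O}_K$, i.e.\ $\min(v_\mathfrak{p}(x),v_\mathfrak{p}(y))=0$ for every prime $\mathfrak{p}$ of $\mathcal{O}_K$.  For each rational prime $p\mid n$, if $p$ were inert or ramified the unique prime $\mathfrak{p}$ above $p$ would satisfy $\mathfrak{p}=\bar{\mathfrak{p}}$, and from $\mathfrak{p}\mid(x\bar x)$ we would deduce $\mathfrak{p}\mid(x)$, and likewise $\mathfrak{p}\mid(y)$, contradicting coprimality; so $p$ must split as $p\mathcal{O}_K=\mathfrak{p}\bar{\mathfrak{p}}$.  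The identities $v_\mathfrak{p}(x)+v_{\bar{\mathfrak{p}}}(x)=v_p(n)=v_\mathfrak{p}(y)+v_{\bar{\mathfrak{p}}}(y)$ combined with the coprimality condition then force (up to swapping $\mathfrak{p}\leftrightarrow\bar{\mathfrak{p}}$) $v_\mathfrak{p}(x)=v_{\bar{\mathfrak{p}}}(y)=v_p(n)$ and $v_{\bar{\mathfrak{p}}}(x)=v_\mathfrak{p}(y)=0$; matching valuations at every prime yields $(\bar x)=(y)$.

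The main obstacle is the ``if'' direction: producing an explicit basis from an abstract unit of norm $-1$ is not at all immediate, since the naive guess $\{1,\varepsilon\}$ fails whenever the $\omega$-coefficient of $\varepsilon$ is not $\pm 1$ (e.g.\ for $K=\mathbb{Q}(\sqrt{41})$ the fundamental unit is $\varepsilon=27+10\omega$).  The key insight is to recognize $u\mapsto\varepsilon\bar u$ as an integral complex structure on $\mathcal{O}_K$ and thereby to replace a delicate Diophantine search with the triviality of the Gaussian class group.  The ``only if'' direction is then a routine local analysis, the crucial point being that the basis condition $(x,y)=\mathcal{O}_K$ is strong enough to force $(x)$ and $(y)$ to be Galois conjugates of one another.
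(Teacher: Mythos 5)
The paper does not prove this statement at all: it appears at the end of Section~\ref{ortho_family} as a cited result of Bayer-Fluckinger and Nebe (\cite{bayer_nebe}, Theorem 3.2), included only to contrast with the paper's own Theorem~\ref{big_theorem}. There is therefore no in-paper proof to compare against, and the right question is simply whether your argument is correct. It is.

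Your reduction via Proposition~\ref{orth_and_hex}(i) to the existence of a $\mathbb{Z}$-basis $\{x,y\}$ of $\mathcal{O}_K$ with $N(x)+N(y)=0$ is exactly the right entry point, and the observation that ``some unit has norm $-1$'' is equivalent to ``the fundamental unit has norm $-1$'' is standard. The ``if'' direction is the genuinely nontrivial half, and your $\mathbb{Z}[i]$-module trick is both correct and elegant: $\phi(u)=\varepsilon\bar u$ squares to $N(\varepsilon)\cdot\mathrm{id}=-\mathrm{id}$ precisely because $N(\varepsilon)=-1$, making $\mathcal{O}_K$ a torsion-free $\mathbb{Z}[i]$-module of rank one, and freeness over the PID $\mathbb{Z}[i]$ produces a generator $u$ with $\{u,\varepsilon\bar u\}$ the required basis and $N(u)+N(\varepsilon\bar u)=N(u)(1+N(\varepsilon))=0$. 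This neatly sidesteps the obstacle you correctly identify, that $\{1,\varepsilon\}$ is almost never a basis. The ``only if'' direction is a clean local argument: from $(x)+(y)=\mathcal{O}_K$ (a consequence of $\{x,y\}$ being a $\mathbb{Z}$-basis) and $|N(x)|=|N(y)|=n$, inert primes are ruled out because $v_p(n)=2v_{\mathfrak{p}}(x)=2v_{\mathfrak{p}}(y)$ would force both valuations positive, ramified primes similarly, and at split primes the coprimality forces the supports of $(x)$ and $(y)$ onto opposite members of each conjugate pair $\{\mathfrak{p},\bar{\mathfrak{p}}\}$, giving $(\bar x)=(y)$ and hence a unit $u=y/\bar x$ of norm $N(y)/N(\bar x)=-1$. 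Both directions are complete and correct.
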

The difference between this result and Theorem \ref{big_theorem} can be illustrated by considering the field $K = \Q(\sqrt{17})$, whose geodesic $\gamma(\Lambda_K)$ is depicted in Fig.\ \ref{TL_orbit}.  This field has fundamental unit $\varepsilon = 4 + \sqrt{17}$ of norm $N(\varepsilon) = -1$, thus $\Lambda_K$ has an orthogonal twist.  On the other hand, $K$ clearly fails condition (ii) of Theorem \ref{big_theorem}, hence the orthogonal lattice is not the \emph{only} well-rounded twist of $\Lambda_K$.  And indeed, using our Theorem \ref{thm2} we obtain two good bases of $\mathcal{O}_K$, namely $B_1 = \{1,1+\omega\}$, which gives a well-rounded twist with $|\cos\theta| = 1/3$, and $B_2 = \{1+\omega,2+\omega\}$, which gives the orthogonal twist.


\section{Ideal Lattices from Markoff Numbers}\label{markoff_lattices}

\subsection{The Minimum Product Distance of an Ideal Lattice}

We now give a purely number-theoretic description of the minimum product distance of an ideal lattice.  

\begin{prop}\label{mpd}
Let $\Lambda_\mathcal{I}$ be an ideal lattice where $\mathcal{I}\subseteq\mathcal{O}_K$ for $K$ a totally real number field, scaled so that $\vol(\Lambda_\mathcal{I}) = 1$.  Then
\begin{align}
N(\Lambda_\mathcal{I}) = \min_{\substack{\mathcal{J}\subseteq \mathcal{O}_K \\ [\mathcal{J}] = [\mathcal{I}]^{-1}}} \frac{N(\mathcal{J})}{\sqrt{\Delta_K}}.
\end{align}
In particular, $N(\Lambda_\mathcal{I})$ depends only on the ideal class $[\mathcal{I}]\in C_K$, and $N(\Lambda_\mathcal{I}) = \frac{1}{\sqrt{\Delta_K}}$ for any principal ideal $\mathcal{I}\subseteq\mathcal{O}_K$.
\end{prop}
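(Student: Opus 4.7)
The plan is to reduce the minimum of $N$ over nonzero elements of $\mathcal{I}$ to a minimum over integral ideal norms in a fixed ideal class, and then correct by the scaling factor that normalizes $\vol(\Lambda_\mathcal{I}) = 1$. The key observation is that for $K$ totally real and $x \in K$, the canonical embedding satisfies $N(\psi(x)) = |N_{K/\mathbb{Q}}(x)|$, while for any nonzero $x \in \mathcal{O}_K$ one has $N((x)) = |N_{K/\mathbb{Q}}(x)|$ by multiplicativity of the ideal norm.

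First I would work with the unscaled lattice $\psi(\mathcal{I})$, which has covolume $N(\mathcal{I})\sqrt{\Delta_K}$. The main step is a correspondence between nonzero elements of $\mathcal{I}$ and integral ideals in the inverse class. For $0 \neq x \in \mathcal{I}$, the containment $(x) \subseteq \mathcal{I}$ means that $\mathcal{J} := (x)\mathcal{I}^{-1}$ is an integral ideal, and clearly $[\mathcal{J}] = [\mathcal{I}]^{-1}$. Conversely, given any integral $\mathcal{J}$ with $[\mathcal{J}] = [\mathcal{I}]^{-1}$, the product $\mathcal{I}\mathcal{J}$ is principal, say $\mathcal{I}\mathcal{J} = (x)$, and then $x \in \mathcal{I}\mathcal{J} \subseteq \mathcal{I}$ realizes $\mathcal{J}$. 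Multiplicativity of the ideal norm gives $|N_{K/\mathbb{Q}}(x)| = N((x)) = N(\mathcal{I})N(\mathcal{J})$, so
\begin{equation*}
\min_{0 \neq x \in \mathcal{I}} |N_{K/\mathbb{Q}}(x)| \;=\; N(\mathcal{I}) \cdot \min_{\substack{\mathcal{J} \subseteq \mathcal{O}_K \\ [\mathcal{J}] = [\mathcal{I}]^{-1}}} N(\mathcal{J}).
\end{equation*}

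Next I would track the scaling. Under $\Lambda \mapsto \lambda\Lambda$ with $\lambda > 0$, both $\vol$ and $N$ scale by $\lambda^n$ (since $N$ is a product of $n$ coordinates). Choosing $\lambda = (N(\mathcal{I})\sqrt{\Delta_K})^{-1/n}$ normalizes the volume to $1$, and substituting yields
\begin{equation*}
N(\Lambda_\mathcal{I}) \;=\; \frac{1}{N(\mathcal{I})\sqrt{\Delta_K}}\cdot N(\mathcal{I}) \cdot \min_{\substack{\mathcal{J} \subseteq \mathcal{O}_K \\ [\mathcal{J}] = [\mathcal{I}]^{-1}}} N(\mathcal{J}) \;=\; \min_{\substack{\mathcal{J} \subseteq \mathcal{O}_K \\ [\mathcal{J}] = [\mathcal{I}]^{-1}}} \frac{N(\mathcal{J})}{\sqrt{\Delta_K}},
\end{equation*}
which is the claimed formula. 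The right-hand side depends only on $[\mathcal{I}]^{-1}$, hence only on $[\mathcal{I}] \in C_K$; and for principal $\mathcal{I}$, the class $[\mathcal{I}]^{-1}$ is trivial, so the minimum is attained by $\mathcal{J} = \mathcal{O}_K$ with $N(\mathcal{O}_K) = 1$, giving $N(\Lambda_\mathcal{I}) = 1/\sqrt{\Delta_K}$.

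I do not expect a genuine obstacle here; the only care required is verifying that $\mathcal{J} = (x)\mathcal{I}^{-1}$ really is integral (which follows from $(x) \subseteq \mathcal{I}$, equivalently $\mathcal{I} \mid (x)$) and keeping the volume normalization straight, since both $N$ and $\vol$ scale by $\lambda^n$ rather than by $\lambda$.
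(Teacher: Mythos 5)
Your proof is correct and follows essentially the same route as the paper: the central step in both is the bijection between nonzero $x\in\mathcal{I}$ (equivalently, principal ideals $(x)\subseteq\mathcal{I}$) and integral ideals $\mathcal{J}$ in the class $[\mathcal{I}]^{-1}$ via $\mathcal{I}\mathcal{J}=(x)$, together with multiplicativity of the ideal norm. You simply spell out the covolume normalization $\lambda^n = (N(\mathcal{I})\sqrt{\Delta_K})^{-1}$ more explicitly than the paper does, but the argument is the same.
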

\begin{proof}
Recall that $(x)\subseteq \mathcal{I}$ if and only if there exists another ideal $\mathcal{J}$ such that $\mathcal{I}\cdot \mathcal{J} = (x)$.  We therefore get a bijection between norms of principal ideals contained in $\mathcal{I}$ and the set of all integers of the form  $N(\mathcal{I})N(\mathcal{J})$ where $[\mathcal{J}] = [\mathcal{I}]^{-1}$ in the class group $C_K$.  We compute that
\begin{align}
N(\Lambda_\mathcal{I}) = \frac{1}{N(\mathcal{I})\sqrt{\Delta_K}}\min_{0\neq x\in \Lambda_\mathcal{I}} |N(x)| = \min_{\substack{\mathcal{J}\subseteq \mathcal{O}_K \\ [\mathcal{J}] = [\mathcal{I}]^{-1}}} \frac{N(\mathcal{J})}{\sqrt{\Delta_K}}
\end{align}
which is what was claimed.
\end{proof}

The well-rounded twists of the lattices $\Lambda_K$ considered in Theorem \ref{big_theorem} thus produce an infinite family of well-rounded lattices $\Lambda$ such that 
\begin{itemize}
\item[(i)] $\cos\theta = 0$, and 
\item[(ii)] $N(\Lambda) = N(\Lambda_K)= \frac{1}{\sqrt{\Delta_K}}\rightarrow 0$ as $\Delta_K\rightarrow \infty$.  
\end{itemize}
Thus while interesting from a number-theoretic perspective, this family of ideal lattices is, in some sense, the worst possible family to consider for the purposes of making both the sphere-packing radius and minimum product distance large.

More generally, if one only considers ideal lattices arising from principal ideals as is usually done in e.g.\ \cite{oggier_belfiore}, one is only ever able to construct infinite families of lattices $\Lambda$ such that $N(\Lambda)\rightarrow0$.  Thus the consideration of $\Lambda_\mathcal{I}$ for non-principal $\mathcal{I}$ is necessary to construct infinite families of lattices with $N(\Lambda)$ bounded below by a positive constant.

In \cite{srinivasan}, A.\ Srinivasan proves the following theorem, which improves on the classical Minkowski bound for real quadratic fields.

\begin{theorem}[\cite{srinivasan}, Theorems 1.1 and 1.2]\label{improve_mink}
Let $K$ be a real quadratic field with discriminant $\Delta_K$ and class group $C_K$.  Then for all ideal classes $[\mathcal{J}]\in C_K$, there exists a representative $\mathcal{I}\in [\mathcal{J}]$ such that 
\begin{equation}
N(\mathcal{I})\leq S_K:=1 + \left\lfloor \frac{\sqrt{\Delta_K}}{3}\right\rfloor.
\end{equation}
Conversely, there exist infinitely many real quadratic fields $K$ which contain ideal classes $[\mathcal{J}]$ whose least norm representative satisfies $N(\mathcal{I}) = S_K$.
\end{theorem}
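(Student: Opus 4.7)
The plan is to treat the two assertions of the theorem separately. For the \emph{upper bound} $N(\mathcal{I})\leq S_K$, Minkowski's classical bound for a real quadratic field already yields a representative with $N(\mathcal{I})\leq \sqrt{\Delta_K}/2$, so the real task is to sharpen the constant from $1/2$ to $1/3$. The natural tool is the bijection between ideal classes and $SL_2(\Z)$-equivalence classes of primitive indefinite binary quadratic forms of discriminant $\Delta_K$: an ideal with canonical basis $\{a, b+d\omega\}$ corresponds to a form $Q_\mathcal{I}(X,Y)$ whose positive values are precisely the norms $N(\mathcal{J})$ for $\mathcal{J}\in [\mathcal{I}]^{-1}$, so the existence of a small-norm representative amounts to showing that $Q_\mathcal{I}$ represents a positive integer $\leq S_K$.

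First I would apply the continued-fraction reduction theory for indefinite forms to pass to a reduced representative of the class. Then I would argue by contradiction: if a reduced form represents no positive integer in the range $[1, S_K]$, its coefficients are so constrained that either the discriminant identity $b^2 - 4ac = \Delta_K$ fails for integers, or one of the neighbors in the reduction cycle produces a strictly smaller value. The extra $+1$ in $S_K = 1 + \lfloor\sqrt{\Delta_K}/3\rfloor$ is there precisely to absorb the rounding that occurs in this case analysis. The sharp constant $1/3$ (rather than $1/2$) forces one to examine the values of $Q_\mathcal{I}$ at \emph{two} distinct primitive lattice points simultaneously, rather than a single minimizing point as in the usual Minkowski argument; this two-point refinement is the heart of the proof.

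For the \emph{converse}, I would exhibit an explicit infinite family of real quadratic fields $K$ in which the bound $S_K$ is attained. A natural strategy is to arrange that every rational prime $p < S_K$ is inert in $\mathcal{O}_K$, which forces any nonprincipal ideal class to lack a representative of norm smaller than $S_K$. Imposing $\left(\frac{D}{p}\right) = -1$ for all such small primes cuts out a positive density of valid $D$ via Dirichlet's theorem on primes in arithmetic progressions. To guarantee that $S_K$ is actually achieved rather than merely bounded, I would further arrange that $S_K$ (or the appropriate nearby integer) splits in $\mathcal{O}_K$, producing an ideal of exactly this norm whose class must be nonprincipal by construction.

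The main obstacle, to my mind, is obtaining the sharp $1/3$ constant in the first part: a single-point geometry-of-numbers argument only gives $\sqrt{\Delta_K}/2$, and the improvement requires the delicate two-point analysis on reduced indefinite forms together with a case split that produces the floor and the $+1$. For the converse, the density argument simultaneously imposes several constraints on $D$ (inertness of all primes below $S_K$, splitting of $S_K$, square-freeness), and verifying that these are mutually compatible for infinitely many $D$ requires standard but non-trivial analytic input, as the number of constraints grows with $\sqrt{\Delta_K}$.
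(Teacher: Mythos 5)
This theorem is cited from Srinivasan's paper (\cite{srinivasan}, Theorems 1.1 and 1.2); the present paper supplies no proof of its own, so there is no ``paper's proof'' to compare against directly. That said, the surrounding text (Section~\ref{markoff_lattices}, in particular Theorem~\ref{markoff_mpd} and the construction of the ideals $\mathcal{I}_c$) makes clear that Srinivasan's converse is established via Markoff numbers: one takes $D = 9c^2 - 4$ for an odd Markoff number $c$ and exhibits an explicit ideal $\mathcal{I}_c$ of norm $c = S_K$ whose class is shown to contain no smaller-norm representative. This is a concrete, family-by-family construction, not a density argument.

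Your sketch of the upper bound is in the right spirit --- passing to the norm form, reducing it, and exploiting the structure of the reduction cycle is indeed how one sharpens Minkowski's $\sqrt{\Delta_K}/2$ --- but the crucial mechanism producing the specific constant $1/3$ and the extra $+1$ is left as an assertion rather than an argument. What actually drives the $1/3$ is the Markoff spectrum of indefinite binary quadratic forms: the worst-case reduction cycles are exactly those attached to Markoff forms, and the $1 + \lfloor \cdot \rfloor$ packaging is what handles the discrete extremal cases. Without engaging with that structure, the ``two-point refinement'' remains a placeholder.

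The converse as you propose it does not work. Requiring every rational prime $p < S_K \approx \sqrt{\Delta_K}/3$ to be inert in $\mathcal{O}_K$ imposes roughly $\pi(\sqrt{\Delta_K}/3)$ independent quadratic-residue conditions on $D$, and by quadratic reciprocity and CRT the density of admissible $D$ is about $2^{-\pi(\sqrt{\Delta_K}/3)}$, which tends to $0$. Dirichlet's theorem does not rescue you because the modulus of the progression grows with $D$ itself; producing infinitely many such $D$ this way is essentially as hard as well-known open problems. Worse, even if such $D$ existed, having all small primes inert is precisely the regime in which one expects (and in many cases can prove nothing better than conjecture) that the class number is $1$, so the nonprincipal ideal class you need to witness $N(\mathcal{I}) = S_K$ may not exist at all. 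Srinivasan sidesteps both obstacles by constructing the extremal ideal class explicitly from a Markoff triple rather than trying to force it into existence via congruence constraints.
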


As an immediate corollary of the above two results, we obtain:
\begin{corollary}
Let $K$ be a real quadratic field, let $\mathcal{I}\subseteq\mathcal{O}_K$ be an ideal in the ring of integers, and consider the ideal lattice $\Lambda_\mathcal{I}$.  Then
\begin{equation}\label{norm_bound}
N(\Lambda_\mathcal{I}) \leq \widehat{S}_K:=\frac{1}{\sqrt{\Delta_K}}\left(
1 + \left\lfloor
\frac{\sqrt{\Delta_K}}{3}
\right\rfloor
\right).
\end{equation}
\end{corollary}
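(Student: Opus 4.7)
The plan is to deduce this corollary immediately from the two results that precede it, namely Proposition \ref{mpd} and Theorem \ref{improve_mink}. There is essentially nothing to prove beyond threading together these two statements, so the write-up should be short.

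First I would recall from Proposition \ref{mpd} the identity
\begin{equation}
N(\Lambda_\mathcal{I}) \;=\; \min_{\substack{\mathcal{J}\subseteq\mathcal{O}_K \\ [\mathcal{J}]=[\mathcal{I}]^{-1}}} \frac{N(\mathcal{J})}{\sqrt{\Delta_K}},
\end{equation}
which shows in particular that $N(\Lambda_\mathcal{I})$ depends only on the inverse ideal class $[\mathcal{I}]^{-1}\in C_K$ and is controlled by the smallest norm of any integral representative of that class.

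Next I would invoke Theorem \ref{improve_mink} (Srinivasan's sharpening of Minkowski) applied to the ideal class $[\mathcal{I}]^{-1}$: it furnishes a representative $\mathcal{J}\in[\mathcal{I}]^{-1}$ with $\mathcal{J}\subseteq\mathcal{O}_K$ and $N(\mathcal{J})\leq S_K = 1+\lfloor\sqrt{\Delta_K}/3\rfloor$. Plugging this single representative into the minimum in the displayed identity gives
\begin{equation}
N(\Lambda_\mathcal{I}) \;\leq\; \frac{N(\mathcal{J})}{\sqrt{\Delta_K}} \;\leq\; \frac{1}{\sqrt{\Delta_K}}\left(1+\left\lfloor\frac{\sqrt{\Delta_K}}{3}\right\rfloor\right) \;=\; \widehat{S}_K,
\end{equation}
which is exactly the claim. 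Since both ingredients are already stated and available, there is no real obstacle; the only thing to be careful about is that Proposition \ref{mpd} is stated after scaling $\Lambda_\mathcal{I}$ so that $\vol(\Lambda_\mathcal{I})=1$, which is the same normalization implicit in the definition of $N(\Lambda)$ used throughout, so the bound as written refers to the normalized ideal lattice and requires no further adjustment.
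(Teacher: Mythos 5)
Your proposal is correct and matches the paper's intent exactly: the paper states this as ``an immediate corollary'' of Proposition \ref{mpd} and Theorem \ref{improve_mink} without writing out the argument, and your derivation is precisely the threading-together of those two results that the authors have in mind. The observation about applying Theorem \ref{improve_mink} to the class $[\mathcal{I}]^{-1}$, and the note about the $\vol(\Lambda_\mathcal{I})=1$ normalization, are both accurate.
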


The infinite family of ideals meeting the bound $S_K$ is of particular interest to us, since they produce an infinite family of ideal lattices which achieve the upper bound $\widehat{S}_K$.  Note that for $\Delta_K>5$ the bound $S_K$ is larger than $1$, so in particular all ideal classes meeting the bound $S_K$ are non-trivial, except the trivial class in $\Q(\sqrt{5})$.

Before delving into constructions of ideal lattices from Markoff numbers, we digress momentarily to prove the following theorem.  This result appears to at least be implicitly believed in the literature (see \cite{oggier_belfiore,oggier_belfiore_bayer}), but the authors know of no previous proof.  As the authors of \cite{oggier_belfiore_bayer} loosely conjectured, considering non-principal ideals does not improve the minimum product distance, at least in dimension $n = 2$.  
\begin{theorem}\label{best_mpd}
Among all ideal lattices $\Lambda_\mathcal{I}$ where $\mathcal{I}$ is an ideal of the ring of integers of a totally real quadratic number field, the one with maximal $N(\Lambda_\mathcal{I})$ is $\mathcal{I} = \mathcal{O}_K$ where $K = \Q(\sqrt{5})$.
\end{theorem}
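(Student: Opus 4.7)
The plan is to combine the ideal-class reformulation of Proposition~\ref{mpd} with Srinivasan's improved Minkowski bound (Theorem~\ref{improve_mink}), and then mop up a short list of small discriminants by hand. By Proposition~\ref{mpd} one has $N(\Lambda_\mathcal{I}) = M/\sqrt{\Delta_K}$, where $M$ denotes the smallest norm of an integral ideal in the inverse class $[\mathcal{I}]^{-1}$, and $M \geq 1$ with equality iff $\mathcal{I}$ is principal. This naturally splits the question into a principal and a non-principal case.

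The principal case is immediate: $N(\Lambda_\mathcal{I}) = 1/\sqrt{\Delta_K}$ is maximized over all real quadratic $K$ at the smallest real quadratic discriminant $\Delta_K = 5$, giving the target value $1/\sqrt{5}$, attained only at $K = \Q(\sqrt{5})$.

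For the non-principal case I would invoke Theorem~\ref{improve_mink} to get the uniform bound $N(\Lambda_\mathcal{I}) \leq \widehat{S}_K$, and determine when this already beats $1/\sqrt{5}$. Writing $k = \lfloor\sqrt{\Delta_K}/3\rfloor$, one has $\widehat{S}_K < 1/\sqrt{5}$ iff $\Delta_K > 5(k+1)^2$, and a brief interval-by-interval check over the ranges $9k^2 \leq \Delta_K < 9(k+1)^2$ shows this fails only on the finite set of real quadratic discriminants $\{5, 12, 13, 17, 37, 40, 41, 44\}$. Of these, $\Delta_K = 5$ is our extremal field, and all the remaining entries correspond to fields of class number one except $\Delta_K = 40$, i.e., $K = \Q(\sqrt{10})$; in the class-number-one cases every ideal is principal, so they are already absorbed by the principal case. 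For $K = \Q(\sqrt{10})$ the unique nontrivial ideal class is represented by the ramified prime $\mathfrak{p}_2$ above $2$, which is non-principal since $a^2 - 10 b^2 = \pm 2$ has no integer solutions (reduce modulo $5$); hence $M = 2$ and $N(\Lambda_{\mathfrak{p}_2}) = 2/\sqrt{40} = 1/\sqrt{10} < 1/\sqrt{5}$.

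The main obstacle is bookkeeping rather than ideas: Srinivasan's bound is sharp enough once $\Delta_K \geq 46$ and on every higher-$k$ interval, but it is too weak precisely on the short low-discriminant range listed above, so some finite enumeration is unavoidable. Fortunately, invoking class-number-one tables collapses most of the exceptional list back into the principal case, leaving only $\Q(\sqrt{10})$ to verify by hand. Assembling the pieces, $N(\Lambda_\mathcal{I}) \leq 1/\sqrt{5}$ throughout, with equality precisely when $\mathcal{I}$ is principal and $\Delta_K = 5$, i.e., exactly when $K = \Q(\sqrt{5})$ and $\mathcal{I}$ is equivalent to $\mathcal{O}_K$, as claimed.
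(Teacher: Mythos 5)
Your proposal follows the paper's overall strategy---invoke Srinivasan's improved Minkowski bound through $\widehat{S}_K$ to reduce to finitely many discriminants, then dispatch the small cases---but where the paper defers the finite check to a Sage computation, you make it fully explicit. Your interval-by-interval analysis over $9k^2 \leq \Delta_K < 9(k+1)^2$ correctly isolates the exceptional fundamental discriminants $\{5, 12, 13, 17, 37, 40, 41, 44\}$ (a sharper list than the paper's stated bound $D < 100$); the split into principal and non-principal cases via Proposition~\ref{mpd} then collapses all the class-number-one entries into the trivial case, leaving only $\Delta_K = 40$, which your quadratic-residue computation modulo $5$ handles cleanly by showing the ramified prime above $2$ is non-principal. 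This buys a self-contained, hand-verifiable proof at the mild cost of citing class-number tables for discriminants up to $44$. The argument is correct.
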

\begin{proof}
We have $N(\Lambda_{\mathcal{O}_{\Q(\sqrt{5})}}) = 1/\sqrt{5}$.  We have $\widehat{S}_K\geq1/\sqrt{5}$ for only finitely many values of $\Delta_K$, and hence only finitely many fields $K$.  One checks easily that all such $K$ are of the form $K = \Q(\sqrt{D})$ with $D<100$.  As the value of $N(\Lambda_\mathcal{I})$ depends only on the ideal class $[\mathcal{I}]\in C_K$, one only needs to check that $N(\Lambda_{\mathcal{I}})<1/\sqrt{5}$ for finitely many ideals.  This is easily done using Sage \cite{sagemath}.
\end{proof}

As is noted in \cite{oggier_belfiore}, this theorem is obvious if one restricts to principal ideal lattices, since in that case the question of maximizing $N(\Lambda_K)$ is equivalent to finding the real quadratic field with smallest discriminant.  Note the necessity of the improved bound $S_K$ in the above proof: applying the same argument with the classical Minkowski bound $M_K = \frac{\sqrt{\Delta_K}}{2}$, we would have $\frac{1}{\sqrt{\Delta_K}}M_K = \frac{1}{2}>\frac{1}{\sqrt{5}}$ for all $K$, and thus one cannot reduce the problem to checking finitely many ideal classes.

In broad terms, Theorem \ref{best_mpd} states that if one restricts their attention to well-rounded twists of ideal lattices, the lattice that maximizes $N$, namely $\Lambda_{\Q(\sqrt{5})}$, is not that which maximizes $\rho$.  Indeed, the hexagonal lattice is obtained as a well-rounded twist of $\Lambda_{\Q(\sqrt{3})}$.  In the next section we investigate the trade-off between these two quantities by studying the sphere-packing radii of four infinite families of lattices which, in a sense, have the maximal limiting value of $N$.

Lastly, we remark that we have not ruled out the possibility that some ideal lattice coming from an ideal in a non-maximal order has larger minimum product distance; however, this seems unlikely as increasing the conductor of an order increases the discriminant without any obvious benefit.

\subsection{Markoff Numbers and Markoff Ideals}

In \cite{srinivasan}, the author shows that ideal classes which attain the bound $S_K$ come from Markoff numbers.  As a general reference for Markoff numbers we refer to \cite[Chapter II]{cassels_diophantine}, which contains all of the fundamental definitions and results we will require.  So that our paper is self-contained, we review the basics below.

Given three non-negative integers $a$, $b$, and $c$, we call $(a,b,c)$ a \emph{Markoff triple} if the equation
\begin{equation}\label{market}
a^2 + b^2 + c^2 = 3abc
\end{equation}
is satisfied, and any $a$, $b$, or $c$ appearing in such a triple is a \emph{Markoff number}. The first few Markoff numbers are $c = 1, 2, 5, 13, 29, \ldots$ which appear in the solutions $(1,1,1)$, $(1,1,2)$, $(1,2,5)$, $(1,5,13)$, and $(2,5,29)$ to the Markoff equation (\ref{market}).  The \emph{Markoff Conjecture} asserts that given a solution to the above equation such that $a\leq b\leq c$, $c$ completely determines $a$ and $b$.  

It is well-known that there are infinitely many odd Markoff numbers, and any odd Markoff number $c$ must satisfy $c\equiv 1\pmod{4}$.  For simplicity, we will restrict to the case of odd $c$.  The following construction generalizes to even Markoff numbers, but the resulting ideal belongs to a quadratic order which is not necessarily maximal.  This construction follows \cite[Chapter II.4]{cassels_diophantine}.

Let $(a,b,c)$ be a Markoff triple with $c$ odd.  Let $D = 9c^2 - 4$ and let $K = \Q(\sqrt{D})$.  Clearly $D\equiv 1\pmod{4}$.  One constructs an ideal class whose minimal norm representative $\mathcal{I}\subseteq\mathcal{O}_K$ satisfies $N(\mathcal{I}) = S_K$ as follows.  Define integers $k$ and $\ell$ by
\begin{equation}
ak\equiv b\pmod{c},\ \ 0\leq k<c,\quad \text{and}\quad k^2+1 = \ell c
\end{equation}
Then the quadratic form
\begin{equation}
Q(X,Y) = cX^2 + (3c-2k)XY + (\ell-3k)Y^2
\end{equation}
has discriminant $\Delta_{Q} = 9c^2-4=D$, and so $Q$ defines an element of the class group $C_K$.  The minimal norm representative $\mathcal{I}_c\subseteq\mathcal{O}_K$ of this ideal class is given in its canonical basis as
\begin{equation}\label{markoff_basis}
\mathcal{I}_c = \left\{\begin{array}{cc}
(c,k-(c+1)/2+\omega) & \text{if } k\geq (c+1)/2 \\
(c,k+(c-1)/2+\omega) & \text{if } k<(c+1)/2
\end{array}
\right.
\end{equation}
One verifies that this is indeed the canonical basis of this ideal by checking the conditions of (\ref{ideal_can_basis}).  For example, if $c = 1$ then $D = 5$ and $k = 0$, and therefore we have $\mathcal{I}_1 = (1,\omega) = \mathcal{O}_K$ in the field $K = \Q(\sqrt{5})$.

For an odd Markoff number $c$, we will call the ideal $\mathcal{I}_c$ a \emph{Markoff ideal}.  Markoff ideals provide us with an infinite family of non-principal ideals with good minimum product distance.  More specifically, if one is willing to assume the Markoff Conjecture, then the ideal classes $[\mathcal{I}_c]$ all achieve the bound $S_K$ of Theorem \ref{improve_mink}.

\begin{theorem}[See \cite{srinivasan}, Theorem 3.2]\label{markoff_mpd}
Let $c$ be an odd Markoff number, let $K = \Q(\sqrt{D})$ with $D = 9c^2-4$, and let $\mathcal{I}_c$ be the corresponding Markoff ideal in $\mathcal{O}_K$.  Let $\mathcal{J}_c$ be any representative of the inverse class $[\mathcal{I}_c]^{-1}\in C_K$ and consider the ideal lattice $\Lambda_c = \Lambda_{\mathcal{J}_c}$.  Then $N(\Lambda_c) = \widehat{S}_K >1/3$ and $\lim_{c\rightarrow\infty}N(\Lambda_c) = 1/3$.

Furthermore, if the Markoff Conjecture is true, then $[\mathcal{I}_c]^{-1} = [\mathcal{I}_c]$ in $C_K$ and thus one can simply set $\Lambda_c = \Lambda_{\mathcal{I}_c}$.
\end{theorem}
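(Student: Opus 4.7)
The plan is to handle the three asymptotic claims---the norm equality, the inequality $\widehat{S}_K>1/3$, and the limit---by direct computation using Proposition \ref{mpd} together with the fact that $\mathcal{I}_c$ achieves Srinivasan's bound, and then to tackle the Markoff-Conjecture assertion separately via Galois conjugation, which is the main obstacle.

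First I would check that $\mathcal{I}_c$ meets the bound $S_K = c$. Reading off the canonical basis (\ref{markoff_basis}) of $\mathcal{I}_c$ gives $a = c$ and $d = 1$, so $N(\mathcal{I}_c) = ad = c$. On the other hand, $\Delta_K = 9c^2-4$ satisfies $3c-1 < \sqrt{\Delta_K} < 3c$ (the left inequality follows from $(3c-1)^2 = 9c^2-6c+1 < 9c^2-4$ for $c\geq 1$), hence $\lfloor \sqrt{\Delta_K}/3\rfloor = c-1$ and $S_K = c$. Since $[\mathcal{J}_c]^{-1} = [\mathcal{I}_c]$, Proposition \ref{mpd} then gives
\[
N(\Lambda_c) \;=\; \min_{[\mathcal{K}] = [\mathcal{I}_c]}\frac{N(\mathcal{K})}{\sqrt{\Delta_K}} \;=\; \frac{N(\mathcal{I}_c)}{\sqrt{\Delta_K}} \;=\; \frac{c}{\sqrt{9c^2-4}} \;=\; \widehat{S}_K.
\]
The inequality $\widehat{S}_K > 1/3$ reduces to the trivial $3c > \sqrt{9c^2-4}$, and the identity $\widehat{S}_K = 1/\sqrt{9-4/c^2} \to 1/3$ as $c\to\infty$ yields the limit claim.

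The main difficulty is showing $[\mathcal{I}_c]^{-1} = [\mathcal{I}_c]$ under the Markoff Conjecture. My approach is to pass through Galois conjugation: since $\mathcal{I}_c\cdot\sigma(\mathcal{I}_c) = (N(\mathcal{I}_c)) = (c)$ is principal, $[\sigma(\mathcal{I}_c)] = [\mathcal{I}_c]^{-1}$ in $C_K$, so it suffices to prove $\sigma(\mathcal{I}_c) \sim \mathcal{I}_c$. Under the classical correspondence between ideal classes of $\mathcal{O}_K$ and equivalence classes of primitive binary quadratic forms of discriminant $\Delta_K$, the class $[\sigma(\mathcal{I}_c)]$ is represented by the opposite of Srinivasan's Markoff form $Q$, obtained by negating its middle coefficient. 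The crucial step is to exhibit this opposite form as a Markoff form attached to a Markoff triple with the same largest entry $c$---this amounts to performing a Vieta-jump substitution $a\mapsto 3bc-a$ on $(a,b,c)$ and re-deriving the integers $k,\ell$ accordingly---whereupon Markoff Conjecture uniqueness of the triple with largest entry $c$ forces the two forms to be equivalent, giving $[\sigma(\mathcal{I}_c)] = [\mathcal{I}_c]$. Some care will be needed to reconcile narrow versus wide form equivalence, which in $\mathcal{O}_K$ may require twisting by a unit of norm $-1$ when such a unit exists.
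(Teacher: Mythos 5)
Your proof of the first part is essentially the paper's argument (both rely on Srinivasan), but you make a step explicit that contains a small gap. You verify by hand that $N(\mathcal{I}_c) = c = S_K$, and then immediately write
\[
N(\Lambda_c) \;=\; \min_{[\mathcal{K}]=[\mathcal{I}_c]} \frac{N(\mathcal{K})}{\sqrt{\Delta_K}} \;=\; \frac{N(\mathcal{I}_c)}{\sqrt{\Delta_K}}.
\]
That last equality silently assumes $\mathcal{I}_c$ is the \emph{least-norm} representative of its class, which does not follow from $N(\mathcal{I}_c) = S_K$ alone: Srinivasan's upper bound guarantees only that \emph{some} representative has norm $\leq S_K$, so a priori there could be one of strictly smaller norm. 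The minimality is precisely the content of Srinivasan's Theorem~3.2, which the paper cites and which you should cite explicitly at this step rather than treat as a by-product of your discriminant computation. With that citation inserted, the computations $S_K = c$, $\widehat{S}_K = 1/\sqrt{9-4/c^2}>1/3$, and the limit are all fine.

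For the ``Furthermore'' part the paper gives no proof at all (it simply invokes Srinivasan), so your sketch is genuine added content. The strategy is the right one: $\mathcal{I}_c\,\sigma(\mathcal{I}_c) = (c)$ is principal, so $[\sigma(\mathcal{I}_c)] = [\mathcal{I}_c]^{-1}$, and under the form--ideal correspondence $[\sigma(\mathcal{I}_c)]$ is represented by the opposite of the Markoff form $Q$. Showing the opposite form is again a Markoff form for the same largest entry $c$, and then invoking uniqueness under the Markoff Conjecture, is exactly how Srinivasan argues that $[\mathcal{I}_c]$ is $2$-torsion in $C_K$. You are right to flag the narrow-versus-wide equivalence issue: the form correspondence is naturally with the narrow class group, and one needs either a unit of norm $-1$ or an explicit change of orientation to descend to $C_K$. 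That detail would need to be spelled out for the argument to be complete, but as a proof sketch it matches the intended source.
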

\begin{proof}
By the main results of \cite{srinivasan} we have $N(\mathcal{I}_c) = S_K$ and therefore $N(\Lambda_c) = \widehat{S}_K > 1/3$ as claimed.  The statement about the limit is easy to show.
\end{proof}

In what follows, we explicitly assume the Markoff Conjecture.  We now define $\Lambda_c = \psi(\mathcal{I}_c)$ for any odd Markoff number $c$, and refer to this as a \emph{Markoff lattice}.  The family of Markoff lattices is therefore an infinite family of lattices such that $N(\Lambda_c) > 1/3$ and $\lim_{c\rightarrow\infty}N(\Lambda_c) = 1/3$.  Furthermore the results of \cite{srinivasan} essentially show that $1/3$ is the largest constant one could hope for from a family of ideal lattices.

\subsection{Well-Rounded Twists of Markoff Lattices}



While the Markoff lattices $\Lambda_c$ are an infinite family of lattices with the largest possible minimum product distance among ideal lattices, we must consider well-rounded twists of Markoff lattices to also guarantee good sphere-packing properties.  In what follows we assume that $c>1$, the case $c = 1$ being already well-understood since $\mathcal{I}_1 = \mathcal{O}_K$ where $K = \Q(\sqrt{5})$, and we have already seen that $\Lambda_1$ has exactly one well-rounded (orthogonal) twist.  

When $c > 1$, the algorithm of Theorem \ref{thm2} using the element $x = c$ and the basis $\{u,v\} = \{c,k + (c-1)/2+\omega\}$ extends the element $c$ to two good bases $B_i$ of the form
\begin{equation}\label{good_Markoff_bases}
B_i = \{c,\beta_ic+k+(c-1)/2+\omega\},\quad\text{where}\quad \beta_i = \left\lfloor\frac{-k\pm\sqrt{3c^2/2-1}}{c}\right\rfloor
\end{equation}
and we choose the negative sign for $\beta_1$.  Setting $\theta_i$ to be the minimal angle of the resulting well-rounded twist, we have
\begin{equation}\label{cos_good_Markoff_bases}
\cos\theta_i = \frac{(\beta_i^2 + \beta_i -1)c^2 + (2\beta_i+1)kc + k^2 + 1}{(2\beta_i+1)c^2 + 2kc}
\end{equation}
For some special Markoff triples, we can say more.  We define the Fibonacci numbers $F_n$ and the Pell numbers $P_n$ by the recurrence relations
\begin{equation}
F_1 = F_2 = 1,\ F_n = F_{n-1}+F_{n-2},\quad P_1 = 0,\ P_2 = 1,\ P_n = 2P_{n-1}+P_{n-2}.
\end{equation}

\begin{theorem}\label{fibonacci_lattices}
Suppose that $(1,b,c)$ is a Markoff triple with $c$ odd.  Then the good bases $B_i$ of (\ref{good_Markoff_bases}) which extend $c\in\mathcal{I}_c$ are given by
\begin{equation}
B_1 = \{c,k+(c-1)/2+\omega\},\quad B_2 = \{c,-2c+k+(c-1)/2+\omega\}
\end{equation}
and the corresponding minimal angles $\theta_i$ of the well-rounded twists satisfy
\begin{equation}
\cos\theta_1 = 0,\quad \lim_{c\rightarrow\infty}\cos\theta_2 = \frac{6-4\sqrt{5}}{11} = -0.2677\ldots
\end{equation}
Furthermore, there are infinitely many Markoff triples of the form $(1,b,c)$ where $c$ is odd.  Such triples can be given as $(1,F_{2n-1},F_{2n+1})$ where $F_{2n+1}$ is an odd and odd-indexed Fibonacci number.
\end{theorem}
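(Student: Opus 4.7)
The plan is to apply the algorithm of Theorem \ref{thm2} to the element $x = c\in\mathcal{I}_c$ with respect to the canonical basis $\{u,v\} = \{c,\,k+(c-1)/2+\omega\}$ of $\mathcal{I}_c$. For any Markoff triple $(1,b,c)$ the congruence $ak\equiv b\pmod c$ forces $k = b$, and for the Fibonacci family $(1,F_{2n-1},F_{2n+1})$ with $n\geq 2$ one verifies $k<(c+1)/2$, placing us in the second branch of (\ref{markoff_basis}). Since $x = 1\cdot u + 0\cdot v$, the sub-case $a=1$, $c'=0$, $d=1$ of the algorithm applies, so every basis of the form $\{c,y\}$ of $\mathcal{I}_c$ has $y = y_\beta := \beta c + k + (c-1)/2 + \omega$ for some $\beta\in\Z$, in agreement with the parameterization of (\ref{good_Markoff_bases}).

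The crux of the argument is a single algebraic identity. Expanding $N(y_\beta) = \gamma(\gamma+1) + N(\omega)$ with $\gamma = \beta c + k + (c-1)/2$, substituting $N(\omega) = (1-D)/4$ and $D = 9c^2-4$, and then eliminating $k^2$ via the Markoff relation $k^2 + 1 = 3kc - c^2$ yields the clean factorization
\begin{equation}
N(x) + N(y_\beta) \;=\; c(\beta+2)\bigl(2k + c(\beta-1)\bigr).
\end{equation}
From this, $\beta = -2$ immediately gives $N(x) + N(y_{-2}) = 0$, so by Proposition \ref{cos} the resulting twist is orthogonal ($\cos\theta = 0$); while $\beta = 0$ together with $Tr(x\bar{y}_0) = c(2k+c)$ gives $\cos\theta = 2(2k-c)/(2k+c)$. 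To confirm that $\beta\in\{-2,0\}$ are the unique integers in $J_1\cup J_2$, I would plug these values into the interval endpoints from the proof of Theorem \ref{thm2} and reduce the conditions to the inequalities $3c/10\leq k\leq 5c/6$ (for $0\in J_2$) and $k\leq 3c/2$ (for $-2\in J_1$), both of which follow from the explicit root $k = (3c-\sqrt{5c^2-4})/2$ of the Markoff equation combined with $c\geq 2$. Uniqueness within each interval follows because $J_1$ and $J_2$ each have width $1$ while their endpoints are irrational for $c>1$ odd (the radicand $2(3c^2-2)\equiv 2\pmod 4$ is never a perfect square).

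For the Fibonacci subfamily, the standard asymptotic $F_{2n-1}/F_{2n+1}\to 1/\varphi^2 = (3-\sqrt{5})/2$ plugged into $2(2k-c)/(2k+c)$ gives, after rationalizing, $\lim\cos\theta = (6-4\sqrt{5})/11$. To verify $(1,F_{2n-1},F_{2n+1})$ is in fact a Markoff triple, Cassini's identity supplies $F_{2n-1}F_{2n+1} = F_{2n}^2 + 1$, and expanding $F_{2n+1}^2 = (F_{2n-1}+F_{2n})^2$ combined with Cassini yields $F_{2n-1}^2 + F_{2n+1}^2 = 3F_{2n}^2 + 2$, hence $1 + F_{2n-1}^2 + F_{2n+1}^2 = 3(F_{2n}^2+1) = 3F_{2n-1}F_{2n+1}$. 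Infinitude of odd values of $F_{2n+1}$ is immediate from the mod-$2$ periodicity of Fibonacci (pattern $1,1,0$): $F_{2n+1}$ is odd precisely when $2n+1\not\equiv 0\pmod 3$, which holds for infinitely many $n$.

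The main obstacle is the key factorization above: the appearance of the root $\beta = -2$ is invisible before invoking the Markoff relation, and the precise shape $D = 9c^2-4$ of the discriminant is what drives the cancellation. Once this identity is in hand, the remainder is routine algebra together with standard Fibonacci material.
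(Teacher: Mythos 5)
Your proposal is correct, and its skeleton matches the paper's: you set $k=b$, reduce to the parameterization $y_\beta=\beta c+k+(c-1)/2+\omega$, identify $\beta\in\{-2,0\}$ via the intervals of Theorem~\ref{thm2}, and compute the asymptotics of $\cos\theta_2$ from $k/c\to(3-\sqrt5)/2$. The paper does exactly this, but much more tersely: it solves the Markoff equation for $b=(3c-\sqrt{5c^2-4})/2$, declares without comment that plugging into the floor formula~(\ref{good_Markoff_bases}) yields $\beta_1=-2$, $\beta_2=0$, and then reads $\cos\theta_i$ off of~(\ref{cos_good_Markoff_bases}). Where you genuinely add value is the factorization $N(x)+N(y_\beta)=c(\beta+2)\bigl(2k+c(\beta-1)\bigr)$, which makes the orthogonality at $\beta=-2$ structurally transparent rather than a lucky cancellation, and gives the clean form $\cos\theta_0=2(2k-c)/(2k+c)$. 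You also supply the interval-endpoint inequalities ($3c/10<k\leq5c/6$ and $k<3c/2$) and the irrationality argument ($6c^2-4\equiv2\pmod4$) that the paper silently assumes, and you give an explicit Cassini computation where the paper says only ``it is straightforward to verify.'' In short: same route, but yours is more explicit and isolates a reusable identity that explains why $\beta=-2$ is special; the paper's version is shorter because it leans entirely on the already-displayed formulas~(\ref{good_Markoff_bases}) and~(\ref{cos_good_Markoff_bases}).
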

\begin{proof}
Since $a = 1$, we have $k = b$.  Solving the Markoff equation $1 + b^2 + c^2 = 3bc$ for $b$ yields $b = (3c-\sqrt{5c^2-4})/2$, where we discard the other solution because $b\leq c$.  Computing the $\beta_i$ as in equation (\ref{good_Markoff_bases}) gives $\beta_1 = -2$ and $\beta_2 = 0$.  This shows that the given bases are of the stated form.

We compute $\cos\theta_i$ by plugging into (\ref{cos_good_Markoff_bases}).  For $\beta_1 = -2$ we obtain
\begin{equation}
\cos\theta_1 = \frac{1+b^2 + c^2 -3bc}{-3c^2 + 2bc} = 0
\end{equation}
as claimed.  For $\beta_2 = 0$ we obtain
\begin{equation}
\cos\theta_2 = \frac{4c^2 - 2c\sqrt{5c^2-4}}{4c^2-c\sqrt{5c^2-4}}\rightarrow\frac{4-2\sqrt{5}}{4-\sqrt{5}} = \frac{6-4\sqrt{5}}{11}
\end{equation}
as claimed.  Lastly, it is straightforward to verify that $(1,F_{2n-1},F_{2n+1})$ is a Markoff triple for any $n$, and that there are infinitely many odd and odd-indexed Fibonacci numbers.
\end{proof}

\begin{theorem}\label{pell_lattices}
Suppose that $(2,b,c)$ is a Markoff triple with $c$ odd.  Then the good bases $B_i$ of (\ref{good_Markoff_bases}) which extend $c\in\mathcal{I}_c$ are given by
\begin{equation}
B_1 = \{c,k+(c-1)/2+\omega\},\quad B_2 = \{c,-2c+k+(c-1)/2+\omega\}
\end{equation}
and the corresponding minimal angles $\theta_i$ of the well-rounded twists satisfy
\begin{equation}
\lim_{c\rightarrow\infty}\cos\theta_1 = \frac{3-\sqrt{2}}{7}=0.2265\ldots,\quad \lim_{c\rightarrow\infty}\cos\theta_2 = \frac{15-11\sqrt{2}}{17} = -0.0327\ldots
\end{equation}
Furthermore, there are infinitely many Markoff triples of the form $(2,b,c)$ where $c$ is odd.  Such triples can be given as $(1,P_{2n-2},P_{2n})$ where $P_{2n}$ is an even-indexed Pell number.
\end{theorem}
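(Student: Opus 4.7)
The plan is to mirror the proof of Theorem \ref{fibonacci_lattices} with $a = 2$ in place of $a = 1$. First, I would extract the shapes of $b$ and $k$ from the Markoff equation: solving $4 + b^2 + c^2 = 6bc$ subject to $b \leq c$ yields $b = 3c - 2\sqrt{2c^2-1}$. A parity check modulo $2$ (using that $c$ is odd forces $b$ odd via $b^2 \equiv c^2 \pmod 2$) shows $k = (b+c)/2 = 2c - \sqrt{2c^2-1}$. The existence of such a Markoff triple guarantees that $2c^2-1$ is a perfect square, so $k\in\Z$.

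Next, I would evaluate the $\beta_i$ in equation (\ref{good_Markoff_bases}). Since $k/c \to 2 - \sqrt{2}$ and $\sqrt{3c^2/2-1}/c \to \sqrt{6}/2$, one computes
\[
\frac{-k - \sqrt{3c^2/2-1}}{c} \longrightarrow \sqrt{2} - 2 - \frac{\sqrt{6}}{2} \approx -1.81, \qquad \frac{-k + \sqrt{3c^2/2-1}}{c} \longrightarrow \sqrt{2} - 2 + \frac{\sqrt{6}}{2} \approx 0.64,
\]
so $\beta_1 = -2$ and $\beta_2 = 0$ for all sufficiently large $c$. Substituting these values and $k = 2c-\sqrt{2c^2-1}$ into equation (\ref{cos_good_Markoff_bases}) and cancelling a factor of $c$ from numerator and denominator produces
\[
\cos\theta_1 = \frac{c - \sqrt{2c^2-1}}{c - 2\sqrt{2c^2-1}}, \qquad \cos\theta_2 = \frac{7c - 5\sqrt{2c^2-1}}{5c - 2\sqrt{2c^2-1}},
\]
whose limits as $c \to \infty$ are $(1-\sqrt{2})/(1-2\sqrt{2}) = (3-\sqrt{2})/7$ and $(7-5\sqrt{2})/(5-2\sqrt{2}) = (15-11\sqrt{2})/17$ after rationalising the denominator.

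For the infinitude claim, I would verify that $(2, P_{2n-2}, P_{2n})$ satisfies the Markoff equation (\ref{market}) by induction using the Pell recursion $P_n = 2P_{n-1} + P_{n-2}$. The key ingredient is the Pell identity $2P_{2n}^2 - 1 = (P_{2n-1}+P_{2n})^2$, which follows from a short induction and shows both that $P_{2n}$ extends to a valid Markoff triple with $a = 2$ (since this identity is precisely what makes $k = 2P_{2n} - (P_{2n-1}+P_{2n}) = P_{2n} - P_{2n-1}$ an integer) and that $2c^2-1$ is a square, as required above. A separate easy induction on the Pell recursion modulo $2$ shows $P_{2n}$ is always odd, so the construction from equation (\ref{markoff_basis}) applies.

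The main obstacle I anticipate is justifying the identification of $\beta_1 = -2$ and $\beta_2 = 0$ rigorously for \emph{all} admissible $c$ rather than only in the limit: strictly speaking, one must verify that the argument of the floor never lies within $o(1)$ of an integer. This can be handled by bounding $|k - (2-\sqrt{2})c|$ and $|\sqrt{3c^2/2-1} - c\sqrt{6}/2|$ by $O(1/c)$ explicitly, and checking the finitely many small cases ($c = 5, 29$) by hand.
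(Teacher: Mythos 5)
Your proposal is correct and mirrors the paper's proof in all essentials: solve the Markoff equation for $b$, determine $k$, evaluate the floors to get $\beta_1 = -2$, $\beta_2 = 0$, substitute into (\ref{cos_good_Markoff_bases}) and take limits, then produce the Pell family. Your derivation $k = (b+c)/2$ is a cleaner shortcut than the paper's congruence manipulation $2k \equiv -2\sqrt{2c^2-1}\pmod c$, and your explicit Pell identity $2P_{2n}^2 - 1 = (P_{2n-1}+P_{2n})^2$ neatly packages the verification that the paper leaves as ``not hard to verify.'' Incidentally you have also silently corrected two small slips in the published statement/proof: the Pell triples should read $(2,P_{2n-2},P_{2n})$ rather than $(1,P_{2n-2},P_{2n})$, and the paper's displayed expression for $\cos\theta_2$ is missing a factor of $c$ in the square-root terms, which your form $\frac{7c-5\sqrt{2c^2-1}}{5c-2\sqrt{2c^2-1}}$ fixes.

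On your flagged obstacle: the concern is legitimate in principle but straightforward to close here without needing $O(1/c)$ estimates or a finite case check. Writing the floor arguments as $-2 + \sqrt{2 - 1/c^2} \mp \sqrt{3/2 - 1/c^2}$, one needs only the crude inequalities $0 < \sqrt{2 - 1/c^2} - \sqrt{3/2 - 1/c^2} < 1$ (always true) and $2 \le \sqrt{2 - 1/c^2} + \sqrt{3/2 - 1/c^2} < 3$ (true for every integer $c \ge 2$, and all relevant $c$ satisfy $c \ge 29$), which pin down $\beta_1 = -2$, $\beta_2 = 0$ uniformly. The paper does not spell this out either, so noting it is a genuine (if minor) improvement in rigor.
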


\begin{proof}
The proof is as in the previous theorem.  We solve for $b$ using the Markoff equation $4 + b^2 + c^2 = 6bc$ to obtain $b = 3c - 2\sqrt{2c^2-1}$, discarding the other solution because $b\leq c$.  To solve for $k$, we note that the equation $ak\equiv b\pmod{c}$ becomes
\begin{equation}
2k\equiv 3c - 2\sqrt{2c^2-1} \equiv -2\sqrt{2c^2-1}\pmod{c}
\end{equation}
and hence $k\equiv -\sqrt{2c^2-1}\pmod{c}$.  The condition $0\leq k<c$ then forces $k = 2c - \sqrt{2c^2-1}$.  Using (\ref{good_Markoff_bases}) to compute the $\beta_i$ yields $\beta_1 = -2$ and $\beta_2 = 0$, and using (\ref{cos_good_Markoff_bases}) to compute the corresponding values of $\cos\theta_i$ yields
\begin{align}
\cos\theta_1 &= \frac{c^2 - c\sqrt{2c^2-1}}{c^2-2c\sqrt{2c^2-1}}\rightarrow\frac{3-\sqrt{2}}{7} \\
\cos\theta_2 &= \frac{7c^2 - 5\sqrt{2c^2-1}}{5c^2-2\sqrt{2c^2-1}}\rightarrow\frac{15-11\sqrt{2}}{17}
\end{align}
as claimed.  Lastly, it is not hard to verify that $(2,P_{2n-2},P_{2n})$ is a Markoff number for every $n$, and that all even-indexed Pell numbers are odd.
\end{proof}

Theorems \ref{fibonacci_lattices} and \ref{pell_lattices} thus each construct two infinite families $\{\Lambda_{c,1}\}$ and $\{\Lambda_{c,2}\}$, namely the well-rounded twists of $\Lambda_c$ for the appropriate Markoff number $c$, which satisfy (again, assuming the Markoff Conjecture)
\begin{equation}
\lim_{c\rightarrow\infty}N(\Lambda_{c,i}) = \frac{1}{3},\quad \lim_{c\rightarrow\infty}\cos\theta_i = \left\{
\begin{array}{cl}
0 & i = 1,\ c \in \mathcal{F}_{\text{odd}}\\ 
\frac{6-4\sqrt{5}}{11} & i = 2,\ c\in\mathcal{F}_{\text{odd}}\\
& \\
\frac{3-\sqrt{2}}{7} & i = 1,\ c\in \mathcal{P}_{\text{even}} \\
\frac{15-11\sqrt{2}}{17} & i = 2,\ c\in\mathcal{P}_{\text{even}}
\end{array}\right.
\end{equation}
where $\theta_i$ is the minimal angle of $\Lambda_{c,i}$, $\mathcal{F}_{\text{odd}}$ is the set of odd and odd-indexed Fibonacci numbers, and $\mathcal{P}_{\text{even}}$ is the set of even-indexed Pell numbers.  Among these four families, the largest limiting value of $|\cos\theta|$ is $|(6-4\sqrt{5})/11|$, hence this family offers the best sphere-packing.

To conclude, we mention that of course it is possible to use the algorithm outlined in Theorem \ref{thm2} to compute \emph{all} well-rounded twists of the lattice $\Lambda_c$, to potentially improve the sphere-packing radius even further.  However, an extensive computer search using this algorithm did not reveal any well-rounded twists of any $\Lambda_c$ such that $|\cos\theta|> |(6-4\sqrt{5})/11|$, though we cannot prove that such a well-rounded twist of a Markoff ideal lattice does not exist.  Hence we content ourselves with the above explicit constructions.

\section*{Acknowledgements}

The authors would like to thank Piermarco Milione, Hunter Brooks, Lenny Fukshansky, Camilla Hollanti, and Guillermo Mantilla-Soler for helpful discussions regarding the results of this article.  We would also like to thank the anonymous reviewer, whose comments greatly improved the readability and presentation of this article.

\section*{References}
\bibliographystyle{elsarticle-num.bst}
\bibliography{references.bib}

\end{document}